\newcommand{\noun}[1]{\textsc{#1}}
\numberwithin{equation}{section}
\numberwithin{figure}{section}
  \theoremstyle{definition}
  \newtheorem{defn}{\protect\definitionname}
\theoremstyle{plain}
\newtheorem{thm}{\protect\theoremname}
  \theoremstyle{remark}
  \newtheorem{rem}{\protect\remarkname}
 \theoremstyle{definition}
  \newtheorem{example}{\protect\examplename}
  \theoremstyle{plain}
  \newtheorem{cor}{\protect\corollaryname}
  \theoremstyle{plain}
  \newtheorem{lem}{\protect\lemmaname}
  \theoremstyle{plain}
  \newtheorem{fact}{\protect\factname}
  \providecommand{\definitionname}{Definition}
  \providecommand{\examplename}{Example}
  \providecommand{\factname}{Fact}
  \providecommand{\lemmaname}{Lemma}
  \providecommand{\remarkname}{Remark}
\providecommand{\corollaryname}{Corollary}
\providecommand{\theoremname}{Theorem}
\begin{document}

\title{\textmd{ON THE CONSERVATIVE PASTING LEMMA}}

\author{Pedro Teixeira}
\begin{abstract}
Several perturbation tools are established in the volume preserving
setting allowing for the pasting, extension, localized smoothing and
local linearization of vector fields. The pasting and the local linearization
hold in all classes of regularity ranging from $C^{1}$ to $C^{\infty}$
(Hölder included). For diffeomorphisms, a conservative linearized
version of Franks lemma is proved in the $C^{r,\alpha}$ ($r\in\mathbb{Z}^{+}$,
$0<\alpha<1$) and $C^{\infty}$ settings, the resulting diffeomorphism
having the same regularity as the original one.
\end{abstract}

\subjclass[2000]{37C10; 58G20.}

\keywords{pasting lemma, $C^{r}$ perturbation, divergence-free vector field,
localized smoothing, local linearization, divergence equation, volume
preserving diffeomorphism, Franks lemma.}
\maketitle

\section{Introduction}

\hypertarget{se1.1}{}

\subsection{Continuous-time dynamics}

One of the basic problems in conservative continuous-time dynamics
is the following:

\emph{How may a local $C^{r}$-perturbation of a divergence-free vector
field be extended to a global one?}

More precisely (and always in the conservative setting), given a $C^{r}$
vector field $X$ on a closed connected manifold $M$ and a $C^{r}$-perturbation
$Y$ of the restriction of $X$ to an open set $U$, is it possible
to find a $C^{r}$-perturbation $Z$ of $X$ that still coincides
with $Y$ in a slightly smaller set, say in any chosen compact set
$K\subset U$? In the non-conservative context the solution is trivial,
$Y$ can be glued with $X$ using a suitable partition of unity, i.e.
we let $\widehat{Z}=\xi Y+(1-\xi)X$ in $U$ and $\widehat{Z}=X$
in $U^{c}$ where the smooth function $\xi$ equals $1$ in a neighbourhood
of $K$ and $0$ in neighbourhood of $U^{c}$. Clearly $\widehat{Z}$
is $C^{r}$-close to $X$ if $Y$ is $C^{r}$-close to $X$ in $U$
and the problem is solved. 

In the conservative setting the situation is more delicate, for $\widehat{Z}$
constructed as above fails in general to be divergence-free in the
transition ``annulus'' $\varOmega$ i.e. in the set where $0<\xi<1$.
One obvious way to tackle this difficulty is trying to find a $C^{r}$
vector field $v$ supported in $\overline{\varOmega}$ whose divergence
equals that of $\widehat{Z}$ and then set $Z=\widehat{Z}-v$, thus
canceling the divergence. Provided $v$ can be found $C^{r}$-small
if $Y-X|_{U}$ is $C^{r}$-small, the question is solved.

The problem is that, in the conservative setting, an obstruction of
topological nature may hinder the above procedure: the interplay between
the divergence theorem and connected cobordism. To simplify the explanation,
all manifolds referred to below are assumed to be compact, connected,
orientable and smooth (besides second countable and Hausdorff). Let
$M$, $U$ and $K$ be as above. We start by observing that $K$ may
contain a closed $(n-1)$-submanifold $\gamma$ which is the boundary
of \emph{no} $n$-submanifold contained in $U$. In this case, the
perturbation $Y$ of the restriction of $X$ to $U$ may change the
original flux across $\gamma$ (see Example \hyperlink{Ex1}{1} below).
But simultaneously, there might exist another closed $(n-1)$-submanifold
$\gamma'$, now contained in $U^{c}$, that together with $\gamma$
constitutes the boundary of an $n$-submanifold $W$. Note that the
divergence canceling procedure described above assures that $Z=X$
in $U^{c}$, thus the original flux of $X$ across $\gamma'$ is kept
unchanged in $Z$. As a consequence, the flux of $Z$ across the cobordant
manifolds $\gamma$ and $\gamma'$ will be distinct, thus implying
(by the divergence theorem) that the divergence of $Z$ cannot identically
vanish inside the manifold $W$ bounded by $\gamma$ and $\gamma'$.
Therefore, there is no possibility of extending $Y|_{K}$ in a divergence-free
way to the whole $M$ so that the resulting vector field still coincides
with $X$ in $U^{c}$. At first glance, one may think that the above
obstruction might be overcome if one can find an alternative method
for the construction of the extension $Z$ of $Y|_{K}$ that renounces
to obtain $Z=X$ in $U^{c}$. 

Even so, the answer may still be negative. Indeed, the desired divergence-free
extension of $Y|_{K}$ might simply not exist at all (see Example
\hyperlink{Ex2}{2} below). Note that while by hypothesis, $\gamma$
is the boundary of \emph{no} $n$-submanifold contained in $U$, it
may still be the boundary of an $n$-submanifold $W$ \emph{not }contained
in $U$ (i.e. $\gamma$ may be null-cobordant in $M$). Now, by the
divergence theorem, the flux of the original vector field $X$ across
$\gamma$ is zero, but the divergence-free $C^{r}$-perturbation $Y$
of the restriction of $X$ to $U$ may change this flux to a non zero
value. But then, no $C^{1}$ extension of $Y|_{K}$ to the whole $M$
can have a divergence that identically vanishes inside $W$. 

These obstructions can be removed at once if we make a simple and
natural topological assumption, namely that $U\setminus K$ is connected.
This implies the existence of a compact $n$-submanifold $P$ with
smooth connected boundary such that $K\subset\text{int}\,P$ and $P\subset U$
(Lemma \hyperlink{l3}{3}), which is the key to the construction of
the pasting of $Y$ and $X$ by the procedure described above. This
pasting result (Theorem \hyperlink{th1}{1}), which can also be formulated
in the Hölder setting (Theorem \hyperlink{th3}{3}), is then briefly
the following:\vspace{0.2cm}

\noindent (Conservative $C^{r}$ Pasting Lemma). \emph{Let $M$ be
a closed connected manifold, $U\subsetneq M$ an open neighbourhood
of a compact set $K$ such that $U\setminus K$ is connected and $r\in\mathbb{Z}^{+}$.
In the conservative setting, given any $C^{r}$ vector field $X$
on $M$ and any $C^{r}$-perturbation $Y$ of the restriction of $X$
to $U$, there exists a $C^{r}$-perturbation $Z$ of $X$ that coincides
with $Y$ in a neighbourhood of $K$ and with $X$ in $U^{c}$. }\vspace{0.2cm}

Theorem \hyperlink{th1}{1} also shows that vector field $Z$ can
be obtained so that the $C^{r}$ norm of $Z-X$ is linearly bounded
by that of $Y-X|_{U}$, for some fixed constant $C>1$ depending only
on $r$, $K$ and $U$ (and, of course, on the manifold's atlas, which
is assumed to be fixed).

The proof is constructive, elementary and self contained. It\textbf{
}essentially relies on a simple but ingenious global-to-local reduction
procedure originally due to Moser \cite{MO}. Besides its simplicity,
the main advantage of Moser's direct approach is the guaranty that
the auxiliary divergence-canceling vector field $v$ satisfying $\mbox{div }v=\mbox{div }\widehat{Z}$
will be (compactly) supported inside the open ``transition annulus''
$\varOmega\subset U\setminus K$ (the set where the transition from
vector field $Y$ to $X$ is set to take place; in practice, it will
correspond to a small neighbourhood of the closure of $\{x\in M:\,0<\xi(x)<1\}$),
and thus extends by $0$ to the whole $M$ (in the $C^{r}$ class).
This is needed to guarantee that the divergence canceling operation
$\widehat{Z}-v$ producing $Z$ does not change $\widehat{Z}$ outside
$\varOmega$, so that $Z$ still coincides with $Y$ and $X$ in $K$
and $U^{c}$, respectively. Due to the linearity of the divergence
operator, the use of optimal regularity tools of Dacorogna-Moser type
\cite[Theorem 2]{DM} (which are crucial in the discrete-time case,
see Sections \hyperlink{se1.2}{1.2} and \hyperlink{se4}{4}) can
be entirely avoided, as there is no regularity loss in the divergence
of the initial (non-conservative) pasting perturbation: if $X$ and
$Y$ are divergence-free $C^{r}$ vector fields and $\widehat{Z}$
is a vector field defined as above, then $\mbox{div }\widehat{Z}$
is still of class $C^{r}$ and $C^{r}$ small if $Y$ is $C^{r}$
close to $X$.

This conservative pasting lemma permits to establish several perturbation
tools of which three illustrative examples are singled out:
\begin{enumerate}
\item Localized smoothing (Theorem \hyperlink{th5}{5}): at least for certain
useful open sets $\varOmega\subset M$ (see Footnote 3),\emph{ one
may conservatively $C^{r}$ perturb a divergence-free vector field
$X$ in order to make it smooth inside $\varOmega$, while keeping
$X$ unchanged on the complement of $\varOmega$}.
\item Perturbative extension with increased regularity (Corollary \hyperlink{C1}{1}):\emph{
if a $C^{r}$-perturbation $Y$ of the restriction of $X$ to $U$
is of class $C^{s}$, $s>r$, $C^{r}$ being the regularity of $X$,
then $Y|_{K}$ can be (conservatively) extended to a $C^{r}$-perturbation
of $X$ which is of class $C^{s}$ on the whole $M$.}
\item Local linearization of ``Franks lemma type'' (Theorem \hyperlink{th6}{6}):
\emph{one may conservatively $C^{1}$-perturb a vector field $v$
near a point $x$ (keeping $v(x)$ unaltered), in order to change
its derivative at $x$ and make $v$ affine linear near this point,
the allowed variation $\delta$ of the derivative depending linearly
on the required $C^{1}$-closeness $\epsilon$ of the resulting vector
field to $v$ }(this result requires the use of an additional homothety
trick).
\end{enumerate}
Other examples could be given, however the primary intention of this
work is to present a few solid basic techniques that might serve as
a starting point for the development of more sophisticated conservative
tools. Special care has been taken to ensure that:\smallskip{}

(a) The results obtained are the best possible both in terms of the
regularity of the resulting vector field or diffeomorphism as in terms
of the regularity of the closeness of the resulting system to the
original one. In the case of volume preserving diffeomorphisms (see
Sections \hyperlink{se1.2}{1.2} and \hyperlink{se4}{4}) this endeavour
is restricted by the limits of the present knowledge concerning the
existence of optimal regularity solutions to the prescribed Jacobian
PDE (which is an open problem in the $C^{r}$ case, $r\in\mathbb{Z}^{+}$
\cite[p.192]{CDK}). 

\smallskip{}

(b) The linear dependence of $\delta$ on $\epsilon$ is established
in all perturbation results (with the exception of Theorems \hyperlink{th4}{4}
and \hyperlink{th5}{5} where this is meaningless). Obtaining this
dependence is often crucial in applications.

\smallskip{}

(c) The proofs presented are constructive whenever possible and complete
or at least easily completable following the indications in the text.\smallskip{}

The pasting technique for divergence-free vector fields was introduced
by Arbieto and Matheus in \cite{AM}. It is known, however, that the
statements and proofs of the main tools (\cite[Section 3.1]{AM})
are not quite correct (concerning the statements, see Warnings \hyperlink{W1}{1}
and  \hyperlink{W2}{2} below). Some of the problems have been identified
in \cite{AM2}, but we are unaware of any reference correctly stating
this kind of results and providing sound proofs. The writing of the
present work was partially stimulated by the author's encounter with
that paper.

\hypertarget{se1.2}{}

\subsection{Discrete-time dynamics}

We now turn to the case of volume-preserving diffeomorphisms. To establish
in this setting a $C^{r}$-perturbation pasting lemma analogue to
Theorem \hyperlink{th1}{1} seems beyond the techniques presently
available (see (a) above and Section \hyperlink{se4}{4}), the main
difficulty being that volume correcting $C^{r}$ diffeomorphism (playing
the analogue role to the divergence-canceling vector field $v$ in
Section \hyperlink{se1.1}{1.1}) must now be reconstructed from a
determinant which is only of class $C^{r-1}$ and  $C^{r-1}$-close
to $1$. Nevertheless, using optimal regularity tools with control
of support, such result can actually be established in the Hölder
setting, but special care must be taken due to the pathological continuity
behaviour of the composition and inversion operators in these functional
spaces. This result will be presented in a separate note \cite{TE2}.
Here, we shall restrict to establish a quite general conservative
linearized version of Franks lemma, an important feature being that
the resulting diffeomorphism will have the same $C^{r,\alpha}$ regularity
as the original one $(r\in\mathbb{Z}^{+}$, $0<\alpha<1$). As it
is well known, to achieve the local affine linearization (and not
merely the perturbation of the derivative) is often essential to guarantee
the control of the dynamics near the perturbed fixed point or periodic
orbit, specially when the perturbed derivative is non-hyperbolic,
as it was already evident in the original paper \cite{FR}. Another
important aspect as far as applications are concerned is to establish
the linear dependence of the permitted variation $\delta$ of the
derivative in terms of the required $C^{1}$-closeness $\epsilon$
to the original diffeomorphism. As in \cite{FR}, this linear dependence
is also established in Theorem \hyperlink{th8}{8}. It is interesting
to compare the later result both with (a) the original Franks lemma
and with (b) the corresponding result for vector fields (Theorem \hyperlink{th6}{6}).
In all the three results, the resulting diffeomorphism or vector field
has the same regularity as the original one and the linear dependence
of $\delta$ on $\epsilon$ is established, but while (a) and (b)
are quite elementary, the Hölder case of Theorem \hyperlink{th8}{8}
requires the use of optimal regularity tools with control of support
and has much deeper roots, ultimately relying on the elliptical regularity
solutions to the Poisson problem with Neumann boundary condition and
the corresponding Schauder estimates (see \cite{DM}). The solution
in the $C^{\infty}$ case is simpler, relying on Moser's elegant yet
powerful flow method. In both cases, the starting point is a homothety
trick that proved crucial in establishing Avila's regularization \cite{AV}.
Note, however, that the results in \cite{DM} cannot be directly applied
in the present context, due to their lack of control of support (see
(ii) below). One uses instead their counterparts in \cite{TE} where
this control is achieved (the proofs of the later results follow closely
the original ones in \cite{DM}). In the dynamical systems literature,
Dacorogna-Moser's powerful theorems have been often misinterpreted
and naively applied in several ways. As these flaws are somewhat recurrent,
it is perhaps not out of place to call here attention to them:

(i) In \cite{DM} it is necessary to assume that the domain $\varOmega$
is \emph{connected }(besides bounded). This was omitted by lapse in
the statements of the propositions, but it is explicitly assumed at
the beginning of page 2.

(ii) In \cite[Theorem 1']{DM}, the solution diffeomorphism $\varphi$
in general does \emph{not} extend by the identity to the whole $\mathbb{R}^{n}$
in the $C^{k+1,\alpha}$ class, not even when the determinant $f$
equals 1 in a neighbourhood of $\partial\varOmega$. For instance,
in order to guarantee that a volume correcting diffeomorphism acts
only inside the region $\varOmega$ where the volume distortion takes
place (i.e. that $\text{supp}(\varphi-\text{Id})\subset\overline{\varOmega})$
one needs instead the corresponding results with control of support
as in \cite{TE}. Analogue observation holds for the linearized problem
$\text{div}\,u=h$.

(iii) The optimal regularity statements in \cite{DM} and \cite{TE}
with $C^{k,\alpha}$ replaced by $C^{k}$, $k\in\mathbb{Z}^{+}$,
have \emph{not }been established in dimension higher than one (being
false for $k=0$ \cite[p.192 and 180]{CDK}).

(iv) Concerning the regularity of the solution diffeomorphism $\varphi$
in \cite[Theorems 1']{DM} when the determinant $f$ is $C^{\infty}$
see part (B) in the proof of Lemma \hyperlink{l2}{2} below.

\hypertarget{se2}{}

\section{Conservative pasting, extension, localized smoothing and local linearization
of vector fields}

\noindent \textbf{Convention. }Throughout this paper, $M$ \emph{is
a (second countable, Hausdorff)} \emph{connected orientable closed}
$C^{\infty}$ \emph{manifold of dimension} $n\geq2$, \emph{equipped
with a finite atlas $(V_{i},\,\phi_{i})_{i\leq m}$ and a} $C^{\infty}$
\emph{volume form} $\omega$. By \cite{MO},\footnote{As remarked in \cite[p.4 and 23]{DM}, the proof given in \cite[Lemma 2]{MO}
is actually for that proposition with both the hypothesis $\text{supp(}g-h)\subset Q$
and the conclusion $\text{supp}(u-\text{Id})\subset Q$ removed ($g,\,h$
being the restrictions to the open $n$-cube $Q$ of two smooth volume
forms defined on $\overline{Q}$ and having the same total volume,
the proof produces a smooth diffeomorphism $u$ realizing a pullback
between them).} we can assume that the atlas is \emph{conservative}, i.e. on each
local chart, $\omega$ pushes forward to the canonical volume form
on $\mathbb{R}^{n}$ and $\phi_{i}(V_{i})=\lambda\mathbb{B}^{n}$,
for some constant $\lambda>0$; $\mu$ is the Lebesgue measure induced
by $\omega$ on $M$. We may further assume that the atlas is \emph{regular
}in the sense that there is a ``larger'' conservative atlas $(W_{i},\varPhi_{i})_{i\leq m}$
such that $\overline{V_{i}}\subset W_{i}$ and $\varPhi_{i}|_{V_{i}}=\phi_{i}$.
As usual, $\mathbb{B}^{n}$ is the (open unit) $n$-ball in Euclidean
space and $\mathbb{D}^{n}=\overline{\mathbb{B}^{n}}$ is the $n$-disk.
\medskip{}

Given an open set $U\subset M$, denote by $\mathfrak{X}^{s}(U)$,
$s\in\mathbb{Z}^{+}\cup\{\infty\}$, the space of\emph{ }vector fields
of class $C^{s}$ defined on \emph{$U$} and by $\mathfrak{X}_{\mu}^{s}(U)$
the subspace of those that are divergence-free in relation to $\omega$,
or equivalently, whose flows preserve $\mu$. As mentioned in the
Introduction, in Theorem \hyperlink{th1}{1} we consider vector fields
$Y$ defined on open sets $U\subset M$, which are $C^{r}$-perturbations
of $X|_{U}$, $X$ being a vector field in $\mathfrak{X}_{\mu}^{s}(M)$.
To guarantee that the $C^{r}$ norms of these $Y$ remain finite,
we introduce the following

\hypertarget{def1}{}
\begin{defn}
(\emph{$C^{r}$}-bounded) Let $r,\,s\in\mathbb{Z}^{+}\cup\{\infty\}$,
$r\leq s$. $Y\in\mathfrak{X}^{s}(U)$ is $C^{r}$\emph{-bounded}
if $Y$ and all its derivatives up to order $r$ are bounded on $U$.
$\left\Vert \cdot\right\Vert _{C^{r};U}$ is Whitney $C^{r}$ norm
($\mathbb{N}_{0}\ni r\leq s)$ on $\mathfrak{X}^{s}(U)$ (Section
\hyperlink{se5.1}{5.1}). When $U=M$ we simply write $\left\Vert \cdot\right\Vert _{C^{r}}$.

\medskip{}
\end{defn}
We recall the informal description of Theorem \hyperlink{th1}{1}.
\emph{In the volume preserving setting}, let $X$ be a vector field
of class $C^{r}$ on a closed manifold $M$ and $U\subsetneq M$ an
open neighbourhood of a compact set $K$. Given a $C^{r}$ perturbation
$Y$ of the restriction of $X$ to $U$, it is possible (provided
$U\setminus K$ is connected), to $C^{r}$-perturb $X$ inside $U$
only, so that the resulting vector field on $M$ still coincides with
$Y$ in some neighbourhood of $K$. One interesting point is that
the perturbation can be made $C^{\infty}$ in the open set where the
control over the dynamics is necessarily lost, i.e. on the ``transition
annulus'' where the conservative ``harmonization'' of the two original
vector fields takes place (this being the unavoidable cost of bringing
together in the same vector field two more or less ``conflicting''
dynamics). 

\hypertarget{th1}{}
\begin{thm}
\emph{($C^{s}$ conservative pasting with $C^{r}$-closeness)}. Let
$M$ be a manifold as above. Suppose that $K$ is a compact subset
with an open neighbourhood $U\subsetneq M$ such that $U\setminus K$
is connected. Then, given \textup{$s\in\mathbb{Z}^{+}\cup\{\infty\}$}
and an integer $1\leq r\leq s$, there is an open set $K\subset V\subset U$
and a constant $C=C(r,K,U)>1$ such that: given $X\in\mathfrak{X}_{\mu}^{s}(M)$
and a $C^{r}$-bounded $Y\in\mathfrak{X}_{\mu}^{s}(U)$, there exists
$Z\in\mathfrak{X}_{\mu}^{s}(M)$ satisfying:

\begin{enumerate}
\item \hypertarget{1}{}$Z=Y$ in $V$;
\item \hypertarget{2}{}$Z=X$ in a neighbourhood of $U^{c}$;
\item \hypertarget{3}{}$\left\Vert Z-X\right\Vert _{C^{r}}\leq C\left\Vert Y-X\right\Vert _{C^{r};U}$
\end{enumerate}
\noindent Moreover, $V$ depends only on $K$ and $U$ and not on
$r,\,s$ and one may further require $Z$ to be $C^{\infty}$ at every
point where it neither coincides with $X$ nor with $Y$.

\end{thm}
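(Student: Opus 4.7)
The plan is to implement the global-to-local strategy outlined in the introduction: naively paste $X$ and $Y$ with a smooth cut-off, observe that the resulting divergence is $C^{r}$-small with \emph{no} loss of regularity, then subtract a compactly supported divergence-canceling vector field produced by Moser's elementary construction from \cite{MO}.

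First I would invoke the topological lemma (Lemma 3 of the introduction) to extract from the connectedness of $U\setminus K$ a compact $n$-submanifold $P\subset U$ with smooth \emph{connected} boundary satisfying $K\subset\mathrm{int}\,P$. Then I would fix nested smooth open sets $K\subset V\subset\overline{V}\subset V'\subset\overline{V'}\subset\mathrm{int}\,P$, depending only on $K$ and $U$ (hence on neither $r$ nor $s$), together with a $C^{\infty}$ function $\xi\colon M\to[0,1]$ equal to $1$ on a neighbourhood of $\overline{V}$ and to $0$ off $V'$. The candidate non-conservative paste is $\widehat{Z}:=\xi Y+(1-\xi)X$ on $U$, extended by $X$ outside; by construction $\widehat{Z}\in\mathfrak{X}^{s}(M)$, $\widehat{Z}=Y$ on $V$, $\widehat{Z}=X$ off $V'$ (a neighbourhood of $U^{c}$), and $\|\widehat{Z}-X\|_{C^{r}}\leq C_{1}\|Y-X\|_{C^{r};U}$ with $C_{1}=C_{1}(\xi,r)$.

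Using $\mathrm{div}\,X=\mathrm{div}\,Y=0$ and the Leibniz identity $\mathrm{div}(fW)=f\,\mathrm{div}\,W+\langle\nabla f,W\rangle$, one computes $h:=\mathrm{div}\,\widehat{Z}=\langle\nabla\xi,Y-X\rangle$: a function of class $C^{s}$ supported in the transition annulus $\overline{\Omega}$, $\Omega:=V'\setminus\overline{V}$, with $\|h\|_{C^{r}}\leq C_{2}\|Y-X\|_{C^{r};U}$. Since no derivative of $Y-X$ appears in $h$, no regularity is lost, which is exactly why Dacorogna--Moser optimal-regularity tools can be avoided here. The divergence theorem on $P$ yields the necessary compatibility $\int_{P}h\,d\mu=\int_{\partial P}X\cdot\nu\,dS=\int_{P}\mathrm{div}\,X\,d\mu=0$. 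The core step is then the construction of $v\in\mathfrak{X}^{s}(M)$ with $\mathrm{div}\,v=h$, $\mathrm{supp}(v)\subset\overline{P\setminus V}$ and $\|v\|_{C^{r}}\leq C_{3}\|h\|_{C^{r}}$: I would cover $\overline{P\setminus V}$ by finitely many conservative charts of the fixed atlas, pick a $C^{\infty}$ subordinate partition of unity, and split $h=\sum_{j}h_{j}$; then, using that $P\setminus V$ is connected (a direct consequence of $\partial P$ being connected together with the nesting $V\subset\mathrm{int}\,P$), the local integrals are redistributed through chart overlaps by adding and subtracting fixed $C^{\infty}$ bumps of prescribed mass, producing $\widetilde{h}_{j}$ each supported in a single chart cube with zero integral and $\sum\widetilde{h}_{j}=h$. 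On each cube, the elementary iterated-anti-differentiation formula from \cite{MO} furnishes a compactly supported $v_{j}$ in the same regularity class as $\widetilde{h}_{j}$, with linear $C^{r}$-control. Summing gives $v$, and $Z:=\widehat{Z}-v$ satisfies (1)--(3) by construction and the triangle inequality, with $C=C_{1}+C_{2}C_{3}$.

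The optional $C^{\infty}$ refinement in the last sentence I would obtain by an additional device: on a sub-annulus strictly inside $\Omega$, replace $\widehat{Z}$ by a fixed $C^{\infty}$ divergence-free reference field and correct the new divergence by the same scheme, so that at every point where $Z$ genuinely differs from both $X$ and $Y$ all data entering the local Moser formulas are $C^{\infty}$. The principal obstacle lies in the third paragraph: producing $v$ with \emph{support in} $\overline{P\setminus V}$, \emph{full regularity} $C^{s}$, and a \emph{linear} $C^{r}$-bound whose constant depends only on $r$, $K$, $U$. It is precisely at the zero-integral splitting $h=\sum\widetilde{h}_{j}$ that the connectedness of $U\setminus K$ is used; without it, the cobordism-type obstruction illustrated by the examples in the introduction would reappear and no such $v$ could exist.
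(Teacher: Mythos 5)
Your construction of $Z$ satisfying (1)--(3) is essentially the paper's own proof: Lemma \hyperlink{l3}{3} produces the submanifold $P$ with connected boundary, the naive paste $\widehat{Z}=\xi Y+(1-\xi)X$ has divergence $h=\sum_i(\partial_i\xi)(Y^i-X^i)$ with no regularity loss, the compatibility $\int h\,\omega=0$ comes from the divergence theorem, and the divergence-canceling field $v$ is built by exactly the Moser/Csat\'o--Dacorogna--Kneuss global-to-local reduction (zero-mean splitting of $h$ over a connected chain of cubes, local solutions by iterated antidifferentiation, linear $C^{r}$ bounds throughout). Two small points of care that the paper makes explicit and you should too: the annulus carrying $v$ must be \emph{chosen} connected (the paper takes collar-shrunk copies $P_{1}\subset\cdots\subset P_{4}=P$ so that the annuli are diffeomorphic to $\partial P\times\,]0,1[$; an arbitrary nested smooth $V\subset\mathrm{int}\,P$ does not automatically give a connected $P\setminus V$), and the constant must be defined with enough slack (the paper sets $C=n^{(r+1)/2}C'+1$) so that the final smoothing correction can be absorbed into (3).

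The genuine gap is in your treatment of the final clause (``$Z$ is $C^{\infty}$ at every point where it coincides with neither $X$ nor $Y$''). Interposing a single smooth divergence-free reference field $W$ on a sub-annulus and correcting the divergence once cannot work: the transitions from $Y$ to $W$ and from $W$ to $X$ are effected by cutoffs, and on the open sets where those cutoffs are strictly between $0$ and $1$ the resulting field is a nontrivial combination of $W$ with $Y$ (respectively $X$), hence only of class $C^{s}$ --- yet these are precisely points where $Z$ coincides with neither $X$ nor $Y$, so the required smoothness fails exactly where it is demanded. Any finite number of such interpositions reproduces the same problem at the outermost transition layers. The paper resolves this with Theorem \hyperlink{th5}{5} (conservative localized smoothing), applied a posteriori to $Z_{0}=\widehat{Z}-v$ on the open collar $\varOmega$: one takes an \emph{infinite} sequence of overlapping annuli $\varLambda_{k}$ accumulating on both boundary components of $\varOmega$, successively replaces the field by smooth divergence-free approximations $X_{k}$ on each $\varLambda_{k}$, cancels the divergence in each overlap $\varOmega_{k}$, and controls the telescoping corrections via the linearity of the solution operator $\varPhi:h\mapsto v$ so that the $Z_{k}$ form a Cauchy sequence in $\mathfrak{X}_{\mu}^{r}(M)$ whose limit is genuinely $C^{\infty}$ on all of $\varOmega$ and untouched outside it. Without this iterative scheme (or an equivalent one) the ``Moreover'' part of the statement is not established.
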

Actually the proof establishes a considerably more precise result
(as usual, $Y\not\equiv X|_{U}$ means that $Y(x)\neq X(x)$ for some
point $x\in U$): 

\hypertarget{th2}{}
\begin{thm}
\noindent Let M, $K$, $U$, $r$ and $s$ be as above. Then, there
is a constant $C=C(r,K,U)>1$ and two disjoint compact $n$-submanifolds
$Q$ and $S$ with smoothly diffeomorphic connected boundaries for
which $K\subset\text{\emph{int}}\,Q$ and $U^{c}\subset\text{\emph{int}\,}S$
and such that: given $X\in\mathfrak{X}_{\mu}^{s}(M)$ and a $C^{r}$-bounded
$Y\in\mathfrak{X}_{\mu}^{s}(U)$ such that $Y\not\equiv X|_{U}$,
there exists $Z\in\mathfrak{X}_{\mu}^{s}(M)$ satisfying:
\begin{enumerate}
\item $Z=Y$ in $Q$;
\item $Z=X$ in $S$;
\item $Z$ is $C^{\infty}$ in $\varOmega=(Q\cup S)^{c},$ $\varOmega$
being $C^{\infty}$ diffeomorphic to $\partial{\color{blue}Q}\times]0,1[$;
\item $\left\Vert Z-X\right\Vert _{C^{r}}\leq C\left\Vert Y-X\right\Vert _{C^{r};U}$
\end{enumerate}
\noindent Moreover, $Q$ and $S$ depend only on $K$ and $U$ and
not on $r,\,s$.
\end{thm}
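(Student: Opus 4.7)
First I construct $Q$, $S$ and $\varOmega$. By Lemma 3 (applicable because $U\setminus K$ is connected) I obtain a compact $n$-submanifold $Q\subset U$ with smooth connected boundary and $K\subset\text{int}\,Q$. Picking a smooth collar $\partial Q\times[0,1]\hookrightarrow U\setminus\text{int}\,Q$ of $\partial Q$, I define $S$ as the closure of $M\setminus(Q\cup(\partial Q\times[0,\tfrac{1}{2})))$. Then $S$ is a compact $n$-submanifold with $\partial S\cong\partial Q$ (connected), $Q\cap S=\emptyset$, $U^{c}\subset\text{int}\,S$, and $\varOmega=(Q\cup S)^{c}$ is smoothly diffeomorphic to $\partial Q\times\,]0,1[$; in particular $\varOmega$ is connected.

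Next comes the non-conservative pasting. I choose once and for all a smooth $\xi\colon M\to[0,1]$ with $\xi\equiv1$ near $Q$ and $\xi\equiv0$ on $S$, so that $\nabla\xi$ is supported in $\varOmega$. Set $\widehat{Z}=\xi Y+(1-\xi)X$ on $U$ and $\widehat{Z}=X$ on $U^{c}$; then $\widehat{Z}\in\mathfrak{X}^{s}(M)$ coincides with $Y$ on $Q$ and with $X$ on $S$. Working in conservative charts and using $\text{div}\,X=\text{div}\,Y=0$, one obtains $g:=\text{div}\,\widehat{Z}=\langle\nabla\xi,Y-X\rangle$, a $C^{s}$ function compactly supported in $\varOmega$ whose $C^{r}$ norm is linearly controlled by $\|Y-X\|_{C^{r};U}$ with a constant depending only on $K$, $U$ and $r$. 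The closedness of $M$ gives the decisive compatibility $\int_{M}g\,d\omega=0$.

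The heart of the proof is then to produce $v\in\mathfrak{X}^{s}(M)$ with $\text{supp}\,v\subset\overline{\varOmega}$, $\text{div}\,v=g$ and $\|v\|_{C^{r}}\leq C'\|g\|_{C^{r}}$. For this I apply Moser's elementary global-to-local reduction: cover $\varOmega\cong\partial Q\times\,]0,1[$ by a finite chain of overlapping conservative cubes $Q_{1},\dots,Q_{N}$ arranged along the product direction; on each cube the divergence equation admits an explicit compactly supported solution with a linear $C^{r}$ estimate, provided the source has mean zero. Using connectedness of $\varOmega$ together with the global compatibility $\int g=0$, I iteratively transfer the nonzero mean of $g|_{Q_{i}}$ into $Q_{i+1}$ by subtracting the divergence of an auxiliary compactly supported vector field in $Q_{i}\cap Q_{i+1}$, ending with mean-zero sources that can be resolved locally. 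Summing the pieces yields $v$, and $Z:=\widehat{Z}-v$ satisfies (1), (2) and (4). To secure (3), I rerun the construction after first mollifying $\widehat{Z}$ on a subregion $\varOmega'\Subset\varOmega$ (keeping $\widehat{Z}$ untouched near $\partial Q$ and $\partial S$, via a bump cut-off, so that (1) and (2) are not affected); since Moser's procedure preserves smoothness wherever the source is smooth, $v$ and hence $Z$ becomes $C^{\infty}$ on $\varOmega'$, and an exhausting sequence of such $\varOmega'$ delivers $C^{\infty}$ regularity throughout $\varOmega$.

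The main obstacle is the third step, specifically the simultaneous control of three features of $v$: compact support inside $\overline{\varOmega}$, divergence exactly equal to $g$, and $C^{r}$-norm linearly bounded by $\|g\|_{C^{r}}$. This is precisely where the hypothesis on $U\setminus K$ enters: without the connectedness of $\varOmega$ the iterative mean transfer cannot terminate, and Moser's direct approach would fail. Everything else---the topology of the first paragraph, the divergence computation of the second, and the mollification of the third---is routine once this linear, compactly supported solvability of the divergence equation on the annulus $\varOmega$ is in hand.
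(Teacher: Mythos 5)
Your construction of $Q$, $S$ and the product annulus $\varOmega$ via Lemma 3 and a collar, the cut-off pasting $\widehat{Z}=\xi Y+(1-\xi)X$, the identity $\operatorname{div}\widehat{Z}=\langle\nabla\xi,\,Y-X\rangle$ with its linear $C^{r}$ bound, the compatibility $\int_{\varOmega}g\,\omega=0$ (your route via $\int_{M}\operatorname{div}\widehat{Z}\,\omega=0$ on the closed $M$ is a clean equivalent of the paper's divergence-theorem computation on $\varOmega$), and the Moser/Dacorogna global-to-local resolution of $\operatorname{div}v=g$ with control of support and a linear estimate all match the paper's proof of conclusions (1), (2) and (4). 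One small imprecision: you cannot cover the non-compact $\varOmega$ by finitely many cubes compactly contained in it; what is covered is a compact neighbourhood of $\operatorname{supp}g$ inside $\varOmega$ (the paper's $\overline{\varOmega_{1}}$), with the union of cubes connected.

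The genuine gap is conclusion (3). First, the assertion that ``Moser's procedure preserves smoothness wherever the source is smooth'' is false: the local solutions $v_{j}$ on the cubes are built from iterated integrals of $h_{j}$ along coordinate fibers, so the regularity of $v_{j}$ at a point is governed by the regularity of $h_{j}$ along an entire segment of the cube, not just near that point. Mollifying $\widehat{Z}$ only on $\varOmega'\Subset\varOmega$ therefore leaves the source merely $C^{s}$ on part of each cube, and the resulting $v$ is in general only $C^{s}$ on $\varOmega'$ as well, so your $Z$ need not be $C^{\infty}$ there. Second, even granting smoothness on each $\varOmega'$, ``an exhausting sequence of such $\varOmega'$'' is not an argument for a single $Z$ smooth on all of $\varOmega$: one must build an infinite telescoping sequence $Z_{k}$ whose successive corrections are supported in shrinking overlap annuli, eventually stabilize on every compact subset of $\varOmega$, and have $C^{r}$ norms summing to less than any prescribed $\epsilon$. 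This is exactly the content of the paper's Theorem 5 (conservative localized smoothing applied to $\varOmega$, the interior of a collar of $\partial Q$), whose proof needs the linearity and universality of the solution operator $\varPhi:h\mapsto v$ to show that $Z_{k}-Z_{k-1}$ is controlled on the overlaps by the closeness of the $k$-th smooth approximant to $X$, plus a Cauchy-sequence limit in $\mathfrak{X}_{\mu}^{r}(M)$. Relatedly, you never use the hypothesis $Y\not\equiv X|_{U}$ nor explain how the extra smoothing perturbation is absorbed into estimate (4); the paper does this by defining $C$ with a $+1$ so that the unsmoothed $Z_{0}$ satisfies the bound with $C-1$, leaving slack $\left\Vert Y-X\right\Vert _{C^{r};U}>0$ for the smoothing step --- which is precisely why (3) cannot be guaranteed when $Y\equiv X|_{U}$.
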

(Note that if $Y\equiv X|_{U}$, then inequality (4) implies that
$Z\equiv X$ on the whole $M$, thus one cannot, in general, guarantee
the conclusion (3) in this case).

\hypertarget{r1}{}
\begin{rem}
(Hölder setting). Theorem \hyperlink{th1}{1} is still valid in the
Hölder setting (i.e for divergence-free vector fields of class $C^{s,\beta}$
endowed with a possibly lower $C^{r,\alpha}$ norm), the unique exception
being that one may require $Z$ to be $C^{\infty}$ in the set of
points where $Z$ neither coincides with $X$ nor with $Y$ essentially
only when $r+\alpha<s+\beta$ (smooth maps being in general only $C^{r,\rho}$-dense
in the class of $C^{r,\alpha}$ maps, $0<\rho<{\color{blue}{\normalcolor \alpha}\leq1}$;
see Theorem \hyperlink{th3}{3} below for the notation). We observe
that while the previous density remark implies that the analogue of
conclusion (3) in Theorem \hyperlink{th2}{2} is impossible to obtain
when $r=s$ and $0<\alpha=\beta\leq1$,\footnote{The case $r+\alpha=s+\beta$ splits into 3 subcases: (a) the one just
mentioned; (b) $r=s$, $\alpha=\beta=0$, which is the $C^{r}$ case
(Theorem \hyperlink{th1}{1}); (c) $s=r+1$, $\alpha=1$, $\beta=0$,
which again reduces to Theorem \hyperlink{th1}{1}, the norms $C^{r,1}$
and $C^{r+1}$ being equivalent (\cite[p.342]{CDK}). Thus, only (a)
is ``Hölder relevant''.} the remaining relevant Hölder case $r+\alpha<s+\beta$ is actually
free from these constraints. In particular, using the above mentioned
(Euclidean space) Hölder density result in place of the $C^{r}$-density
of $C^{\infty}$ in $C^{r}$, the proof of Theorem \hyperlink{th4}{4}
immediately yields that $\mathfrak{X}_{\mu}^{\infty}(M)$ is $C^{r,\alpha}$-dense
in $\mathfrak{X}_{\mu}^{s,\beta}(M)$, when $r+\alpha<s+\beta$. Note,
however, that, a priori, this is not enough to obtain the corresponding
Hölder version of Theorem \hyperlink{th5}{5} (which, by its turn,
is used to obtain conclusion (3) in Theorem \hyperlink{th2}{2} above),
as the resulting vector field $Z$ would still be obtained as the
limit of a sequence of $C^{s,\beta}$ vector fields, which sequence
is Cauchy only in relation to the lower $C^{r,\alpha}$-norm, and
this is not enough to ensure that $Z$ belongs to the higher class
$C^{s,\beta}$ as required. Nevertheless, this problem can be overcome
by a simple lower semicontinuity reasoning: using \cite[(7.14), p.148 and Lemma 7.3, p.150]{GT}
one sees that modifying the proof of Theorem \hyperlink{th4}{4} as
explained above, the sequence $Z_{k}$ of smooth, divergence-free
vector fields $C^{r,\alpha}$-converging to $X$ has $C^{s,\beta}$
norm uniformly bounded by that of $X$ times a constant. Now, carrying
the proof of Theorem \hyperlink{th5}{5} using these smooth approximations
to $X$, it is immediate to check that an analogue uniform boundeness
of the $C^{s,\beta}$ norms also holds for all the auxiliary functions
and vector fields involved in the construction of the Cauchy sequence
$Z_{k}$ (the universality of the operator $\varPhi$ in Lemma \hyperlink{l1}{1}
being essential here). This finally yields that the $C^{s,\beta}$
norms of the vector fields in this sequence are still uniformly bounded
by the $C^{s,\beta}$ norm of $X$ times a constant (which is independent
of $X$). This guarantees that the limit vector field $Z$ actually
belongs to the $C^{s,\beta}$ class by lower semicontinuity (see e.g.
\cite[p.358]{CDK}). We finally observe that the existence of manifolds
$Q$ and $S$ satisfying (1) - (3) as in Theorem \hyperlink{th2}{2}
also holds for Theorem \hyperlink{th3}{3}, except that (as explained
above) one cannot guarantee $Z$ to be $C^{\infty}$ in $\varOmega$
when $r=s$ and $0<\alpha=\beta\leq1$.

In Section \hyperlink{se3.2}{3.2} we briefly outline the few changes
needed in the proof of Theorem \hyperlink{th1}{1} to obtain Theorem
\hyperlink{th3}{3}. There, it is also explained why constant $C$
actually does not depend on the Hölder exponent $\alpha$, but only
on $r$, $K$ and $U$.

Given an open set $U\subset M$, $s\in\mathbb{Z}^{+}$ and $0<\beta\leq1$,
$\mathfrak{X}^{s,\beta}(U)$ is the subspace of $\mathfrak{X}^{s}(U)$
consisting of vector fields $Y$ such that, on local charts, each
partial derivative of $Y$ of order $s$ is $\beta$-Hölder continuous
(these derivatives being functions from $\phi_{j}(V_{j}\cap U)$ into
$\mathbb{R}^{n}$). One sets $C^{s,0}:=C^{s}$ and $C^{\infty,\beta}:=C^{\infty}$.
\end{rem}
\hypertarget{th3}{}
\begin{thm}
\emph{($C^{s,\beta}$ conservative pasting with $C^{r,\alpha}$-closeness)}.
Let $M$ be a manifold as above. Suppose that $K$ is a compact subset
with an open neighbourhood $U\subsetneq M$ such that $U\setminus K$
is connected. Then, given \textup{$s\in\mathbb{Z}^{+}\cup\{\infty\}$,
}$0\leq\alpha,\beta\leq1$\textup{,} and an integer $1\leq r\leq s$
such that $r+\alpha\leq s+\beta$, there is an open set $K\subset V\subset U$
and a constant $C=C(r,K,U)>1$ such that: given $X\in\mathfrak{X}_{\mu}^{s,\beta}(M)$
and a $C^{r}$-bounded $Y\in\mathfrak{X}_{\mu}^{s,\beta}(U)$, there
exists $Z\in\mathfrak{X}_{\mu}^{s,\beta}(M)$ satisfying:

\begin{enumerate}
\item $Z=Y$ in $V$;
\item $Z=X$ in a neighbourhood of $U^{c}$;
\item $\left\Vert Z-X\right\Vert _{C^{r,\alpha}}\leq C\left\Vert Y-X\right\Vert _{C^{r,\alpha};U}.$
\end{enumerate}
\noindent Moreover, $V$ depends only on $K$ and $U$ and with the
exception of the case $r=s$ and $0<\alpha=\beta\leq1$, one may further
require $Z$ to be $C^{\infty}$ at every point where it neither coincides
with $X$ nor with $Y$.
\end{thm}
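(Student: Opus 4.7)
The plan is to mirror the proof of Theorem 1, substituting each $C^{r}$ ingredient by its $C^{r,\alpha}$ counterpart. First, by Lemma 3, choose a compact $n$-submanifold $P\subset U$ with smooth connected boundary such that $K\subset\operatorname{int}P$; this step is purely topological, uses only that $U\setminus K$ is connected, and yields the same open set $V$ and the same independence of the resulting constants from $(r,\alpha,s,\beta)$ as in the $C^{r}$ case. Select a smooth cutoff $\xi$ equal to $1$ on a neighbourhood $V$ of $K$ inside $\operatorname{int}P$ and supported in a slightly smaller compactum $P'\Subset P$. The non-conservative pasting $\widehat{Z}:=\xi Y+(1-\xi)X$ is then of class $C^{s,\beta}$ on $M$, coincides with $Y$ on $V$ and with $X$ outside $P$, and since $X,Y$ are divergence-free one has $\operatorname{div}\widehat{Z}=\nabla\xi\cdot(Y-X)$, supported in the open annulus $\Omega:=\{0<\xi<1\}$. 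Crucially, no regularity is lost in taking the divergence: $\operatorname{div}\widehat{Z}\in C^{s,\beta}$ and its $C^{r-1,\alpha}$ norm is linearly bounded by $\|Y-X\|_{C^{r,\alpha};U}$, with constant depending only on $\xi$.

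The core step is to produce $v\in\mathfrak{X}^{s,\beta}(M)$ satisfying $\operatorname{div}v=\operatorname{div}\widehat Z$ and $\operatorname{supp}(v)\subset\overline{\Omega}$. Here I would invoke the Hölder optimal-regularity divergence solver \emph{with control of support} from \cite{TE} (the Hölder analogue of the Moser-type construction used in Theorem 1); connectivity of $\Omega$---which is guaranteed because $\partial P$ is connected, so that $\Omega$ is diffeomorphic to $\partial P\times(0,1)$---is essential and was highlighted as point (i) in the Introduction. The Schauder estimate built into this solver gives $\|v\|_{C^{r,\alpha}}\leq C(r,K,U)\,\|\operatorname{div}\widehat Z\|_{C^{r-1,\alpha}}\leq C'\,\|Y-X\|_{C^{r,\alpha};U}$, and setting $Z:=\widehat Z-v$ produces a divergence-free $C^{s,\beta}$ vector field satisfying (1), (2) and (3). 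That $C$ depends on $r$ but not on $\alpha$ reflects the uniformity of the Schauder constants in the Hölder exponent once $r$ and the geometry are fixed.

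For the last assertion (smoothness of $Z$ off the coincidence sets), one runs the Hölder version of Theorem 5, which requires $C^{\infty}$ to be $C^{r,\alpha}$-dense in $C^{s,\beta}$. This density fails exactly in the excluded case $r=s$ and $0<\alpha=\beta\leq 1$; in every other relevant regime one has the effective inequality $r+\alpha<s+\beta$ (the boundary case $s=r+1$, $\alpha=1$, $\beta=0$ reducing to the $C^{r}$ setting), and the lower-semicontinuity argument sketched in Remark 1 shows that the Cauchy sequence in $C^{r,\alpha}$ producing $Z$ still has uniformly $C^{s,\beta}$-bounded terms, placing the limit in $C^{s,\beta}$. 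I expect the main obstacle to be neither the topological construction nor the core divergence step, both inherited from Theorem 1, but rather the two-norm bookkeeping that keeps $Z$ in the target class $C^{s,\beta}$ while only controlling Cauchy behaviour in the weaker $C^{r,\alpha}$ norm; this is precisely where the universality of the operator $\Phi$ from Lemma 1 and the uniformity of the Schauder bounds from \cite{TE} are indispensable, as indicated in Remark 1.
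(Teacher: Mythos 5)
Your topological setup (Lemma 3, the submanifold $P$ with connected boundary, the cutoff $\xi$, the observation that $\operatorname{div}\widehat{Z}=\nabla\xi\cdot(Y-X)$ loses no regularity) and your treatment of the final smoothing assertion both match the paper. The gap is in the core analytic step. You solve $\operatorname{div}v=\operatorname{div}\widehat{Z}$ by feeding the $C^{r-1,\alpha}$ norm of the data into the optimal-regularity (Schauder-based) divergence solver of \cite{TE}, gaining the derivative back through elliptic estimates. But Theorem 3 allows $0\leq\alpha,\beta\leq1$, including the endpoint exponents: for $\alpha=0$ the optimal-regularity estimate $\left\Vert v\right\Vert _{C^{r}}\leq C\left\Vert h\right\Vert _{C^{r-1}}$ is precisely what the paper flags as unavailable in dimension $n\geq2$ (point (iii) of the Introduction, \cite[p.192 and 180]{CDK}), and at $\alpha=1$ the Schauder machinery likewise fails. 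Your claim that the constant is independent of $\alpha$ because ``Schauder constants are uniform in the H\"older exponent'' is also false --- they blow up as $\alpha\rightarrow0^{+}$ and $\alpha\rightarrow1^{-}$, which is exactly why the integer case is problematic. The paper avoids all of this: since $\operatorname{div}\widehat{Z}$ is already of class $C^{s,\beta}$ with $C^{r,\alpha}$ norm linearly bounded by $\left\Vert Y-X\right\Vert _{C^{r,\alpha};U}$ (no derivative is lost), no derivative-gaining solver is needed. It uses instead the elementary Moser global-to-local reduction (the operator $\varPhi$ of Lemma 1, built from \cite[Lemmas 9.8 and 9.9]{CDK}), which produces $v$ of the \emph{same} class as $h$, compactly supported inside the open annulus $\varOmega$, with $\left\Vert v\right\Vert _{C^{r,\alpha}}\leq C\left\Vert h\right\Vert _{C^{r,\alpha}}$ valid for every $\alpha\in[0,1]$; the only analytic inputs are H\"older product and composition estimates on Lipschitz domains (Section 3.2), and the $\alpha$-independence of $C$ comes from bounding the $C^{r,\alpha}$ and $C^{r+1,\alpha}$ norms of the finitely many fixed auxiliary functions uniformly by their $C^{r+2}$ norms, not from any uniformity of elliptic estimates.

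A second, smaller point: you only require $\operatorname{supp}(v)\subset\overline{\varOmega}$. To extend $v$ by zero in the class $C^{s,\beta}$ one needs $v$ compactly supported \emph{inside} the open set $\varOmega$; this is why the paper introduces the intermediate domain $\varOmega_{1}$ with $\overline{\varOmega_{1}}\subset\varOmega$, arranges $\operatorname{supp}h\subset\varOmega_{1}$, and then covers $\overline{\varOmega_{1}}$ by cubes contained in $\varOmega$ before running the local construction.
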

\medskip{}

\hypertarget{W1}{}

\noindent \emph{Warning }1\emph{.} It should be stressed that if $U\setminus K$
is \emph{not} connected, then cobordism constraints might occur making
(in general) impossible the conservative pasting of vector fields
$X$ and $Y$ as stated in Theorems \hyperlink{th1}{1}, \hyperlink{th2}{2}
and \hyperlink{th3}{3} (by \emph{any }method and under \emph{any}
regularity assumptions, see Example \hyperlink{Ex1}{1}). If $U\setminus K$
fails to be connected, a conservative $C^{r}$ perturbation $Y$ of
the restriction of $X$ to $U$ may actually fail to have a divergence-free
extension to the whole $M$, even if the $C^{r}$ closeness condition
is dropped (Example \hyperlink{Ex2}{2}).

\hypertarget{Ex1}{}
\begin{example}
Represent the flat 2-torus as $M=\mathbb{S}^{1}\times(\mathbb{R}/\mathbb{Z})$
with coordinates $(s,z)$ and endow it with the standard volume form.
Let $X$ be the vertical vector field $\frac{\partial}{\partial z}$
and consider its $\epsilon\text{-}$$C^{\infty}$ perturbation $Y=(1+\epsilon$)$\frac{\partial}{\partial z}$,
$\epsilon>0$. Then, there is no $Z\in\mathfrak{X}_{\mu}^{1}(M)$
such that (a) $Z=Y$ in $K=\mathbb{S}^{1}\times1/2$ and (b) $Z=X$
in $\gamma=\mathbb{S}^{1}\times1$. The vector fields $X$ and $Y$
have a different flux across the cobordant circles $\mathbb{S}^{1}\times z$,
thus the divergence of $Z$ cannot identically vanish inside any of
the two annulus bounded by $K$ and $\gamma$.
\end{example}
\hypertarget{Ex2}{}
\begin{example}
Extend the annulus $U=\mathbb{S}^{1}\times]-1,1[\,\subset\mathbb{R}^{3}$
to a smoothly embedded 2-sphere $S$, invariant under rotation about
the $z$-axis and endowed with the canonical volume form inherited
from $\mathbb{\mathbb{R}}^{3}$. Endow $S$ with the rotation vector
field $X:(x,y,z)\mapsto(-y,x,0)$ and consider the $\epsilon\text{-}$$C^{\infty}$
perturbation of the restriction of $X$ to $U$ given by $Y=(-y,x,\epsilon)$,
$\epsilon>0$. Both $X$ and $Y$ are divergence-free but there is
no $C^{1}$ divergence-free extension of $Y$ to the whole $S$, as
the flux of $Y$ across the boundary circle $\gamma=\mathbb{S}^{1}\times0$
is not zero, the divergence being necessarily positive around some
point of the southern hemisphere.

\medskip{}
\end{example}
\hypertarget{W2}{}

\noindent \emph{Warning }2\emph{.} The dependence of constant $C$
on $r$, $K$ and $U$ is obviously unavoidable, whatever the method
employed to achieve the pasting of the vector fields. For instance,
given a point $p\in M$, in some local chart set $K=\{p\}$ and $U=B_{d}(p)$
a small open ball whose closure is contained in the chart. Since $d=\mbox{dist}(K,\,U^{c})$,
the mean value theorem then implies that $C=C(r,K,U)>d^{-r}$. In
general and by the same reason, for $K$ and $U$ as in Theorem \hyperlink{th1}{1},
a ``thin'' $U\setminus K$ implies a quite large $C$. More precisely,
assume for the moment that $M$ is endowed with a Riemannian structure
inducing an intrinsic metric (this structure is actually unnecessary
for the results here obtained). Suppose that for each $\epsilon>0$,
$U_{\epsilon}$ is an open neighbourhood of $K$ contained in $B_{\epsilon}(K)$
with $U_{\epsilon}\setminus K$ connected. Then $C(r,K,U_{\epsilon})\rightarrow\infty$
as $\epsilon\rightarrow0$. To get an idea of how the ``geometry''
of $U\setminus K$ tends to impact the size of $C$, and in the specific
context of the method employed here to solve equation $\mbox{div}\,v=h$,
observe that, roughly speaking, the ``thinner'' and possibly more
``convoluted'' the image of $U_{\epsilon}\setminus K$ on the atlas
as $\epsilon$ tends to zero, the larger the number $N+1$ of small
cubes $U_{j}$ needed to achieve the covering 
\[
\overline{\varOmega_{1}}\subset\bigcup_{j=0}^{N}U_{j}\subset\varOmega\subset{\normalcolor {\color{blue}{\normalcolor U_{\epsilon}}}}\setminus K
\]
(see the proof of Theorem \hyperlink{th1}{1}) and a large number
of small cubes contributes to $C$ with a very large multiplicative
factor (the smaller the cubes the larger this factor becomes, see,
in particular, Section \hyperlink{3c}{3.(c)} and Footnote 5). Together,
this and the previous Warning impose double caution on the use of
the pasting lemma to attempt general perturbations of divergence-free
vector fields with a priori unspecified support (however, see Theorem
\hyperlink{th6}{6}). As stated, with $\delta$ independent of $K$
and $U$, Theorem 3.1 in \cite{AM} contradicts the mean value theorem,
assuming, as implicit, that $W^{c}$ is nonempty (in the paper's notation). 

\medskip{}

\hypertarget{se2.1}{}

\subsection{Conservative localized smoothing and extension.}

The proof of next result corrects and generalizes that of \cite[Theorem 2.2]{AM}.
It provides a short alternative way to establish Zuppa's regularization
theorem \cite{ZU} without the need to introduce a Riemannian structure
on the manifold.

\hypertarget{th4}{}
\begin{thm}
Let $M$ be a manifold as above and $r\in\mathbb{Z}^{+}$. Then, 
\[
\mathfrak{X}_{\mu}^{\infty}(M)\text{ \,is \,}C^{r}\text{-dense in \,}\mathfrak{X}_{\mu}^{r}(M).
\]
\end{thm}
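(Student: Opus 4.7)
The plan is to iteratively smooth $X$ using the pasting lemma (Theorem \hyperlink{th1}{1}). Fix $X\in\mathfrak{X}_\mu^r(M)$ and $\epsilon>0$. Using the regular conservative atlas $(V_i,\phi_i)_{i\leq m}$, I would first produce a finite cover of $M$ by nested triples $A_k\subset K_k\subset U_k$ ($k\leq N$), with $A_k,U_k$ open and $K_k$ compact, each obtained as $\phi_{i(k)}$-preimages of three concentric balls in some chart $V_{i(k)}$. This automatically gives $\overline{U_k}\subset V_{i(k)}$, $U_k\setminus K_k$ connected, and $U_k\subsetneq M$; by compactness the radii can be arranged so that $\bigcup_{k\leq N}A_k=M$. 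Let $C_k:=C(r,K_k,U_k)$ be the closeness constant supplied by Theorem \hyperlink{th1}{1}.

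The local tool is standard Friedrichs mollification: since the atlas is conservative, a vector field on $V_i$ is divergence-free iff its chart representation is componentwise Euclidean divergence-free, and componentwise convolution with a standard mollifier commutes with partial derivatives. Hence mollifying a Euclidean divergence-free $C^r$ vector field yields a smooth divergence-free vector field that converges to the original in $C^r$-norm on compact subsets of the domain as the mollification parameter tends to $0$.

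The smoothing is carried out inductively. Set $X_0:=X$. At step $k\in\{1,\ldots,N\}$, mollify $X_{k-1}$ inside the chart $V_{i(k)}$ with parameter small enough to obtain a smooth, $\mu$-preserving $Y_k$ defined on a neighbourhood of $\overline{U_k}$ with $\|Y_k-X_{k-1}\|_{C^r;U_k}<\epsilon/(NC_k)$. Applying Theorem \hyperlink{th1}{1} to $X_{k-1}$, $Y_k$, $K_k$, $U_k$ yields $X_k\in\mathfrak{X}_\mu^r(M)$ which equals $Y_k$ in a neighbourhood of $K_k$, equals $X_{k-1}$ in a neighbourhood of $U_k^c$, satisfies $\|X_k-X_{k-1}\|_{C^r}<\epsilon/N$, and (by the ``moreover'' clause) is $C^\infty$ at every point where it differs from both $X_{k-1}$ and $Y_k$. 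A telescoping argument gives $\|X_N-X\|_{C^r}<\epsilon$. It remains to verify by induction on $k$ that $X_k$ is $C^\infty$ on $A_1\cup\cdots\cup A_k$: on $A_k$ one has $X_k=Y_k$, smooth; on $A_j$ ($j<k$) every point falls into one of three cases\textemdash $X_k$ locally equals $X_{k-1}$ (smooth on $A_j$ by the inductive hypothesis), locally equals $Y_k$ (smooth on $U_k$), or differs from both (smooth by the moreover clause). Thus $X_N$ is smooth on all of $M$, $\mu$-preserving, and within $\epsilon$ of $X$ in $C^r$-norm, proving density.

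All the real content is carried by Theorem \hyperlink{th1}{1}; no deep obstacle remains, only bookkeeping. The one point that must not be overlooked is the propagation of smoothness across iterations\textemdash at each step one must not sacrifice the smoothness already gained on $A_1\cup\cdots\cup A_{k-1}$\textemdash and this is exactly what the moreover clause of Theorem \hyperlink{th1}{1} is designed to guarantee.
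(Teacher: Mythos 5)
Your reduction to Theorem \hyperlink{th1}{1} contains a genuine gap in the smoothness-propagation step, and it is moreover circular within the logical architecture of the paper. The circularity first: the ``moreover'' clause of Theorem \hyperlink{th1}{1} (smoothness of $Z$ at the points where it coincides with neither $X$ nor $Y$) is obtained in the paper by applying Theorem \hyperlink{th5}{5} to smooth the transition annulus, and the proof of Theorem \hyperlink{th5}{5} in turn consumes smooth divergence-free vector fields $X_j\in\mathfrak{X}_{\mu}^{\infty}(M)$ arbitrarily $C^{r}$-close to $X$ --- that is, the very density statement you are trying to prove. Only conclusions (1)--(3) of Theorem \hyperlink{th1}{1} are available at this stage of the paper.

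Independently of that, your trichotomy on $A_j$ ($j<k$) is not exhaustive. By Theorem \hyperlink{th2}{2}, $X_k$ equals $Y_k$ on a compact submanifold $Q_k$, equals $X_{k-1}$ on a compact submanifold $S_k$, and is smooth on the open set $(Q_k\cup S_k)^c$; at a point $x\in\partial Q_k$ (or $\partial S_k$) the field $X_k$ coincides \emph{pointwise} with $Y_k$ (resp.\ $X_{k-1}$) --- so the ``moreover'' clause, which is a pointwise non-coincidence condition, gives nothing --- yet it coincides with it on no neighbourhood of $x$, and there only $C^{r}$ regularity is guaranteed: the derivatives of the smoothed field on $(Q_k\cup S_k)^c$ have no reason to match those of $Y_k$ to all orders along $\partial Q_k$. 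These boundary hypersurfaces can certainly meet $A_1\cup\cdots\cup A_{k-1}$ (indeed $\partial Q_k\subset U_k\setminus K_k$ is disjoint from $A_k$, hence must be covered by the other $A_j$), the defect is never repaired at later stages, and the last step always creates a fresh one; so $X_N$ need not be smooth. This is exactly the difficulty that forces the delicate infinite iteration with overlapping annuli and a Cauchy-sequence argument in the proof of Theorem \hyperlink{th5}{5}. The paper's proof of the present statement avoids iteration altogether: it mollifies all chart expressions of $X$ simultaneously, glues them by a partition of unity into a single globally smooth (but no longer divergence-free) field $Y_k$, checks that $h_k=\text{div}\,Y_k$ is supported in a connected annulus and has zero integral there, and removes it in one stroke by the smooth, compactly supported solution of $\text{div}\,v_k=h_k$ furnished by Lemma \hyperlink{l1}{1}; the corrected field $Y_k-v_k$ is then smooth everywhere, with no transition boundaries to worry about. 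To salvage an argument in your spirit you would have to replace the appeal to the moreover clause by such an explicit divergence correction of the glued mollifications --- which is precisely the paper's proof.
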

\begin{proof}
Let $(V_{i},\phi_{i})_{i\leq m}$ be the atlas of $M$. There is no
difficulty in finding a partition of unity $\xi_{i\leq m}$ subordinate
to $V_{i\leq m}$ with $\xi_{1}=1$ in $\phi_{1}^{-1}(\frac{2\lambda}{3}\mathbb{D}^{n})$
(see the Convention above). Let $X_{i}=(X_{i}^{1},\ldots,X_{i}^{n})$,
$i\leq m$, be the expressions of $X\in\mathfrak{X}_{\mu}^{r}(M)$
in the local charts. Since the atlas is regular (see the Convention,
Section \hyperlink{se2}{2}), using convolutions one can find, for
each $i$, a sequence $X_{ik}$ of smooth vector fields on $\phi_{i}(V_{i})=\lambda\mathbb{B}^{n}\,$
$C^{r}$-converging to $X_{i}$. Observe that, as $X_{i}$, each $X_{ik}$
is divergence-free (in relation to the standard volume form on $\mathbb{R}^{n}$),
since the convolution operator $*$ is bilinear and satisfies $\partial_{j}(\rho*X_{i}^{j})=\rho*\partial_{j}X_{i}^{j}$.
To simplify the notation, one still denotes by $X_{ik}$ the pullback
$\phi_{i}^{*}(X_{ik})$. Define the smooth vector field on $M$,
\[
Y_{k}=\sum_{i\leq m}\xi_{i}X_{ik}
\]
setting $\xi_{i}X_{ik}:=0$ in $V_{i}^{c}$. Since $\sum_{i\leq m}\xi_{i}=1$,
the estimate for the $\left|\cdot\right|_{r}$ norm of the product
(end of Section \hyperlink{se5.1}{5.1}) gives
\begin{equation}
\left|Y_{k}-X\right|_{r}=\Bigl|\sum_{i\leq m}\xi_{i}(X_{ik}-X)\Bigr|_{r}\leq m2^{r}\underset{i\leq m}{\text{max}}\left|\xi_{i}\right|_{r}\,\underset{i\leq m}{\text{max}}\big|X_{ik}-X|_{V_{i}}\big|_{r}
\end{equation}
Since $X$ and the $X_{ik}$'s are divergence-free in $M$ and $V_{i}$,
respectively, and $\xi_{i}$ is compactly supported inside $V_{i}$,
\[
\text{div\,}Y_{k}=\text{div\,}Y_{k}-\text{div}\,X=\text{div}\,(Y_{k}-X)=\sum_{{\scriptscriptstyle i\leq m;\,j\leq n}}(\partial_{j}\xi_{i})\bigl(X_{ik}^{j}-X^{j}\bigr)
\]
and 
\begin{equation}
\left|\text{div\,}Y_{k}\right|_{r}\leq mn2^{r}\underset{i\leq m}{\text{max}}\bigl|\xi_{i}\bigr|_{r+1}\,\underset{i\leq m}{\text{max}}\big|X_{ik}-X|_{V_{i}}\big|_{r}
\end{equation}
Since the norms $\left|\cdot\right|_{r}$ and $\left\Vert \cdot\right\Vert {}_{C^{r}}$
are equivalent (Section \hyperlink{se5.1}{5.1}) we work with the
former. From (2.1) and (2.2) it follows that
\begin{equation}
\text{ \ensuremath{\left|Y_{k}-X\right|_{r},\,\left|\text{div\,}Y_{k}\right|_{r}\xrightarrow[k\rightarrow\infty]{}0}\,\, since\, \,}X_{ik}\xrightarrow[k\rightarrow\infty]{C^{r}}X|_{V_{i}}
\end{equation}
Now, $\text{div\,}Y_{k}=0$ in $\mathcal{D}=\phi_{1}^{-1}(\frac{2\lambda}{3}\mathbb{D}^{n})$,
since, $\xi_{1}|_{\mathcal{D}}=1$ and thus $Y_{k}=X_{1k}$ in this
set. Let $\varOmega=M\setminus\phi_{1}^{-1}(\frac{\lambda}{3}\mathbb{D}^{n})$
and $\varOmega_{1}=M\setminus\phi_{1}^{-1}(\frac{\lambda}{2}\mathbb{D}^{n})$.
Let $h_{k}=\text{div}\,Y_{k}$. Clearly $\overline{\varOmega_{1}}\subset\varOmega$
and $\text{supp}\,h_{k}\subset\varOmega_{1}$. Observe that $\varOmega$
is connected since $M$ and $\partial\varOmega$ (diffeomorphic to
$\mathbb{S}^{n-1}$) are both connected, and the same holds for $\varOmega_{1}$.
Moreover, $\int_{\varOmega}h_{k}\omega=0$ ($\omega$ being the volume
form on $M$), since by the divergence theorem,
\[
\int_{\varOmega}\text{(div}\,Y_{k})\omega=\int_{\partial\varOmega}Y_{k}\lrcorner\,\omega=-\int_{\partial\mathcal{B}}X_{1k}\lrcorner\,\omega=-\int_{\mathcal{B}}\text{(div}\,X_{1k})\omega=0
\]
where $\mathcal{B}=\phi_{1}^{-1}(\frac{\lambda_{1}}{3}\mathbb{B}^{n})$.
Now, by Lemma \hyperlink{l1}{1} (below), there is a constant $C=C(r,\varOmega_{1},\varOmega)>0$
and $v_{k}\in\mathfrak{X}^{\infty}(M)$ such that
\begin{equation}
\begin{cases}
\text{div}\,v_{k}=h_{k}\\
\text{supp}\,v_{k}\subset\varOmega\\
\left|v_{k}\right|_{r}\leq C\left|h_{k}\right|_{r}
\end{cases}
\end{equation}
Let $Z_{k}=Y_{k}-v_{k}$. Then, $Z_{k}\in\mathfrak{X}_{\mu}^{\infty}(M)$
and finally by (2.3) and (2.4), 
\[
\left|Z_{k}-X\right|_{r}\leq\left|Y_{k}-X\right|_{r}+\left|v_{k}\right|_{r}\xrightarrow[k\rightarrow\infty]{}0
\]
\end{proof}
At least for certain open sets $\varOmega\subset M$,\footnote{In the preprint arXiv:1611.01694v3 to this paper, it was stated without
proof (unnumbered theorem on page 8) that Theorem \hyperlink{th5}{5}
below still holds for arbitrary open sets $\varOmega\subset M$. It
turned out that the proof known to the author contained an error.
Therefore, and to the best of our knowledge, the general case remains,
so far, conjectural.} which turn out to be useful in many important situations, one may
conservatively $C^{r}$-perturb a divergence-free vector field $X$
in order to make it smooth inside $\varOmega$, while keeping $X$
unchanged on the complement of $\varOmega$. This result has the advantage
of avoiding the occurrence of a ``transition annulus'', where typically
$Z$ is neither smooth nor it coincides with $X.$ If, for instance,
one needs to perform a preliminary conservative $C^{r}$ perturbation
of a vector field $X$ in order to increase its regularity, it may
be actually possible to smooth it just where this is really needed
for the construction of the subsequent perturbations (e.g. on small
open neighbourhoods of certain periodic orbits in dimension $n\geq3$),
while keeping $X$ unchanged on the complement of that set. The advantages
in terms of dynamical control are evident.\medskip{}

Given a compact $n$-submanifold $N\subset M$ ($n=\text{dim\,}M$)
with $C^{r\geq2}$ boundary, one may construct a $C^{r-1}$ vector
field transverse to $\partial N$ and pointing inward, which by its
turn defines a $C^{r-1}$ collar embedding $\zeta:\partial N\times[0,\infty[\hookrightarrow N$,
$\zeta(x,0)=x$. For each $\epsilon>0$, $\zeta(\partial N\times[0,\epsilon])$
is a (compact $C^{r-1}$) \emph{collar} \emph{of }$\partial N$.

\hypertarget{th5}{}
\begin{thm}
\emph{(Conservative localized smoothing - special case).} Let $M$
be a manifold as above and $N\subset M$ a compact $n$-submanifold
with connected $C^{3}$ boundary. Let $\varOmega$ be either the interior
of $N$ or the interior of a (compact $C^{2}$) collar of $\partial N$.
Given $X\in\mathfrak{X}_{\mu}^{r}(M)$, $r\in\mathbb{Z}^{+}$, there
exists $Z\in\mathfrak{X}_{\mu}^{r}(M)$, as $C^{r}$-close to $X$
as desired, satisfying:
\begin{enumerate}
\item $Z$ is $C^{\infty}$ in $\varOmega$;
\item $Z=X$ in $\varOmega^{c}$.
\end{enumerate}
\end{thm}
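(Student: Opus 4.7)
My plan is to prove Theorem \hyperlink{th5}{5} via an exhaustion-plus-iteration argument invoking Theorems \hyperlink{th2}{2} and \hyperlink{th4}{4}, with a stabilization trick requiring \emph{two} pastings per inductive step that is essential for the $C^r$-limit to actually be $C^\infty$ on $\varOmega$. I would treat Case 1 ($\varOmega=\text{int}\,N$) in detail; Case 2 (collar) follows by analogous but topologically more delicate means. Using an inward $C^2$ collar of $\partial N$, build a nested exhaustion $N_1\subset N_2\subset\cdots\subset\text{int}\,N$ with each $N_k$ $C^\infty$-diffeomorphic to $N$ (hence with connected boundary), $N_k\subset\text{int}\,N_{k+1}$, and $\bigcup_kN_k=\text{int}\,N$. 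Fixing $\epsilon>0$, inductively construct $Z_k\in\mathfrak{X}_\mu^r(M)$ with $Z_0=X$ so that for each $k\ge 1$: (a) $Z_k$ is $C^\infty$ on an open neighborhood $V_k$ of $N_k$; (b) $Z_k=X$ on $\varOmega^c$; (c) $Z_k=Z_{k-1}$ on a neighborhood of $N_{k-1}$ (the \emph{stabilization} property); and (d) $\lVert Z_k-Z_{k-1}\rVert_{C^r}<\epsilon\cdot 2^{-k}$. Property (c) is essential: without it, $C^r$-convergence of vector fields each smooth on every $N_j$ yields only a $C^r$ limit.

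For the inductive step at $k\ge 2$, first use Theorem \hyperlink{th4}{4} to pick $\widetilde Z\in\mathfrak{X}_\mu^\infty(M)$ very $C^r$-close to $Z_{k-1}$, then perform two applications of Theorem \hyperlink{th2}{2}. The \emph{first} pasting uses a two-sided open collar thickening $U^{(1)}$ of $N_{k-1}$, chosen so that $U^{(1)}\subset V_{k-1}\cap\text{int}\,N_k$ and $U^{(1)}\setminus N_{k-1}$ (a one-sided collar of $\partial N_{k-1}$) is connected. Apply Theorem \hyperlink{th2}{2} with $K^{(1)}=N_{k-1}$, $U=U^{(1)}$, $X=\widetilde Z$, $Y=Z_{k-1}|_{U^{(1)}}$: since $U^{(1)}\subset V_{k-1}$, both $X$ and $Y$ are $C^\infty$, so Theorem \hyperlink{th2}{2} applied with $s=\infty$ yields a \emph{globally smooth} $\hat Z\in\mathfrak{X}_\mu^\infty(M)$ equal to $Z_{k-1}$ on an open neighborhood $\text{int}\,Q_1\supset N_{k-1}$ and to $\widetilde Z$ outside $U^{(1)}$. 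The \emph{second} pasting then applies Theorem \hyperlink{th2}{2} with $K^{(2)}=N_k$, $U=\text{int}\,N$, $X=Z_{k-1}$, $Y=\hat Z|_{\text{int}\,N}$, taking $s=r$; the hypothesis that $\text{int}\,N\setminus N_k$ is connected holds because $\partial N$ is connected. The outcome $Z_k$ satisfies (a) ($Z_k=\hat Z$, globally smooth, on $\text{int}\,Q_2\supset N_k$), (b) ($Z_k=Z_{k-1}=X$ on $\text{int}\,S_2\supset\varOmega^c$), and (c) (because $N_{k-1}\subset\text{int}\,Q_2$ forces $Z_k=\hat Z$ there, and $\hat Z=Z_{k-1}$ on $\text{int}\,Q_1\supset N_{k-1}$); (d) follows by chaining the $C^r$-estimate of Theorem \hyperlink{th2}{2}(4) through both pastings and shrinking $\lVert\widetilde Z-Z_{k-1}\rVert_{C^r}$ accordingly. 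For $k=1$ the first pasting is trivial and only the second pasting is performed.

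By (d), $(Z_k)$ is Cauchy in $C^r$-norm, so $Z:=\lim_k Z_k\in\mathfrak{X}_\mu^r(M)$ exists with $\lVert Z-X\rVert_{C^r}<\epsilon$; (b) passes to the limit; iterating (c) yields $Z\equiv Z_{k_0}$ on a neighborhood of $N_{k_0}$ for every $k_0$, so by (a) $Z$ is $C^\infty$ on $\bigcup_{k_0}N_{k_0}=\varOmega$. For Case 2, I would exhaust the collar interior by sub-collars $\zeta(\partial N\times[\delta_k,\epsilon-\delta_k])$; since these have two boundary components (so Theorem \hyperlink{th2}{2} cannot be applied with them as $K$ directly), one reduces to the Case-1 mechanism applied separately to each of the two boundary components of the ambient collar, via one-sided exhaustions. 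The main obstacle I anticipate lies in Case 2's topological bookkeeping, together with verifying in Case 1 that the constraint $U^{(1)}\subset V_{k-1}$ (where $V_{k-1}$ may shrink with $k$) remains geometrically compatible with $U^{(1)}\setminus N_{k-1}$ connected and $U^{(1)}\subset\text{int}\,N_k$.
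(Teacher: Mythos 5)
Your Case 1 argument is correct, and it takes a genuinely different route from the paper's. The paper does not iterate the pasting theorem at all: it covers $\varOmega=\text{int}\,N$ by countably many overlapping annuli $\varLambda_k$ with a subordinate partition of unity, forms at each stage a convex combination $\widehat{Z_k}$ of $X$ with smooth approximants $X_0,\dots,X_k$, and cancels the divergence on each overlap $\varOmega_k=\varLambda_k\cap\varLambda_{k+1}$ by a direct application of Lemma \hyperlink{l1}{1}; the exact stabilization $Z_{k+1}=Z_k$ off $\varLambda_{k+1}$ comes for free from the support control in that lemma, and the linearity of the operator $\varPhi$ is what shows $Z_k-Z_{k-1}$ is small on the overlap where two corrections act. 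Your version instead treats Theorems \hyperlink{th4}{4} and \hyperlink{th2}{2} as black boxes and buys the stabilization property (c) with an extra pasting per step; this is more modular and easier to audit, at the cost of two applications of the pasting machinery per stage. One point you must make explicit: conclusion (3) of Theorem \hyperlink{th2}{2} (smoothness in the transition annulus) is itself proved in the paper by invoking Theorem \hyperlink{th5}{5}, so using it here would be circular. Your argument survives because you never need it --- in the first pasting the global smoothness of $\hat Z$ comes from taking $s=\infty$, and in the second you only use conclusions (1), (2) and (4) --- but this should be stated.

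The genuine gap is Case 2. Your mechanism requires, at each stage, a compact set $K$ with an open neighbourhood $U$ such that $U\setminus K$ is connected and the whole transition is confined to $\varOmega$. When $\varOmega=\zeta(\partial N\times]0,\epsilon[)$ is the interior of a collar and $K$ is a compact sub-collar $\zeta(\partial N\times[\delta,\epsilon-\delta])$, \emph{every} open neighbourhood $U$ of $K$ contained in $\varOmega$ has $U\setminus K$ with at least two components, one at each end; so Theorem \hyperlink{th2}{2} is never applicable with such a $K$, and the proposed remedy of one-sided exhaustions applied to each boundary component separately does not resolve this, because the constraint $Z=X$ on $\varOmega^c$ forces a transition region at \emph{both} ends of the collar simultaneously at every stage. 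What saves the paper here is precisely that it works below the level of the pasting theorem: the divergence correction is performed by Lemma \hyperlink{l1}{1} on each connected overlap annulus separately, and the flux condition $\int h\,\omega=0$ holds on each such annulus individually because every hypersurface $\zeta(\partial N\times\delta)$ bounds a compact region of $M$ across which the divergence-free fields have zero total flux. To repair your Case 2 you would either have to descend to Lemma \hyperlink{l1}{1} in the same way, or first establish a variant of Theorem \hyperlink{th2}{2} allowing $U\setminus K$ to have two components when the flux obstruction vanishes on each.
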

\begin{proof}
(Case $\varOmega=\text{int}\,N$). Since the norms $\left|\cdot\right|_{r}$
and $\left\Vert \cdot\right\Vert _{C^{r}}$ are equivalent we work
with the former. Fix a $C^{2}$ collar embedding
\[
\zeta:\partial N\times[0,\infty[\hookrightarrow N
\]
Consider the open covering of $\varOmega=\text{int}\,N$ by overlapping
``annuli'' given by 
\[
\begin{cases}
\varLambda_{0}=\varOmega\setminus\zeta\bigl(\partial N\times]0,\frac{1}{3}]\bigr)\\
\varLambda_{k}=\zeta\bigl(\partial N\times]\frac{1}{2k+3},\frac{1}{2k}[\bigr), & k\geq1
\end{cases}
\]
and fix a smooth partition of unity $\xi_{k\geq0}$ of $\varOmega$
subordinate to it ($\varLambda_{0}$ is actually a $C^{2}$-isotopic
copy of $\varOmega$). Let 
\[
\varOmega_{k}=\varLambda_{k}\cap\varLambda_{k+1}
\]
Note that $\xi_{k}+\xi_{k+1}=1$ in $\varOmega_{k}$ by subordination
to the covering (we suggest to the reader the drawing of a figure).
Given $X\in\mathfrak{X}_{\mu}^{r}(M)$ and $\epsilon>0$ we shall
construct a sequence $Z_{k\geq0}\in\mathfrak{X}_{\mu}^{r}(M)$ such
that, for $k\geq0$
\begin{enumerate}
\item $Z_{k+1}=Z_{k}$ in $\varLambda_{k+1}^{c};$
\item $Z_{k}$ is $C^{\infty}$ in $(\varLambda_{0}\cup\cdots\cup\varLambda_{k})\setminus\varOmega_{k}$;
\item $Z_{k}=X$ in $(\varLambda_{0}\cup\cdots\cup\varLambda_{k})^{c}$;
\item $\left|Z_{0}-X\right|_{r}<{\textstyle \epsilon/2}$ and $\left|Z_{k+1}-Z_{k}\right|_{r}<\epsilon/2^{k+2}$.
\end{enumerate}
It follows that $Z_{k}$ is a Cauchy sequence converging to $Z\in\mathfrak{X}_{\mu}^{r}(M)$
in the Banach space $\mathfrak{X}_{\mu}^{r}(M)$, satisfying
\begin{itemize}
\item $Z$ is $C^{\infty}$ in $\varOmega=\overset{\infty}{\cup}\varLambda_{k}$;
\item $Z=X$ in $\varOmega^{c}=\overset{\infty}{\cap}(\varLambda_{0}\cup\cdots\cup\varLambda_{k})^{c}$;
\item $\left|Z-X\right|_{r}<\epsilon$.
\end{itemize}
(\textbf{A}. Construction of $Z_{0}$). Let $\widehat{Z_{0}}=\widehat{\xi}_{1}X+\xi_{0}X_{0}\in\mathfrak{X}^{r}(M)$
where $\widehat{\xi}_{1}=\xi_{1}$ in $\varLambda_{0}$ and $\widehat{\xi}_{1}=1$
elsewhere. Here $X_{0}\in\mathfrak{X}_{\mu}^{\infty}(M)$ is a vector
field whose $C^{r}$-closeness to $X$ will be determined below. Note
that $\widehat{Z_{0}}$ is divergence-free in $\varOmega_{0}^{c}$.
Actually, by subordination of the partition to the covering, there
is an open set 
\[
\varOmega_{0}^{*}=\zeta\bigl(\partial N\times]{\textstyle \frac{1}{3}}+\delta_{0},{\textstyle \frac{1}{2}-\delta}_{0}[\bigr)
\]
where $0<\delta_{0}<1/12$, such that $\overline{\varOmega_{0}^{*}}\subset\varOmega_{0}$
and $\text{supp}\,h_{0}\subset\varOmega_{0}^{*}$ for $h_{0}:=\text{div}\widehat{Z_{0}}$.
Observe that $\varOmega_{0}$ and $\varOmega_{0}^{*}$ are connected
($\partial N$ being connected) with $C^{2}$ boundary, thus by the
divergence theorem \cite[p.203]{LA},

\[
\begin{array}{lll}
\int_{\varOmega_{0}}h_{0}\omega=\int_{\partial\varOmega_{0}}\widehat{Z_{0}}\lrcorner\,\omega & =\underset{\,}{-\int_{\partial N_{0}}X_{0}\lrcorner\,\omega+\int_{\partial N_{0}^{*}}X\lrcorner\,\omega}\\
 & =\underset{\,}{-\int_{\text{int}\,N_{0}}\text{(div}\,X_{0})\omega+\int_{\text{int}\,N_{0}^{*}}\text{(div}\,X)\omega}\\
 & =-0+0=0
\end{array}
\]
since $\partial\varOmega_{0}=\partial N_{0}\sqcup\partial N_{0}^{*}$,
where for $k\geq0$,
\[
N_{k}=\varOmega\setminus\zeta\bigl(\partial N\times]0,{\textstyle \frac{1}{2k+2}}[\bigr)\text{ \,\,and \,\,}N_{k}^{*}=\varOmega\setminus\zeta\bigl(\partial N\times]0,{\textstyle \frac{1}{2k+3}}[\bigr)
\]
are manifolds $C^{2}$-isotopic to $N$. By Lemma \hyperlink{l1}{1}
(below), there is a constant $C=C(r,\varOmega_{0}^{*},\varOmega_{0})>0$
and $v_{0}=\varPhi(h_{0})\in\mathfrak{X}^{r}(M)$ such that
\[
\begin{cases}
\text{div}\,v_{0}=h_{0}\\
\text{supp}\,v_{0}\subset\varOmega_{0}\\
\left|v_{0}\right|_{r}\leq C\left|h_{0}\right|_{r}
\end{cases}
\]
Then, 
\[
Z_{0}=\widehat{Z_{0}}-v_{0}=\widehat{Z_{0}}-\varPhi(\text{div\,}\widehat{Z_{0}})\in\mathfrak{X}_{\mu}^{r}(M)
\]
is $C^{\infty}$ in $\varLambda_{0}\setminus\varOmega_{0}$ and $Z_{0}=X$
in $\varLambda_{0}^{c}$. Moreover, it is easily seen that if $|X_{0}-X|_{r}$
is small then $|\widehat{Z_{0}}-X|_{r}$, $|h_{0}|_{r}$ and consequently
$|v_{0}|_{r}$ are all small (see Section \hyperlink{se2.3}{2.3}
below), hence for $X_{0}$ sufficiently $C^{r}$-close to $X$,
\[
\left|Z_{0}-X\right|_{r}=\big|\widehat{Z_{0}}-v_{0}-X\big|_{r}\leq\big|\widehat{Z_{0}}-X\big|_{r}+\left|v_{0}\right|_{r}<{\textstyle \epsilon/2}
\]
(\textbf{B}. Construction of $Z_{1}$). Let
\[
\widehat{Z_{1}}=\widehat{\xi}_{2}X+\xi_{1}X_{1}+\xi_{0}X_{0}\in\mathfrak{X}^{r}(M)
\]
where $\widehat{\xi}_{2}=\xi_{2}$ in $\varLambda_{0}\cup\varLambda_{1}$
and $\widehat{\xi}_{2}=1$ elsewhere. Again, $X_{1}\in\mathfrak{X}_{\mu}^{\infty}(M)$
is a vector field whose $C^{r}$-closeness to $X$ is to be specified.
Now,\smallskip{}

(a)~~$\widehat{Z_{1}}$ is divergence-free in $(\varOmega_{0}\cup\varOmega_{1})^{c}$;\smallskip{}

(b)~~$\widehat{Z_{1}}=Z_{0}$ in $\varLambda_{1}^{c}$;\smallskip{}

(c)~~$\widehat{Z_{1}}$ is $C^{\infty}$ in $(\varLambda_{0}\cup\varLambda_{1})\setminus\varOmega_{1}$;\smallskip{}

(d)~~$\widehat{Z_{1}}=X$ in $(\varLambda_{0}\cup\varLambda_{1})^{c}$.\smallskip{}

Using Lemma \hyperlink{l1}{1}, we proceed exactly as in (A) to eliminate
the divergence of $\widehat{Z_{1}}$ inside $\varOmega_{0}$ and $\varOmega_{1}$,
while keeping this vector field unchanged in $(\varOmega_{0}\cup\varOmega_{1})^{c}$,
thus obtaining $Z_{1}\in\mathfrak{X}_{\mu}^{r}(M)$ as $C^{r}$-close
to $Z_{0}$ as desired and still satisfying (b) - (d) above (to establish
$\int_{\varOmega_{1}}h_{1}\omega=0$ where $h_{1}=\text{div}$$\widehat{Z_{1}}|_{\varOmega_{1}}$,
we now use $\partial\varOmega_{1}=\partial N_{1}\sqcup\partial N_{1}^{*}$
in order to apply the divergence theorem). 

As $\widehat{Z_{1}}=Z_{0}$ in $\varLambda_{1}^{c}$ and $\widehat{Z_{1}}=X_{1}$
and $Z_{0}=X$ in $\varLambda_{1}\setminus(\varOmega_{0}\cup\varOmega_{1})$,
and since we can take $X_{1}$ as $C^{r}$-close to $X$ as desired,
we need only to guarantee that $Z_{1}$ is as $C^{r}$-close to $Z_{0}$
as wished in $\varOmega_{0}\cup\varOmega_{1}$. With $\varOmega_{1}$
there is no concern, the situation being exactly the same as in (A).
To see that for $X_{1}$ $C^{r}$-close to $X$, one has $Z_{1}$
$C^{r}$-close to $Z_{0}$ in $\varOmega_{0}$ we use the linearity
of the operator $\varPhi:h\mapsto v$ in Lemma \hyperlink{l1}{1}.
In first place note that since $X_{0}$ and $X_{1}$ are both smooth
and in $\varOmega_{0}$ we have $\widehat{Z_{1}}=\xi_{1}X_{1}+\xi_{0}X_{0}$,
then in $\varOmega_{0}$,
\[
Z_{1}=\widehat{Z_{1}}-\varPhi(\text{div}\,\widehat{Z_{1}})=\xi_{1}X_{1}+\xi_{0}X_{0}-\varPhi(\text{div}\,(\xi_{1}X_{1}+\xi_{0}X_{0}))
\]
is also smooth. On the other hand, using the linearity of the divergence
and that of the operator $\varPhi$, writing $X_{1}=X+(X_{1}-X)$
we have in $\varOmega_{0}$,
\[
Z_{1}=A+B
\]
where
\[
A=\xi_{1}X+\xi_{0}X_{0}-\varPhi\big(\text{div}\,(\xi_{1}X+\xi_{0}X_{0})\big)
\]
and
\[
B=\xi_{1}(X_{1}-X)-\varPhi\big(\text{div}\,(\xi_{1}(X_{1}-X))\big)
\]
Now, since $\widehat{\xi}_{1}=\xi_{1}$ in $\varOmega_{0}$, one has
\[
A=\widehat{Z_{0}}-\varPhi(\text{div}\,\widehat{Z_{0}})=Z_{0}
\]
while (on local charts),
\[
B=\xi_{1}(X_{1}-X)-\varPhi\Bigg(\sum_{i\leq n}\partial_{i}\xi_{1}(X_{1}^{i}-X^{i})\Bigg)
\]
is $C^{r}$-small if $|X_{1}-X|_{r}$ is small. Therefore, $(Z_{1}-Z_{0})|_{\varOmega_{0}}$
is as $C^{r}$-small as wished provided $|X_{1}-X|_{r}$ is small
enough.\smallskip{}

\noindent (\textbf{C}. Construction of $Z_{k}$, $k\geq2$). Proceeding
exactly in the same way as in (B), we let
\[
\widehat{Z_{k}}=\widehat{\xi}_{k+1}X+\xi_{k}X_{k}+\cdots+\xi_{0}X_{0}\in\mathfrak{X}^{r}(M)
\]
where $X_{k}\in\mathfrak{X}_{\mu}^{\infty}(M)$ is as $C^{r}$-close
to $X$ as needed below and 
\[
\widehat{\xi}_{k+1}=\begin{cases}
\xi_{k+1} & \text{in }\varLambda_{0}\cup\cdots\cup\varLambda_{k}\\
1 & \text{elsewhere }
\end{cases}
\]
and then cancel the divergence inside $\varOmega_{k}$ and $\varOmega_{k-1}$
using Lemma \hyperlink{l1}{1}. Reasoning as in (B), we need only
to guarantee that $Z_{k}$ is as $C^{r}$-close to $Z_{k-1}$ as wished
in $\varOmega_{k-1}$. Again, the fact that $\widehat{\xi}_{k}$ and
$\xi_{k}$ coincide in $\varOmega_{k-1}$ guarantees that in this
set,
\[
Z_{k}=\widehat{Z_{k}}-\varPhi(\text{div}\,\widehat{Z_{k}})=Z_{k-1}+B
\]
where $B$ is $C^{r}$-small if $|X_{k}-X|_{r}$ is small, and consequently,
as in (B), $|Z_{k}-Z_{k-1}|_{r;\varOmega_{k-1}}$ is as small as desired
and it straightforward to verify that $Z_{k}$ satisfies (1) - (4)
above.\medskip{}

(Case $\varOmega=\,\,$interior of a collar). The proof is the one
given above, modulo the following simple change: we fix a $C^{2}$
compact collar embedding $\zeta:\partial N\times[0,\epsilon]\hookrightarrow N$,
and define, as in the previous case, a sequence $\varLambda_{k}$
of overlapping ``annuli'' now indexed by $\mathbb{Z}$, forming
an open cover of $\varOmega=\zeta(\partial N\times]0,\epsilon[)$,
with $\varLambda_{k}$ approaching $\partial N$ and $\zeta(\partial N\times\epsilon)$
as $k$ tends to $\infty$ and $-\infty$, respectively. The construction
is then essentially the same, noting that the hipersurfaces $\zeta(\partial N\times\delta)$,
$\delta\in]0,1[$, are of class $C^{2}$, thus the divergence theorem
applies when needed.
\end{proof}
The next result shows that if in Theorem \hyperlink{th3}{3} we want
to have $Z$ satisfying (1) and (3) but are not particularly interested
in having (2) $Z=X$ in $U^{c}$, then the regularity of $Z$ can
be increased to that of $Y$ and it can actually be made $C^{\infty}$
in $U^{c}$. 

\hypertarget{C1}{}
\begin{cor}
\emph{($C^{s,\beta}$ conservative extension with $C^{r}$-closeness)}.
Let $M$ be a manifold as above. Suppose that $K$ is a compact subset
with an open neighbourhood $U\subsetneq M$ such that $U\setminus K$
is connected. Then, given $s\in\mathbb{Z}^{+}\cup\{\infty\}$, $0\leq\beta\leq1$,
and an integer $1\leq r\leq s$, there is an open set $K\subset V\subset U$
and a constant $C=C(r,K,U)>1$ (that of Theorem \hyperlink{th3}{3})
such that: given $X\in\mathfrak{X}_{\mu}^{r}(M)$ and a $C^{r}$-bounded
$Y\in\mathfrak{X}_{\mu}^{s,\beta}(U)$ such that $Y\not\equiv X|_{U}$,
there exists $Z\in\mathfrak{X}_{\mu}^{s,\beta}(M)$ satisfying:

\begin{enumerate}
\item $Z=Y$ in $\overline{V}$;
\item $Z$ is $C^{\infty}$ in a neighbourhood of $U^{c}$;
\item $\left\Vert Z-X\right\Vert _{C^{r}}\leq C\left\Vert Y-X\right\Vert _{C^{r};U}$
\end{enumerate}
Furthermore, if $\beta=0$, then Z is $C^{\infty}$ in $\overline{V}^{c}$.
\end{cor}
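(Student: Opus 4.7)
The plan is to combine Theorem \hyperlink{th4}{4} (regularization) with the H\"older pasting of Theorem \hyperlink{th3}{3} (in its precise form Theorem \hyperlink{th2}{2}). Because $X$ is only $C^r$, a direct application of the pasting to $(X,Y)$ would give $Z$ of class only $C^r$; the remedy is to first approximate $X$ by a smooth divergence-free field $X'$ in the $C^r$ topology, and then paste $X'$ (which is $C^\infty$) with $Y$ (which is $C^{s,\beta}$). The output $Z$ will then automatically lie in $\mathfrak{X}_\mu^{s,\beta}(M)$ and be smooth wherever it coincides with $X'$.

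Set $\delta_0 := \|Y - X\|_{C^r;U}$, positive by the hypothesis $Y \not\equiv X|_U$. For an $\epsilon > 0$ to be fixed below, I use Theorem \hyperlink{th4}{4} to pick $X' \in \mathfrak{X}_\mu^\infty(M)$ with $\|X' - X\|_{C^r} < \epsilon$. I then apply Theorem \hyperlink{th3}{3}, in the precise form of Theorem \hyperlink{th2}{2}, to the pair $(X',Y)$ with H\"older exponent $\alpha = 0$ (the required condition $r + \alpha \leq s + \beta$ reducing to $r \leq s$). This produces a constant $C_0 = C_0(r,K,U) > 1$, two disjoint compact $n$-submanifolds $Q \supset K$ and $S \supset U^c$, and $Z \in \mathfrak{X}_\mu^{s,\beta}(M)$ such that $Z = Y$ on $Q$, $Z = X'$ on $S$, $Z$ is $C^\infty$ on $\Omega := (Q \cup S)^c$ (Theorem \hyperlink{th3}{3}'s exceptional case $r=s$, $0 < \alpha = \beta \leq 1$ is ruled out by $\alpha = 0$), and $\|Z - X'\|_{C^r} \leq C_0 \|Y - X'\|_{C^r;U}$. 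Setting $V := \text{int}\,Q$ gives $K \subset V \subset U$ with $\overline{V} = Q$; hence $Z = Y$ on $\overline{V}$, establishing (1), and $Z = X' \in C^\infty$ on the open neighbourhood $\text{int}\,S$ of $U^c$, establishing (2).

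For the estimate (3), two triangle inequalities yield
\[
\|Z - X\|_{C^r} \;\leq\; C_0 \|Y - X\|_{C^r;U} + (C_0+1)\epsilon,
\]
so choosing $\epsilon$ with $(C_0+1)\epsilon \leq C_0\,\delta_0$ (legitimate since $\delta_0 > 0$) gives (3) with constant $C := 2C_0$, which depends only on $r$, $K$, $U$. For the final assertion, $\overline{V}^c = \Omega \cup S$ with $Z$ smooth on $\Omega$ by construction and $Z = X'$ smooth on $S$, so $Z$ is $C^\infty$ on $\overline{V}^c$ (in fact for every $\beta \in [0,1]$, since Theorem \hyperlink{th3}{3}'s exception requires $\alpha > 0$). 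The main subtlety is matching the constant of (3) \emph{literally} with that of Theorem \hyperlink{th3}{3}: since $\epsilon > 0$ cannot be taken to zero, one has to accept a constant of the same functional form $C(r,K,U)$ rather than the exact value $C_0$; the positivity $\delta_0 > 0$ granted by $Y \not\equiv X|_U$ is exactly what makes the absorption of the $(C_0+1)\epsilon$ error term possible.
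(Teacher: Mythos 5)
Your overall strategy -- first replace $X$ by a smooth divergence-free $X'$ via Theorem 4, then paste $X'$ with $Y$ via Theorem 2/3 with $\alpha=0$ -- is exactly the paper's route, and your treatment of conclusions (1)--(3) is sound. Two remarks on (3): the statement asks for literally the constant $C$ of Theorem 3, and the paper gets it (rather than your $2C_{0}$) by exploiting the built-in slack of Remark 4 (the estimate of Theorem 3 already holds with $C-1$ in place of $C$) together with the choice $\|X'-X\|_{C^{r}}\leq\frac{1}{2C}\|Y-X\|_{C^{r};U}$; this is only bookkeeping, and you correctly flag it.

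The genuine gap is in the final assertion. You claim that $Z$ is $C^{\infty}$ on $\overline{V}^{c}=\varOmega\cup S$ because it is smooth on the open transition annulus $\varOmega$ and equals the smooth $X'$ on $S$. This does not follow: $\partial S\subset\overline{V}^{c}$, and a globally $C^{s,\beta}$ field that is smooth on an open set $\varOmega$ and coincides with a smooth field on the adjacent closed set $S$ need not be smooth at the interface $\partial S$ (compare $f(x)=0$ for $x\leq0$, $f(x)=x^{s+1}$ for $x>0$, which is $C^{s}$, smooth on each side, but not $C^{s+1}$ at $0$). Indeed, the smoothing in Theorem 5 that produces conclusion (3) of Theorem 2 only guarantees smoothness in the \emph{open} collar $\varOmega$; the perturbations accumulate at $\partial\varOmega$ and the higher derivatives of $Z$ need not extend continuously to $\partial S$, even though $Z=X'$ there. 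The paper avoids this by not relying on the transition-annulus smoothing at all for the last claim: when $\beta=0$ it applies Theorem 5 once more, to the pre-smoothing field $Z_{0}$ and to $\varOmega=\overline{V}^{c}$, the interior of the compact $n$-submanifold $N=V^{c}$ with connected boundary $\partial Q$, which yields smoothness on the whole of $\overline{V}^{c}$ in one stroke (the extra $C^{r}$-perturbation being absorbed by the margin left in the constant). For the same reason your parenthetical claim that the final assertion holds for every $\beta\in[0,1]$ is unsupported; the paper restricts it to $\beta=0$ precisely because the localized smoothing argument is a $C^{r}$ (not $C^{s,\beta}$, $\beta>0$) approximation procedure.
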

\begin{proof}
Fix $\widehat{X}\in\mathfrak{X}_{\mu}^{\infty}(M)$ such that 
\begin{equation}
\|\widehat{X}-X\|_{C^{r}}\leq{\textstyle \frac{1}{2C}}\|Y-X\|_{C^{r};U}
\end{equation}
where $C=C(r,K,U)>1$ is the constant given in Theorem \hyperlink{th3}{3}.
By the observation preceding that result, there is a compact \textbf{$n$-}submanifold\textbf{
$Q\subset U$ }with smooth connected boundary such that $K\subset\text{int\,}Q$
and a vector field $Z_{0}\in\mathfrak{X}_{\mu}^{s,\beta}(M)$ such
that $Z_{0}=Y$ in $Q$, $Z_{0}=\widehat{X}$ in a neighbourhood of
$U^{c}$ and 
\[
\|Z_{0}-\widehat{X}\|_{C^{r}}\leq C\|Y-\widehat{X}\|_{C^{r};U}
\]
By Remark \hyperlink{r4}{4} (Section \hyperlink{se3.1}{3.1}), we
may replace constant $C$ by $C-1$ in the inequality above and get
\[
\|Z_{0}-\widehat{X}\|{}_{C^{r}}\leq(C-1)\|Y-\widehat{X}\|{}_{C^{r};U}\leq(C-1)\left(\|Y-X\|{}_{C^{r};U}+\|X-\widehat{X}\|_{C^{r}}\right)
\]
Combining with (2.5), 
\[
\begin{array}{lll}
\|Z_{0}-X\|{}_{C^{r}} & \leq & \|Z_{0}-\widehat{X}\|{}_{C^{r}}+\|\widehat{X}-X\|_{C^{r}}\\
 & \leq & (C-1/2)\|Y-X\|_{C^{r};U}
\end{array}
\]
Let $V=\text{int}\,Q$. If $\beta>0$, then $Z=Z_{0}$ is the desired
vector field. If $\beta=0$, we get $Z$ as wished applying Theorem
\hyperlink{th5}{5} to $Z_{0}\in\mathfrak{X}_{\mu}^{s}(M)$ and $\varOmega=\overline{V}^{c}$,
the interior of $N=V^{c}$, a compact $n$-submanifold with smooth
connected boundary $\partial N=\partial Q$. 
\end{proof}
\hypertarget{se2.2}{}

\subsection{Conservative local linearization.}

Theorem \hyperlink{th3}{3} can be also used to prove that a divergence-free
vector field can be conservatively $C^{1}$-perturbed to become linearized
near $x\in M$, the perturbation support being a neighbourhood of
$x$ as small as pleased. Although the main application occurs when
the points of $\varSigma$ are singularities of $v$, we formulate
it in the general case. Special care has been taken to find a $\delta$
that directly estimates the permitted variation of the derivative
on all local charts. Observe that given $\epsilon>0$, the same $\delta$
(depending linearly on $\epsilon$) works simultaneously for all divergence-free
vector fields on $M$ in all classes of regularity (c.f. Theorem \hyperlink{th7}{7}
below).

\hypertarget{th6}{}
\begin{thm}
\emph{($C^{s,\beta}$ conservative local linearization - ``Franks
lemma type'')}. Let $M$ be a manifold as above. Then, there is a
constant $\chi>0$ (depending only on the atlas of $M$) such that:
given

- any $\epsilon>0$;

- any $v\in\mathfrak{X}_{\mu}^{s,\beta}(M)$, $s\in\mathbb{Z}^{+}\cup\{\infty\}$,
$0\leq\beta\leq1$;

- any finite set $\varSigma\subset M$;

- any neighbourhood $U$ of $\varSigma$;

- any traceless linear maps $A_{x}\in L(n,\mathbb{R})$, $x\in\varSigma$,
satisfying 
\[
\left\Vert A_{x}-Dv(x)\right\Vert <\chi\epsilon
\]
where $Dv(x)$ is taken in some (reindexed) local chart $(V_{x},\phi_{x})$
around $x$, there exists $Z\in\mathfrak{X}_{\mu}^{s,\beta}(M)$ satisfying:

\begin{enumerate}
\item for each $x\in\varSigma$, on local chart $(V_{x},\phi_{x})$,\\
$Z(y)=v(x)+A_{x}(y-x)$ ~near $x$;
\item $Z=v$ in $U^{c}$;
\item $\left\Vert Z-v\right\Vert _{C^{1}}<\epsilon$.
\end{enumerate}
\end{thm}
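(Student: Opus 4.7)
The plan is to reduce Theorem 6 to the conservative pasting lemma (Theorem 3) via a homothety trick that keeps the pasting constant universal. Since $\varSigma$ is finite, one handles each point independently inside pairwise disjoint chart neighbourhoods contained in $U$, so fix a single $x\in\varSigma$ and its conservative local chart $(V_x,\phi_x)$, reindexed so that $\phi_x(x)=0$, and work in chart coordinates throughout; a small scaling parameter $r>0$ will be chosen at the end.

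The homothety trick transfers the problem to the unit ball $B_1\subset\mathbb{R}^n$ via
\[
\hat v_r(y)\;=\;\frac{v(ry)-v(0)}{r},\qquad y\in B_1.
\]
Its crucial property is $D\hat v_r(y)=Dv(ry)$, so the rescaling preserves derivatives: $\hat v_r$ is divergence-free, $C^1$-bounded on $B_1$ uniformly in $r$, and $D\hat v_r(0)=Dv(0)$. The affine target $v(0)+Ay$ rescales to $Ay$, also divergence-free since $A$ is traceless. A direct mean-value estimate yields
\[
\bigl\|Ay-\hat v_r\bigr\|_{C^1;B_1}\;\leq\;2\bigl(\|A-Dv(0)\|+\omega_{Dv}(r)\bigr),
\]
where $\omega_{Dv}$ is the modulus of continuity of $Dv$ in the chart.

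Now apply Theorem 3 on a fixed ambient conservative closed manifold in which $B_1$ sits as a chart (for instance, the flat $n$-torus obtained from a large cube) with the fixed sets $K=\overline{B_{1/3}}\subset U'=B_{2/3}$, after divergence-freely extending $\hat v_r$ and $Ay$ past $B_1$ (any standard extension works since these fields are smooth near $\partial B_1$). The resulting $Z_r$ is of class $C^{s,\beta}$, divergence-free, equals $Ay$ in a neighbourhood of $K$, equals $\hat v_r$ outside $U'$, and satisfies
\[
\|Z_r-\hat v_r\|_{C^1;B_1}\;\leq\;2C_0\bigl(\|A-Dv(0)\|+\omega_{Dv}(r)\bigr)
\]
for a \emph{universal} constant $C_0=C(1,K,U')$ depending only on the reference setup. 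Unrescale to define $Z$ on $M$ by $Z=v$ outside $\phi_x^{-1}(B_r)$ and, in chart coordinates on $B_r$, $Z(z)=v(0)+rZ_r(z/r)$. Because $Z_r=\hat v_r$ near $\partial B_1$, the gluing is seamless, so $Z\in\mathfrak{X}_{\mu}^{s,\beta}(M)$; by construction conclusions (1) and (2) hold for $r$ small enough that $\phi_x^{-1}(B_r)\subset U$. Since the homothety preserves derivatives, $\|DZ-Dv\|_{C^0;B_r(x)}=\|DZ_r-D\hat v_r\|_{C^0;B_1}$ and $\|Z-v\|_{C^0;B_r(x)}=r\|Z_r-\hat v_r\|_{C^0;B_1}$. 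Taking $\chi:=1/(4C_0)$ (the minimum such reciprocal over the finite atlas), the hypothesis forces $2C_0\|A-Dv(0)\|<\epsilon/2$, and $r$ small enough gives $2C_0\omega_{Dv}(r)<\epsilon/2$, yielding (3).

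The main obstacle is precisely what motivates the homothety: a naive application of Theorem 3 directly to $v$ and the affine field on $\phi_x^{-1}(B_r)$ fails because the pasting constant $C\bigl(1,\phi_x^{-1}(\overline{B_{r/3}}),\phi_x^{-1}(B_{2r/3})\bigr)$ blows up like $1/r$ (cf.\ Warning 2), while the $C^1$ distance of the affine field to $v$ over $B_r(x)$ does \emph{not} tend to zero with $r$, its derivative part being at least $\|A-Dv(0)\|$. The rescaling fixes this defect by restaging the pasting on a unit-sized domain where the constant becomes universal, exploiting that $y\mapsto ry$ combined with the field rescaling $v\mapsto\hat v_r$ leaves derivatives untouched while contracting values by the factor $r$.
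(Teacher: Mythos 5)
Your proposal is correct in outline and follows essentially the same strategy as the paper: reduce to a single point, transfer the problem to the unit ball by a homothety that leaves derivatives untouched while contracting values, perform the pasting there with a \emph{fixed} constant $C_{0}=C(1,\overline{B_{1/3}},B_{2/3})$, and scale back down; your closing paragraph identifies exactly the obstruction (the blow-up of $C(1,K,U)$ as $U$ shrinks, cf.\ Warning 2) that motivates the trick. The only substantive difference is that the paper rescales the \emph{difference} $Y(y)=v(0)+A(y)-v(y)$ and pastes $Y_{\lambda}$ against the zero field directly on the open ball $\mathbb{B}^{n}$ (invoking Remark 3, which extends Theorem 3 to connected open subsets of $\mathbb{R}^{n}$ with the trivial one-chart atlas), whereas you rescale $v$ itself and paste $\hat v_{r}$ against $Ay$ after embedding $B_{1}$ into a flat torus. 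By linearity the two pastings are equivalent, but your route forces you to extend $\hat v_{r}$ divergence-freely past $B_{1}$, a step you assert without justification (it is true --- write $\hat v_{r}\lrcorner\,\omega$ as $d\beta$ on the contractible ball and cut off the potential $\beta$ --- but it is an avoidable detour, and ``these fields are smooth near $\partial B_{1}$'' is not accurate since $\hat v_{r}$ is only $C^{s,\beta}$).

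There is one point you should repair. Your final estimate is obtained in the single chart $(V_{x},\phi_{x})$, but conclusion (3) is the global Whitney $C^{1}$ norm, i.e.\ the supremum over \emph{all} chart expressions of $Z-v$; since $\mathrm{supp}(Z-v)$ may meet several chart domains, the local bound must be pushed through the transition maps, which can magnify the $C^{1}$ norm by a factor $d=d(1,\max_{i,j}\Vert\phi_{ji}\Vert_{C^{2}})\geq1$. The paper accordingly sets $\chi=1/(Cd)$; your $\chi=1/(4C_{0})$, with the parenthetical ``minimum over the finite atlas'' left unexplained, does not account for this, and as written the global estimate (3) is not guaranteed. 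Replacing your constant by $\chi=1/(4C_{0}d)$ closes the gap.
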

\hypertarget{r2}{}
\begin{rem}
(1) implies, for each $x\in\varSigma$, that $Z(x)=v(x)$, and on
local chart $(V_{x},\phi_{x})$, $DZ(x)=A_{x}$ and $Z$ is affine
linear near $x$. 
\end{rem}
\emph{Proof's preview.} The attentive reader will notice that Theorem
\hyperlink{th6}{6} is \emph{not }a particular case of Theorem \hyperlink{th3}{3}.
The result easily reduces to the case $\varSigma$ consists of a single
point. The problem is obviously a local one, the construction being
carried out on some chosen local chart (performing a translation we
may assume that $x=0$). Instead of trying to prove directly that,
for any traceless $A\in L(n,\mathbb{R})$ sufficiently close to $Dv(0)$,
pasting adequately $Y(y)=v(0)+A(y)$ to $v$ on a sufficiently small
neighbourhood $U$ of $x$ (using Theorem \hyperlink{th3}{3}) we
can get a divergence-free vector field $C^{1}$ close to $v$, with
the inherent problem of controlling the growth of constant $C=C(1,K,U)$
as $U$ ``blows down'' to $x,$ we proceed differently and re-scale
to the open unit ball $\mathbb{B}^{n}$, the restrictions of vector
fields $Y$ and $v$ to arbitrarily small balls $\lambda\mathbb{B}^{n}$
(under the action of homotheties $\varPhi_{\lambda}=\lambda^{-1}\mbox{Id}$).
Observing that the $C^{1}$ norm of the vector field 
\[
Y_{\lambda}-v_{\lambda}=\varPhi_{\lambda_{*}}(Y-v)\in\mathfrak{X}_{\mu}^{s,\beta}(\mathbb{B}^{n})
\]
tends to $\|A-Dv(0)\|$ as $\lambda\rightarrow0$, we perform the
pasting on this constant scale, with fixed $K$, $U$ and $C=C(1,K,U)$
and then pullback (scale down) the resulting vector field to the original
real scale, i.e. to a sufficiently small ball $\lambda\mathbb{B}^{n}$,
finally extending it by $v$ to the whole $M$, the non increasing
behaviour of the $C^{1}$ norm under the action of homothetic contractions
guaranteeing the desired conclusion.

\hypertarget{r3}{}
\begin{rem}
In the proof of Theorem \hyperlink{th6}{6} we will need to apply
Theorem \hyperlink{th3}{3} with $M$ an open ball $\eta\mathbb{B}^{n}\subset\mathbb{R}^{n}$.
Obviously, Theorem \hyperlink{th3}{3} remains valid if the manifold
$M$ is instead a connected open subset of $\mathbb{R}^{n}$ equipped
with the trivial one chart atlas $(M,\mbox{Id})$ and both $X,\,Y\in\mathfrak{X}_{\mu}^{s,\beta}(M)$
are $C^{r}$-bounded (see Definition \hyperlink{def1}{1}), $\mu$
being the Lebesgue measure induced by the the canonical volume on
$\mathbb{R}^{n}$. 
\end{rem}
\begin{proof}
(Theorem \hyperlink{th6}{6}). Choose a local chart around each $x\in\varSigma$
and fix on it a small closed ball $\overline{B_{x}}$ centred at $x$
(we identify $x$ with its image on the chart), so that these balls
have disjoint preimages on $M$ and are contained in $U$. Changing
$U$ by the union of the interiors of these $\#\varSigma$ balls it
is immediate that the proof reduces to the case of $\varSigma$ consisting
of a single point $x$. Let $d=d(1,\mbox{max}_{i,j\leq m}\left\Vert \phi_{ji}\right\Vert _{C^{2}})\geq1$
be the constant controlling the potential magnification of the local
$C^{1}$ norm of a vector field under the chart transitions of the
atlas (see , Section \hyperlink{3.1c}{3.1(c)}). Get constant $C=C(1,{\textstyle \frac{1}{3}}\mathbb{D}^{n},{\textstyle \frac{2}{3}}\mathbb{B}^{n})$
given by Theorem \hyperlink{th3}{3} for $M=\mathbb{B}^{n}$ taking
Remark \hyperlink{r3}{3} into consideration and let $\chi=1/(Cd)$.
Take a local chart $(W,\,\phi)$ around $x$. Performing a translation
we may assume that $\phi(x)=0\in\mathbb{R}^{n}$. Take $\eta>0$ such
that $\eta\mathbb{D}^{n}\subset\phi(W)$ and $\phi^{-1}(\eta\mathbb{D}^{n})\subset U$.
To simplify the notation we still denote by $v$ the vector field
$\phi_{*}v|_{W}\in\mathfrak{X}_{\mu}^{s,\beta}(\phi(W))$ (recall
that the atlas is regular (see the Convention, Section \hyperlink{se2}{2}),
hence this local chart expression of $v$ is $C^{1}$-bounded; $\mu$
is now the Lebesgue measure on $\mathbb{R}^{n}$). Fix any traceless
$A\in L(n,\mathbb{R})$ such that
\[
\left\Vert A-Dv(0)\right\Vert <\chi\epsilon
\]
(recall that $\phi(x)=0$ and $Dv(0)$ is taken on local chart $(W,\phi)$).
Define on $\eta\mathbb{B}^{n}$, 
\[
Y(y)=v(0)+A(y)-v(y)
\]

\noindent \noun{Homothety trick} - (Step 1). \noun{Re-scaling to the
unit scale.} For each $0<\lambda<\text{min}(1,\eta)$, re-scale $Y|_{\lambda\mathbb{B}^{n}}$
to the unit ball $\mathbb{B}^{n}$
\[
Y_{\lambda}=\left(\lambda^{-1}\mbox{Id}\right)_{*}Y|_{\lambda\mathbb{B}^{n}}\in\mathfrak{X}_{\mu}^{s,\beta}(\mathbb{B}^{n})
\]

\noindent \noun{Claim.} $\left\Vert Y_{\lambda}\right\Vert _{C^{1};\mathbb{B}^{n}}\xrightarrow[\lambda\rightarrow0]{}\left\Vert A-Dv(0)\right\Vert .$ 

\noindent Recall that $\left\Vert Y_{\lambda}\right\Vert _{C^{1};\mathbb{B}^{n}}=\mbox{max}\big(\left\Vert Y_{\lambda}\right\Vert _{C^{0};\mathbb{B}^{n}},\,\underset{x\in\mathbb{B}^{n}}{\text{sup}}\left\Vert DY_{\lambda}\right\Vert \big)$.

\noindent (a) The derivative is unchanged by the action of the homothety,
\[
DY_{\lambda}(y)=DY(\lambda y)=A-Dv(\lambda x)\;\mbox{ for all \ensuremath{y\in\mathbb{B}^{n}}}
\]
therefore, since $v$ is $C^{1}$,
\[
\underset{y\in\mathbb{B}^{n}}{\mbox{sup}}\left\Vert DY_{\lambda}\right\Vert =\underset{y\in\lambda\mathbb{B}^{n}}{\mbox{sup}}\left\Vert DY\right\Vert \xrightarrow[\lambda\rightarrow0]{}\left\Vert A-Dv(0)\right\Vert 
\]

\noindent (b) as for the $C^{0}$ norm,
\[
\left\Vert Y_{\lambda}\right\Vert _{C^{0};\mathbb{B}^{n}}=\lambda^{-1}\left\Vert Y\right\Vert _{C^{0};\lambda\mathbb{B}^{n}}\xrightarrow[\lambda\rightarrow0]{}\left\Vert A-Dv(0)\right\Vert 
\]

\noindent since
\[
\begin{array}{lll}
\lambda^{-1}\left\Vert Y\right\Vert _{C^{0};\lambda\mathbb{B}^{n}} & = & \underset{y\in\lambda\mathbb{B}^{n}}{\mbox{sup}}\lambda^{-1}\left|v(0)+A(y)-v(y)\right|=\\
 & = & \underset{y\in\lambda\mathbb{B}^{n}}{\mbox{sup}}\left|\frac{v(0)+Dv(0;y)-v(y)}{\lambda}+\frac{A(y)-Dv(0;y)}{\lambda}\right|\xrightarrow[\lambda\rightarrow0]{}\left\Vert A-Dv(0)\right\Vert 
\end{array}
\]
as it is immediate to verify: the fraction on the left converges to
$0\in\mathbb{R}^{n}$ as $\lambda\rightarrow0$, while
\[
\underset{y\in\lambda\mathbb{B}^{n}}{\mbox{sup}}\frac{\left|A(y)-Dv(0;y)\right|}{\lambda}=\underset{y\in\mathbb{B}^{n}}{\mbox{sup}}\left|A(y)-Dv(0;y)\right|=\left\Vert A-Dv(0)\right\Vert 
\]
Therefore, for $0<\lambda<\eta$ small enough
\[
\left\Vert Y_{\lambda}\right\Vert _{C^{1};\mathbb{B}^{n}}<\chi\epsilon
\]

\smallskip{}

\noindent (Step 2). \noun{Performing the pasting. }Letting $X\equiv0$
on $\mathbb{B}^{n}$, by Theorem \hyperlink{th3}{3} (and Remark \hyperlink{r3}{3}),
there is $Z_{1}\in\mathfrak{X}_{\mu}^{s,\beta}(\mathbb{B}^{n})$ such
that
\[
\begin{cases}
Z_{1}=Y_{\lambda}\quad\mbox{in}\;\frac{1}{3}\mathbb{B}^{n}\\
Z_{1}=0\quad\mbox{\,\,\ in}\;\mathbb{B}^{n}\setminus\frac{2}{3}\mathbb{B}^{n}\\
\left\Vert Z_{1}\right\Vert _{C^{1};\mathbb{B}^{n}}\leq C\left\Vert Y_{\lambda}\right\Vert _{C^{1};\mathbb{B}^{n}}<C\chi\epsilon=\epsilon/d
\end{cases}
\]

\smallskip{}

\noindent (Step 3). \noun{Scaling down to the real scale.} Pullback
$Z_{1}$ to the ``real scale'' defining
\[
Z_{0}=\left(\lambda^{-1}\mbox{Id}\right)^{*}Z_{1}\in\mathfrak{X}_{\mu}^{s,\beta}(\lambda\mathbb{B}^{n})
\]
compactly supported in $\lambda\mathbb{B}^{n}$. Extend $Z_{0}$ by
$0$ to the whole $\eta\mathbb{B}^{n}$ and define on this set, $Z=Z_{0}+v$.
Then, $Z=v(0)+A$ in $\frac{\lambda}{3}\mathbb{B}^{n}$ and $Z=v$
in $\mathbb{\eta B}^{n}\setminus\frac{2\lambda}{3}\mathbb{B}^{n}$.
Since $\lambda<1$, $Z_{1}\mapsto Z_{0}$ is a homothetic contraction,
thus the $C^{1}$ norm does not increase and 
\[
\left\Vert Z-v\right\Vert _{C^{1};\eta\mathbb{B}^{n}}=\left\Vert Z_{0}\right\Vert _{C^{1};\lambda\mathbb{B}^{n}}\leq\left\Vert Z_{1}\right\Vert _{C^{1};\mathbb{B}^{n}}<\epsilon/d
\]
We finally get the desired $Z\in\mathfrak{X}_{\mu}^{s,\beta}(M)$
extending the pullback $\phi^{*}(Z)$ by $v$ to the whole $M$. Note
that $Z-v\in\mathfrak{X}_{\mu}^{s,\beta}(M)$ is compactly supported
inside $\phi^{-1}(\eta\mathbb{B}^{n})$, thus the global $C^{1}$
norm of $Z-v$ satisfies
\[
(3)\text{ \,}\left\Vert Z-v\right\Vert _{C^{1}}\leq d\left\Vert Z_{0}\right\Vert _{C^{1};\lambda\mathbb{B}^{n}}<\epsilon
\]
and it is immediate to verify that (1) and (2) are also satisfied.
\end{proof}
\hypertarget{se2.3}{}

\subsection{Conservative pasting - Proof of Theorem 1.}

(\emph{Preview})\emph{.} Using Lemma \hyperlink{l3}{3} (Section \hyperlink{se5.3}{5.3})
and the existence of collars for manifolds with boundary, fix $W_{0}$
and $W_{1}$, two compact $n$-submanifolds with $C^{\infty}$ boundary
such that
\[
K\subset\mbox{int}\,W_{0}\mbox{, }\quad W_{0}\subset\mbox{int}\,W_{1},\mbox{ }\quad W_{1}\subset U,\mbox{ }\quad\mbox{\ensuremath{\varOmega}:=(int}\,W_{1})\setminus W_{0}\mbox{ \,\ is connected}
\]
The transition from $Y$ to $X$ will take place inside the open set
$\mbox{\ensuremath{\varOmega}}$. Fix $\xi\in C^{\infty}(M;[0,1])$
such that $\xi=1$ in a small neighbourhood of $W_{0}$ and $\xi=0$
in a small neighbourhood of $(\mbox{int}\,W_{1})^{c}$. Now given
any $X,\,Y$ as in the statement let
\[
\begin{cases}
w=\xi Y+(1-\xi)X & \mbox{ in }U\\
w=X & \mbox{ in }U^{c}
\end{cases}
\]
Note that $w\in\mathfrak{X}^{s}(M)$ since $\xi=0$ in a neighbourhood
of $U^{c}$ and $Y$ is defined and of class $C^{s}$ on $U$. Since
both $X\in\mathfrak{X}_{\mu}^{s}(M)$ and $Y\in\mathfrak{X}_{\mu}^{s}(U)$
are divergence-free, 
\[
h:=\mbox{div}\,w\in C^{s}(M)\quad\text{and}\quad h\text{ \,is \,}C^{r}\text{-small if \,}Y-X|_{U}\text{ \,is \,}C^{r}\text{-small}
\]
as $h=\mbox{div\,}X=0$ in a neighbourhood of $U^{c}$ and (on local
charts),
\begin{equation}
h=\sum_{i=1}^{n}(\partial_{i}\xi)(Y^{i}-X^{i})\;\,\mbox{in }U
\end{equation}
Clearly, $h$ is (compactly) supported inside $\varOmega$. In order
to get $Z_{0}\in\mathfrak{X}_{\mu}^{s}(M)$ satisfying (\hyperlink{1}{1})
and (\hyperlink{2}{2}), it is enough to find $v\in\mathfrak{X}^{s}(M)$
supported inside $\varOmega$ such that
\[
\mbox{div}\,v=h=\mbox{div}\,w
\]
and then let $Z_{0}=w-v,$ thus canceling the divergence of $w$ inside
the ``transition annulus'' $\varOmega$, while keeping $w$ unaltered
outside that open set (in particular, $Z_{0}=w=Y$ in a neighbourhood
of $W_{0}$ and $Z_{0}=w=X$ in a neighbourhood of $(\mbox{int}\,W_{1})^{c}\supset U^{c}$).
Since the smooth scalar function $\xi$ is fixed, by (2.6) the $C^{r}$
norm of $h$ is linearly bounded by that of $Y-X|_{U}$,
\begin{equation}
\left|h\right|_{r}\leq n2^{r}\bigl|\xi\bigr|_{r+1}\,\left|Y-X\right|_{r;U}
\end{equation}
and it can be shown that (3) holds (see Section \hyperlink{se3}{3}).
The crucial facts that guarantee the existence of canceling vector
field $v$ are: (a) the connectedness of $\varOmega$, (b) $\mbox{supp}\,h\subset\varOmega$
and (c) $\int_{\varOmega}h\omega=0$, this equality following readily
from the divergence theorem since $X,\,Y$ are divergence-free vector
fields, $w$ coincides with $Y$ and $X$ in $\partial W_{0}$ and
$\partial W_{1}$ (respectively) and $\partial\overline{\varOmega}=\partial W_{0}\sqcup\partial W_{1}$,
thus
\[
\begin{array}{lll}
\int_{\varOmega}h\omega=\int_{\partial\varOmega}w\lrcorner\,\omega & = & -\int_{\partial W_{0}}Y\lrcorner\,\omega+\int_{\partial W_{1}}X\lrcorner\,\omega\\
 & \,\\
 & = & -\int_{W_{0}}\text{(div}\,Y)\omega+\int_{W_{1}}(\text{div}\,X)\omega=-0+0=0
\end{array}
\]
The actual construction of $v$ uses the global-to-local reduction
technique originally devised by Moser in \cite{MO}, essentially aiming
to solve, under condition (c), equation $\mbox{det}\,Df=1+h$ on closed
manifolds. We shall follow a complete presentation of the transposition
of this technique to the solution of $\mbox{div}\,u=h$ on $\varOmega\subset\mathbb{R}^{n}$
(under specific support premises) given by Csató, Dacorogna and Kneuss
\cite[p.184-188]{CDK}. The smoothing of $Z_{0}$ inside the transition
annulus $\varOmega$ is the last step of the construction.\smallskip{}

As a byproduct of the proof below together with the estimates in Sections
\hyperlink{se3.1}{3.1} and \hyperlink{se3.2}{3.2}, we obtain the
following useful result on the solutions to the equation $\text{div\,}v=h$,
with control of support (this is applied in the proofs of Theorems
\hyperlink{th4}{4} and \hyperlink{th5}{5}). The linearity of the
operator $\varPhi:h\mapsto v$ is immediate to check from its construction
(c.f. \cite[p.184-188]{CDK}). Another important aspect is that the
operator is universal i.e. $v$ has always the same regularity as
$h$ (the construction being independent of $r$ and $\alpha$) and
its $C^{r,\alpha}$ norm can be estimated in terms of that of $h$
times a constant, i.e. the restriction of linear operator $\varPhi$
to the subspace of $\mathcal{A}$ consisting of those functions $h$
that are of class $C^{r,\alpha}$ is bounded for the $\left\Vert \cdot\right\Vert _{C^{r,\alpha}}$
norm. 

\hypertarget{l1}{}
\begin{lem}
Let $M$ be a manifold as above. Suppose that $\varOmega_{1}$, $\varOmega$
are two connected open subsets with $\overline{\varOmega_{1}}\subset\varOmega$.
Then, there exists a linear operator $\varPhi:\mathcal{A}\rightarrow\mathcal{B}:\,h\mapsto v$,
satisfying $\text{\emph{div}\,}v=h$, where
\[
\begin{array}{l}
\mathcal{A}=\{h\in C^{1}(M):\,\int_{\varOmega}h\,\omega=0\text{ \,and \,}\text{\emph{supp}}\,h\subset\varOmega_{1}\}\\
\,\\
\mathcal{B}=\{v\in\mathfrak{X}^{1}(M):\,\text{\emph{supp}}\,v\subset\varOmega\}.
\end{array}
\]
Furthermore, if $h$ is of class $C^{r,\alpha}$, $r\in\mathbb{Z}^{+}$,
$0\leq\alpha\leq1$, then $v$ is $C^{r,\alpha}$ and there is a constant
$C=C(r,\varOmega_{1},\varOmega)\geq1$ such that 
\[
\left\Vert v\right\Vert _{C^{r,\alpha}}\leq C\left\Vert h\right\Vert _{C^{r,\alpha}}
\]
\end{lem}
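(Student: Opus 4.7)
The plan is to adapt Moser's global-to-local reduction technique, following the Euclidean presentation of Csató--Dacorogna--Kneuss \cite{CDK}, to the manifold setting. Since the atlas of $M$ is conservative, on every chart the divergence with respect to $\omega$ coincides with the Euclidean divergence, so the problem is essentially local and Euclidean. First I would cover the compact set $\overline{\varOmega_1}$ by finitely many open cubes $U_0,\ldots,U_N$, each contained in a single chart domain and with $\overline{U_j}\subset\varOmega$; using the connectedness of $\varOmega$ (and Lemma 3 if convenient) the cubes can be chained so that $W_j:=U_j\cap U_{j+1}$ is a nonempty open subset of $\varOmega$ for every $0\leq j<N$. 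The whole geometric data---cubes, chart maps, partition of unity, auxiliary bump functions---will be fixed once and for all, independent of $h$.

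Next I would reduce to local pieces with vanishing integral by a mass redistribution. Fix a smooth partition of unity $\{\psi_j\}_{j\leq N}$ subordinate to $\{U_j\}$ and auxiliary bump functions $\theta_j\in C^{\infty}_c(W_j)$ with $\int\theta_j\,\omega=1$. Put $h_j:=\psi_j h$, $a_j:=\int h_j\,\omega$ and $b_j:=a_0+\cdots+a_j$; the hypothesis $\int_\varOmega h\,\omega=0$ forces $b_N=0$. Define
\[
\tilde h_j := h_j - b_j\theta_j + b_{j-1}\theta_{j-1},
\]
with the conventions $\theta_{-1}=\theta_N=0$ and $b_{-1}=0$. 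Each $\tilde h_j$ has support in $U_j$, integral zero, satisfies $\sum_j\tilde h_j=h$, and depends linearly on $h$.

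Third, on each cube $U_j\subset\mathbb{R}^n$ I would solve the Euclidean equation $\text{div}\,u_j=\tilde h_j$ with $\text{supp}\,u_j\subset U_j$ by the elementary direct integration construction on a cube \cite[p.~184--188]{CDK}. This produces a linear operator whose $C^{r,\alpha}$ bound depends only on $r$ and on the fixed cube $U_j$ and---crucially---is independent of $\alpha$, because each step is either multiplication by a fixed smooth function or integration against a fixed measure along a coordinate direction. Extending each $u_j$ by zero and pushing forward to $M$ via the chart maps, set $v:=\sum_j u_j$. Then $\text{div}\,v=h$, $\text{supp}\,v\subset\bigcup_j U_j\subset\varOmega$, and $\varPhi:h\mapsto v$ is linear.

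Finally, the $C^{r,\alpha}$ estimate is obtained by composing the finitely many linear steps: by the product estimate of Section 5.1 each $\psi_j h$ is controlled by $\|h\|_{C^{r,\alpha}}\|\psi_j\|_{C^r}$ up to a combinatorial constant, each coefficient $b_j$ is bounded by $\mu(\varOmega)\|h\|_{C^{0}}$, and each local cube-solution adds a fixed multiplicative factor; chart transitions introduce only a further factor coming from the fixed atlas. The main obstacle---and the reason for following \cite{CDK} rather than the PDE approach of \cite{DM}---is to keep this composite constant independent of $\alpha$: Schauder estimates for the Neumann Poisson problem would introduce $\alpha$-dependence, whereas the direct cube integration does not. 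Once this uniformity is verified, the resulting constant $C$ depends only on $r$ and on the fixed geometric data, hence only on $r$, $\varOmega_1$ and $\varOmega$, as required.
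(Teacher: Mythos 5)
Your proposal is correct and follows essentially the same route as the paper: Moser's global-to-local reduction in the form of \cite[pp.~184--188]{CDK} --- a chained cube cover of $\overline{\varOmega_{1}}$ inside $\varOmega$, redistribution of mass through the overlaps to produce zero-integral pieces supported in single cubes, the elementary cube-by-cube integration of \cite[Lemma 9.8]{CDK}, extension by zero and summation --- with linearity and the $\alpha$-independence of the constant coming from the fact that all auxiliary data (cubes, charts, partition of unity, bump functions) is fixed once and for all. Your telescoping partial-sum redistribution $\tilde h_{j}=h_{j}-b_{j}\theta_{j}+b_{j-1}\theta_{j-1}$ is just an explicit chain-ordered special case of the linear system the paper solves by Cramer's rule following \cite[Lemma 9.9]{CDK}, so the two arguments coincide in substance.
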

\begin{proof}
(Theorem \hyperlink{th1}{1}). According to the Preview, it remains
to define $\varOmega$ and $\xi$ precisely and then solve
\[
\mbox{div}\,v=h\mbox{, }\quad v\in\mathfrak{X}^{s}(M)\mbox{ supported inside }\varOmega
\]
The existence of constant $C=C(r,K,U)$ satisfying (\hyperlink{3}{3})
is proved in Section \hyperlink{se3}{3}. We start by carefully constructing
$\varOmega$ and an auxiliary domain $\varOmega_{1}$, which is needed
in our approach.

(A) \noun{Construction of $\varOmega$, $\varOmega_{1}$, $V$ and
$w$.} Using Lemma \hyperlink{l3}{3} (Section \hyperlink{se5.3}{5.3}),
fix a compact $n$-submanifold $P$ with \emph{connected} $C^{\infty}$
boundary such that $K\subset\mbox{int}\,P$ and $P\subset U$. By
the existence of collars for $\partial P$ \cite[p.113]{HI}, there
are four smoothly isotopic (nested) manifolds $P_{i\leq4}$ satisfying
\[
K\subset\mbox{int}\,P_{1}\mbox{, }\qquad P_{i}\subset\mbox{int}\,P_{i+1}\quad(i\leq3),\mbox{ }\qquad P_{4}=P
\]
and such that 
\[
\varOmega:=\mbox{(int}\,P_{4})\setminus P_{1}\qquad\mbox{and}\qquad\varOmega_{1}:=\mbox{(int}\,P_{3})\setminus P_{2}
\]
are both diffeomorphic to $\partial P\times]0,1[$, hence connected
open sets. Exactly as described in the Preview, fix a scalar function
$\xi$ for $W_{0}=P_{2}$ and $W_{1}=P_{3}$ (the same for all $X$
and $Y$) and define $w$ and $h$ accordingly. Clearly $h\in C^{s}(M)$
is supported inside $\varOmega_{1},$ $\overline{\varOmega_{1}}\subset\varOmega$
and $\int_{\varOmega}h\omega=0$. We set $V=\mbox{int}\,P_{1}$.

(B) \noun{Finding divergence-canceling vector field $v$.} In order
to find $v\in\mathfrak{X}^{s}(M)$ supported inside $\varOmega$ and
satisfying $\mbox{div}\,v=h$, we may now apply the procedure in \cite[p.184-188]{CDK},
reducing this problem to the solution of finitely many local equations\hypertarget{eq2.8}{}
\begin{equation}
\mbox{div}\,v_{j}=h_{j},\quad v_{j}\in\mathfrak{X}^{s}(Q_{j})
\end{equation}
with $v_{j}$ compactly supported inside the open cube $Q_{j}\subset\mathbb{R}^{n}$.
The construction in \cite{CDK} carries almost verbatim to our closed
manifold $M$, as the integrals involved in the definition of the
auxiliary functions $h_{j}$ are invariant under chart transition
(see below).

Briefly, since $\overline{\varOmega_{1}}\subset\varOmega$ is compact,
it can be covered by finitely many small open sets $U_{j}\subset\varOmega$,
$0\leq j\leq N$, $N\geq3$, each of them intersecting $\varOmega_{1}$,
such that the image of each $U_{j}$ on some (reindexed) local chart
$(V_{j},\,\phi_{j})$ is an open cube $Q_{j}\subset\phi_{j}(V_{j})\subset\mathbb{R}^{n}$
of volume $\leq1$.\footnote{This fact will be used in Section \hyperlink{3.1c}{3.1(c)}.}
Clearly, $N$ depends only on $\varOmega_{1}$ and $\varOmega$ and
thus ultimately only on $K$ and $U$. Auxiliary functions $h_{j}\in C^{s}(M)$
are now constructed exactly as in \cite[p.185, Lemma 9.9]{CDK}. These
are well defined since the atlas is volume preserving, thus implying
that all integrals of scalar functions involved \cite[p.187]{CDK}
are invariant under chart transition (these appear in the constants
$\lambda_{k}$, see Section \hyperlink{3.1b}{3.1(b)}. The scalar
functions $h_{j}$ satisfy \cite[Lemma 9.9]{CDK}
\[
h=\underset{j=0}{\overset{N}{\sum}}h_{j},\qquad\mbox{supp}\,h_{j}\subset U_{j}\subset\varOmega,\qquad\int_{U_{j}}h_{j}\omega=0
\]
On local chart $(V_{j},\,\phi_{j})$, 
\[
\int_{Q_{j}}h_{j}=0,\qquad\mbox{supp}\,h_{j}\subset Q_{j}\subset\mathbb{R}^{n}
\]
Each local equation (\hyperlink{eq2.8}{2.8}) is now solved by \cite[p.185, Lemma 9.8]{CDK}
(which is valid for arbitrary open cubes, see Footnote 5) and the
pullback $\phi_{j}^{*}v_{j}$, still denoted by $v_{j}$, is extended
by $0$ to the whole $M$. As $h=\sum_{j=0}^{N}\mbox{div}\,v_{j}=\mbox{div\,(}\sum_{j=0}^{N}v_{j})$
and $\mbox{supp}\,v_{j}\subset\varOmega$, $v=\sum_{j=0}^{N}v_{j}$
is the desired vector field. Observe that, by construction, $h_{j}$,
$v_{j}$ and finally $v$ are $C^{s}$ if $h$ is $C^{s}$ (i.e. if
$X,\,Y$ are $C^{s}$, see Section \hyperlink{se3}{3}). We now have
$Z_{0}=w-v\in\mathfrak{X}_{\mu}^{s}(M)$ satisfying (1) and (2). Observe
that the above procedure actually gives a construction of the operator
$\varPhi$ in Lemma \hyperlink{l1}{1}, i.e. $v=\varPhi(\text{div}\,w)$.
Still, by construction, if $Y=X|_{U}$ then $Z_{0}=X$ (see Lemma
\hyperlink{l1}{1} above) hence $Z=X$. Otherwise, by Remark \hyperlink{r4}{4}
(Section \hyperlink{se3.1}{3.1}), the estimate $\left\Vert Z_{0}-X\right\Vert _{C^{r}}\leq C\left\Vert Y-X\right\Vert _{C^{r};U}$
is still valid with constant $C$ replaced by $C-1$ and we finally
get $Z$ still satisfying (1) - (3) and smooth in 
\[
\Delta=\{x\in M:\,Z(x)\neq X(x),\:Y(x)\}
\]
applying Theorem \hyperlink{th5}{5} to $Z_{0}$ and $\varOmega$
(this set being the interior of a compact collar of $\partial P$).
\end{proof}
\hypertarget{se3}{}

\section{Linear bound on $C^{r,\alpha}$ norms }

\hypertarget{se3.1}{}

\subsection{The $C^{r}$ case }

Instead of the standard Whitney $C^{r}$ norm $\left\Vert \cdot\right\Vert _{C^{r}}$,
we adopt the equivalent but more convenient norm $\left|\cdot\right|_{r}$
defined in Section \hyperlink{se5.1}{5.1}. Then, estimate (3) in
Theorem \hyperlink{th1}{1} is proved letting $C=n^{(r+1)/2}C'+1$
and finding a constant $C'=C'(r,K,U)$ for which 
\begin{equation}
\left|Z_{0}-X\right|_{r}\leq C'\left|Y-X\right|_{r;U}
\end{equation}
(clearly, $C=C(r,K,U)$ since $n=\mbox{dim}\,M$ is fixed).

\hypertarget{r4}{}
\begin{rem}
Note that the estimate (3) in Theorem \hyperlink{th1}{1} will still
be valid if one replaces $C$ by $C-1$ (as a consequence of adding
$+1$ in the definition of $C$). This is used at the end of the proof
of Theorem \hyperlink{th1}{1} (in the smoothing step). The same observation
holds for Theorem \hyperlink{th3}{3} (used in Corollary \hyperlink{C1}{1}). 
\end{rem}
As
\[
\begin{array}{c}
\left|Z_{0}-X\right|_{r}=\left|w-v-X\right|_{r}\leq\left|w-X\right|_{r}+\left|v\right|_{r}\\
\,\\
\left|w-X\right|_{r}=\left|\xi(Y-X)\right|_{r;U}\leq2^{r}\left|\xi\right|_{r}\left|Y-X\right|_{r;U}
\end{array}
\]
it is enough to find a constant $C_{0}=C_{0}(r,K,U)>0$ such that
$\left|v\right|_{r}\leq C_{0}\left|Y-X\right|_{r;U}$ and let $C'=2^{r}\left|\xi\right|_{r}+C_{0}$
(as $\left|\xi\right|_{r}$ depends only on $r$ and $\varOmega$
and thus ultimately only on $r$, $K$ and $U$). 

We will obtain a finite chain of linear bounds with constants $C_{1}$,
$C_{2}$, $C_{3}$ depending only on $r$, $K$ and $U$, finally
leading to the desired inequality.

\hypertarget{3.1a}{}(a) $\left|h\right|_{r}\leq C_{1}\left|Y-X\right|_{r;U}$.
From the local chart expression of $h$ (see (2.6) and (2.7) in the
Preview, Section  \hyperlink{se2.3}{2.3}), it follows that this inequality
holds for $C_{1}=n2^{r}\left|\xi\right|_{r+1}$. Thus,$\left|\xi\right|_{r+1}$
depends only on $r$, $K$ and $U$, so does $C_{1}$.

\hypertarget{3.1b}{}(b) $\bigl|h_{j}\bigr|_{r}\leq C_{2}\left|h\right|_{r}$.
Following the reasoning in \cite[Section 9.3, p.184-188]{CDK} transposed
to $M$, fix $\psi_{j},\,\eta_{k}\in C^{\infty}(M;[0,1])$ as in Lemma
9.9. Note that $\psi_{j},\,\eta_{k}$ depend ultimately only on $K$
and $U$. Let
\[
d_{1}=\underset{0\leq j\leq N}{\mbox{max}}\left|\psi_{j}\right|_{r},\qquad d_{2}=\underset{1\leq k\leq N}{\mbox{max}}\left|\eta_{k}\right|_{r}
\]
By definition, $h_{j}=h\psi_{j}+\sum_{k=1}^{N}\lambda_{k}A_{k}^{j}\eta_{k}$
(see the proof of Lemma 9.9 in \cite[p.185-188]{CDK}) where each
$A_{k}^{j}$ (depending on the sequence $U_{0},\ldots,U_{N}$) is
either $-1$, $0$ or $1$ and the $\lambda_{k}$'s are the constants
solving $\sum_{k=1}^{N}\lambda_{k}A_{k}^{j}=\int_{\varOmega}h\psi_{j}$,
$0\leq j\leq N$. In order to find the $\lambda_{k}$'s, we solve
the $N$ simultaneous equations corresponding to $1\leq j\leq N$,
as matrix $E$ obtained from $(N+1)\times N$ matrix $A=(A_{k}^{j})$
truncating its first line is actually invertible and the solutions
thus obtained automatically satisfy the equation corresponding to
$j=0$. Finding $\lambda_{k}$ by Cramer rule, $\lambda_{k}=\left|B\right|/\left|E\right|,$
and expanding determinant $\left|B\right|$ along the $k$-th column
(knowing that $A_{k}^{k}=1$, $A_{k}^{j}=-1$ or $0$ if $j<k$, $A_{k}^{j}=0$
if $j>k$ and each column of $E$ contains, at most, two nonzero entries),
we immediately get, on the chart containing the cube $U_{j}$ (recalling
that $N\geq3$),
\[
\left|\lambda_{k}\right|\leq N2^{N-3}\,\underset{0\leq j\leq N}{\mbox{max}}\left|\int_{\varOmega}h\psi_{j}\right|\leq N2^{N-3}\mbox{meas\,}\varOmega\left|h\right|_{r}
\]
\[
\left|h_{j}\right|_{r}\leq\left|h\psi_{j}\right|_{r}+N\underset{0\leq j\leq N}{\mbox{max}}\left|\lambda_{k}A_{k}^{j}\eta_{k}\right|_{r}\leq C_{2}\left|h\right|_{r}
\]
where $C_{2}=C_{2}(r,K,U)=2^{r}d_{1}+N^{2}2^{N-3}d_{2}\mbox{meas}\,\varOmega$.

\hypertarget{3.1c}{}(c) $\bigl|v_{j}\bigr|_{r}\leq C_{3}\left|h_{j}\right|_{r}$.
Recall that $v_{j}$ is found on local chart $(V_{j},\,\phi_{j})$
as the solution of (2.8) given by \cite[Lemma 9.8, p.185]{CDK} and
then extending its pullback by $0$ to the whole $M$. Clearly, Lemma
9.8 \cite{CDK} holds for each cube $Q_{j}\subset\mathbb{R}^{n}$.\footnote{The proof of Lemma 9.8 in \cite[p.185]{CDK} becomes valid for $Q_{j}$
performing the obvious translation of the cube and replacing $\xi$
by $\widehat{\xi}_{j}\in C_{0}^{\infty}(]0,\,\rho_{j}[)$, $\rho_{j}=\mbox{(vol}\,Q_{j})^{1/n}\leq1$,
satisfying $\int_{0}^{\rho_{j}}\widehat{\xi}_{j}=1$. Each $\widehat{\xi}_{j}$
is fixed and depends only on $\mbox{vol}\,Q_{j}$, hence ultimately
only on $K$ and $U$.} Since $\mbox{vol}\,Q_{j}\leq1$, a simple induction argument over
the dimension $n$ (carried on the modified proof of \cite[Lemma 9.8]{CDK},
see Footnote 5) shows that, on local chart $(V_{j},\,\phi_{j})$,
\[
\left|v_{j}\right|_{r}\leq\big(2^{r}\big|\widehat{\xi}_{j}\big|_{r}\big)^{n}\left|h_{j}\right|_{r}
\]
Now, in order to get the global $C^{r}$ norm of $v_{j}$ we need
to take into account the potential magnification of these local norms
under chart transitions $(\phi_{ji})_{i,j\leq m}$. Since the transitions
between the chart expressions of a vector field are of the form 
\[
X_{j}|_{\phi_{j}(V_{i}\cap V_{j})}=\phi_{ji_{*}}X_{i}|_{\phi_{i}(V_{i}\cap V_{j})}
\]
it is easily seen that there is a constant 
\[
d=d(r,\mbox{max}_{i,j\leq m}\bigl|\phi_{ji}\bigr|_{r+1})\geq1
\]
such that 
\[
\bigl|X_{j}|_{\phi_{j}(V_{i}\cap V_{j})}\bigr|_{r}\leq d\bigl|X|_{\phi_{i}(V_{i}\cap V_{j})}\bigr|_{r}
\]
for any $i,\,j\leq m$. The global $C^{r}$ norm of $v_{j}$ can then
be estimated by
\[
\left|v_{j}\right|_{r}\leq C_{3}\left|h_{j}\right|_{r}\mbox{,}\quad\mbox{where }\,C_{3}=d(2^{r}d_{0})^{n},\quad\mbox{ }d_{0}=\underset{0\leq j\leq N}{\mbox{max}}\big|\widehat{\xi}_{j}\big|_{r}
\]
As the atlas is fixed, we actually have $C_{3}=C_{3}(r,K,U)$. 

(d) Finally, $v=\sum_{j=0}^{N}v_{j}$, hence $\left|v\right|_{r}\leq(N+1)\mbox{max}_{_{0\leq j\leq N}}\bigl|v_{j}\bigr|_{r}$,
therefore,
\[
\left|v\right|_{r}\leq(N+1)C_{1}C_{2}C_{3}\left|Y-X\right|_{r;U}
\]
As $N+1$, $C_{1}$, $C_{2}$ and $C_{3}$ depend only on $r,$ $K$,
$U$, the desired constant is $C_{0}=(N+1)C_{1}C_{2}C_{3}$.

\hypertarget{se3.2}{}

\subsection{The $C^{r,\alpha}$ case, $0<\alpha\leq1$.}

In first place we note that a direct inspection of the construction
given in the proof of Theorem \hyperlink{th1}{1} of the operator
$\varPhi$ in Lemma \hyperlink{l1}{1} reveals that the resulting
vector field $Z=w-\varPhi(\text{div}\,w)$ is of class $C^{s,\beta}$
if $X$ and $Y$ are $C^{s,\beta}$, $s\in\mathbb{Z}^{+}$, $0\leq\beta\leq1$.
The proof of Theorem \hyperlink{th3}{3} is that of Theorem \hyperlink{th1}{1}
modulo a few changes needed to get estimate (3) that we now indicate.
To simplify the estimates, it is preferable to work exclusively with
the following $C^{r,\alpha}$ norm, which is equivalent to the usual
Whitney-Hölder $C^{r+\alpha}$ norm $\left\Vert \cdot\right\Vert _{C^{r,\alpha}}$
(see Section \hyperlink{se5.1}{5.1} for the notation):
\[
\left|X\right|_{r,\alpha;U}=\underset{\substack{i,j;\,|\sigma|=r}
}{\mbox{max}}\left(\left|X\right|_{r;U},\,\left[\partial^{\sigma}X_{j}^{i}\right]_{\alpha;\phi_{j}(V_{j}\cap U)}\right)
\]
, the $\alpha$-Hölder seminorm $\left[h\right]_{\alpha;D}\text{}$
of a scalar function $h$ on a domain $D$ (with at least two points)
being defined in the usual way. On local charts, this is also equivalent
to the $C^{r,\alpha}$ norm adopted in \cite[p.336]{CDK}, which serves
as a reference for the estimates invoked below. We will need reasonable
estimates for the Hölder norms of the product and composition of functions
defined on open subsets $A\subset M$, and these exist provided that,
(i) on every local chart, the domain $\phi_{i}(V_{i}\cap A$) of each
function involved is a Lipschitz set (see e.g \cite[p.338, 366, 369]{CDK})
and (ii) these functions and their derivatives up to order $r$ extend
continuously to the boundaries of these domains (we generically denote
the space of $C^{r,\alpha}$ functions on $A$ satisfying (ii) by
$C^{r,\alpha}(\overline{A})$). With these two conditions we also
guarantee the respective inclusion of Hölder spaces: if $r+\alpha\leq s+\beta$
where $0\leq r\leq s$ are integers and $0\leq\alpha,\beta\leq1$,
then $C^{s,\beta}(\overline{A})\subset C^{r,\alpha}(\overline{A})$
and there is a constant $C=C(s,A)>0$ such that $\left|\cdot\right|_{r,\alpha;A}\leq C\left|\cdot\right|_{s,\beta;A}$
\cite[p.342]{CDK}.

Instead of the estimate at the end of Section \hyperlink{se5.1}{5.1},
we now use for the norm of the product of functions in $C^{r,\alpha}(\overline{A})$
(see e.g. \cite[p. 366]{CDK}), 
\begin{equation}
\left|hX\right|_{r,\alpha;A}\leq C(r,A)\left|h\right|_{r,\alpha;A}\left|X\right|_{r,\alpha;A}
\end{equation}
provided each open set $\phi_{j}(V_{j}\cap A)$ is Lipschitz. At first
sight, this may seem problematic for the estimates involving the vector
field $Y$, whose domain $U$ may not intersect the local charts in
Lipschitz sets (also, while $C^{r}$-bounded, $Y$ may fail to satisfy
condition (ii)).\textbf{ }This difficulty is circumvented by the following
simple observation (replacing steps (a) - (c) in Section \hyperlink{se3.1}{3.1}):

(a') following the proof of Theorem \hyperlink{th1}{1}, $w=X$ in
a neighborhood of $(\text{int}\,P)^{c}$, thus
\[
\left|w-X\right|_{r,\alpha}=\left|w-X\right|_{r,\alpha;\text{int}\,P}=\left|\xi(Y-X)\right|_{r,\alpha;\text{int}\,P}
\]

Now, $P$ is a smooth compact $n$-submanifold with boundary and since
the atlas is regular so are the closures $\overline{V_{i}}$ of the
chart domains (these are embedded $\mathbb{D}^{n}$'s). Thus each
open set $\phi_{i}(V_{i}\cap\text{int}\,P)$ is Lipschitz and so are
the domains $\phi_{i}(V_{i}\cap V_{j})$ of the transition maps $\phi_{ji}$.
Therefore (as $P$ and $\xi$ depend only on $K$ and $U$), 

\[
\begin{array}{lll}
\left|w-X\right|_{r,\alpha} & \leq & C(r,K,U)\left|\xi\right|_{r,\alpha}\left|Y-X\right|_{r,\alpha;\text{int}\,P}\\
 & = & C(r,\alpha,K,U)\left|Y-X\right|_{r,\alpha;\text{int}\,P}
\end{array}
\]
and
\[
\begin{array}{lll}
\left|h\right|_{r,\alpha} & = & \left|h\right|_{r,\alpha;\text{int}\,P}\\
 & \leq & C(r,K,U)\left|\xi\right|_{r+1,\alpha}\left|Y-X\right|_{r,\alpha;\text{int}\,P}\\
 & = & C(r,\alpha,K,U)\left|Y-X\right|_{r,\alpha;\text{int}\,P}
\end{array}
\]

From now on we need not concern with condition (ii) anymore, as it
is immediate to verify that all functions involved satisfy it.

(b') the finitely many auxiliary functions $\xi$, $\psi_{j}$, $\eta_{j}$
are defined on the whole $M$, thus using (3.2) one gets the local
estimate (on the chart containing the cube $\phi_{j}(V_{j})$),
\[
\left|h_{j}\right|_{r,\alpha}\leq C(r,\alpha,K,U)\left|h\right|_{r,\alpha}
\]

(c') the auxiliary functions involved in the construction of the compactly
supported solution to $\text{div\,}v_{j}=h_{j}$ on the cube $Q_{j}=\phi_{j}(U_{j})$
are all defined on (the closure of) this Lipschitz set, thus (3.2)
applies. The deduction of the local estimate
\[
\left|v_{j}\right|_{r,\alpha}\leq C(r,\alpha,K,U)\left|h_{j}\right|_{r,\alpha}
\]
is a bit more subtle than the corresponding $C^{r}$ case (but still
simple), and involves a judicious application of differentiation under
the integral sign. Then, as in the $C^{r}$ case, there is a constant
\[
d=d(r,\alpha)=d\big(r,\mbox{max}_{i,j\leq m}\left|\phi_{ji}\right|_{r+1,\alpha}\big)\geq1
\]
permitting to estimate the global $C^{r,\alpha}$ norm of $v_{j}$
in terms of that on the cube times $d$. To get this constant, one
uses (3.2) together with the estimate for the norm of the composition
(still subject to conditions (i) and (ii) above, see e.g. \cite[p.369]{CDK};
here $g:A\rightarrow B=\text{dom}\,f$),
\[
\begin{array}{lll}
\left|f\circ g\right|_{r,\alpha;A} & \leq & C(r,A,B)\left|f\right|_{r,\alpha;B}\left(1+\left|g\right|_{r,\alpha;A}^{r+\alpha}\right)\\
 & \leq & C(r,A,B)\left|f\right|_{r,\alpha;B}\left(1+\text{max}\left(\left|g\right|_{r,\alpha;A}^{r},\left|g\right|_{r,\alpha;A}^{r+1}\right)\right)
\end{array}
\]

Finally, we observe that Constant $C$ in Theorem \hyperlink{th3}{3}
actually does not depend on the Hölder exponent $\alpha$, as $C$
ultimately depends only on $r$ and on the $C^{r,\alpha}$ and $C^{r+1,\alpha}$
norms of finitely many smooth functions depending only on $K$ and
$U$ or even only on the atlas (this is the case for the chart transition
maps $\phi_{ji}$). On local charts, the domains $A$ of these functions
are always Lipschitz (see above), thus, for each such function, all
these norms (with $\alpha$ in the range $]0,1]$) are uniformly estimated
in terms of the respective $C^{r+2}$ norm times a constant $C(r,A)$
(\cite[p.342]{CDK}). Taking the maximum of these constants for the
finitely many functions involved, we get a constant $\widehat{C}=\widehat{C}(r,K,U)$,
enabling the simultaneous estimate of all these $C^{r,\alpha}$ and
$C^{r+1,\alpha}$ norms ($0<\alpha\leq1)$ in terms of the respective\textbf{
}$C^{r+2}$ norms times $\widehat{C}$. Thus $C$ depends only on
$r$, $K$ and $U$. 

\hypertarget{se4}{}

\section{Linearized conservative Franks lemma}

We now state the linearized volume preserving version of Franks lemma.
Since perturbations of diffeomorphisms are usually carried out via
chart representations, as with Theorem \hyperlink{th6}{6}, care has
been taken to find a $\delta$ that directly estimates the permitted
variation of the derivative on all chart representations (see Section
\hyperlink{se5.2.3}{5.2.3} for the terminology). We start by stating
a simpler topological version of this result. The full strength is
achieved in Theorem \hyperlink{th}{8}.

\hypertarget{th7}{}
\begin{thm}
\emph{(Linearized conservative Franks lemma). }Let $M$ be a manifold
as in Section \hyperlink{se2}{2}. Fix $r\in\mathbb{Z}^{+}$ and $0<\alpha<1$
and let $\mathcal{\mathcal{U}}$ be a $C^{1}$ neighbourhood of $f\in\text{\emph{Diff}}_{\mu}^{\,r,\alpha}(M)$
in $\text{\emph{Diff}}_{\mu}^{\,r,\alpha}(M)$. Then, there is a smaller
$C^{1}$ neighbourhood $\mathcal{U}_{0}$ of $f$ in $\text{\emph{Diff}}_{\mu}^{\,r,\alpha}(M)$
and $\delta=\delta(r,\alpha,f,\mathcal{U})>0$ such that: given

- any $g\in\mathcal{U}_{0}$;

- any finite set $\varSigma\subset M$;

- any neighbourhood $U$ of $\varSigma$;

- any linear maps $A_{x}\in SL(n,\mathbb{R})$, $x\in\varSigma$,
satisfying
\[
\left\Vert A_{x}-Dg_{x}(x)\right\Vert <\delta
\]
where $g_{x}$ is some chart representation of $g$ around $x$, there
exists $\widetilde{g}\in\mathcal{U}$ having, for each $x\in\varSigma$,
a chart representation $\widetilde{g_{x}}$ around $x$ comparable
with $g_{x}$ and such that:
\begin{enumerate}
\item $\widetilde{g_{x}}(y)=g_{x}(x)+A_{x}(y-x)$ near $x$;
\item $\text{\emph{supp}}(\widetilde{g}-g)\subset U$.
\end{enumerate}
\noindent Furthermore, if g is $C^{\infty}$ then so is $\widetilde{g}$. 
\end{thm}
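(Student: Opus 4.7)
The plan is to follow the proof strategy of Theorem \hyperlink{th6}{6} almost verbatim --- reduce to a single perturbation point, rescale to the unit ball via a homothety, perform a universal-scale pasting there, and pull back --- the only essentially new ingredient being that the pasting now acts on volume preserving diffeomorphisms rather than divergence-free vector fields, so in place of Lemma \hyperlink{l1}{1} (divergence equation with compact support) one must invoke the compactly-supported solution of the prescribed Jacobian equation from \cite{TE}. First I reduce to $\varSigma = \{x_0\}$ by treating disjoint small chart neighbourhoods of each point of $\varSigma$ independently. Fixing a chart in which $x_0$ corresponds to $0 \in \mathbb{R}^n$ and composing with translations at source and target, I may assume $g(0) = 0$; set $L(y) = A y$ as the affine target. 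For $0 < \lambda < 1$ small define
\[
g_\lambda(y) = \lambda^{-1} g(\lambda y), \quad y \in \mathbb{B}^n,
\]
a volume preserving $C^{r,\alpha}$ diffeomorphism of $\mathbb{B}^n$ fixing $0$. The same homothety computation as in Step 1 of the proof of Theorem \hyperlink{th6}{6}, applied to $g - L$, gives
\[
\|g_\lambda - L\|_{C^1;\mathbb{B}^n} \xrightarrow[\lambda \to 0]{} \|Dg(0) - A\| < \delta.
\]

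The critical step is the unit-scale pasting of $g_\lambda$ and $L$. Set $\phi_\lambda = L^{-1} \circ g_\lambda$, a volume preserving $C^{r,\alpha}$ diffeomorphism fixing $0$ and $C^1$-close to the identity (uniformly in $\lambda$ small). Fix a smooth bump $\xi$ on $\mathbb{B}^n$ equal to $1$ on $\frac{1}{3}\mathbb{D}^n$ and to $0$ outside $\frac{2}{3}\mathbb{B}^n$, and form the naive interpolation
\[
\widehat{\phi}(y) = \xi(y)\, y + (1 - \xi(y))\, \phi_\lambda(y),
\]
a $C^{r,\alpha}$ diffeomorphism equal to the identity on $\frac{1}{3}\mathbb{D}^n$ and to $\phi_\lambda$ outside $\frac{2}{3}\mathbb{B}^n$, whose Jacobian $J$ equals $1$ outside the annulus $\varOmega = \frac{2}{3}\mathbb{B}^n \setminus \frac{1}{3}\mathbb{D}^n$. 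The integrability condition $\int_\varOmega (J - 1) = 0$ holds: since $\phi_\lambda$ preserves the volume of $\frac{2}{3}\mathbb{B}^n$, one has $\int_\varOmega J = \text{vol}(\widehat{\phi}(\varOmega)) = \text{vol}(\phi_\lambda(\frac{2}{3}\mathbb{B}^n) \setminus \frac{1}{3}\mathbb{D}^n) = \text{vol}(\varOmega)$. Invoking the control-of-support prescribed Jacobian theorem of \cite{TE} on $\varOmega$ yields a $C^{r,\alpha}$ diffeomorphism $\rho$ of $\mathbb{B}^n$ with $\rho - \text{Id}$ supported in $\overline{\varOmega}$, $\det D(\widehat{\phi} \circ \rho) \equiv 1$, and $\|\rho - \text{Id}\|_{C^1}$ bounded by a universal constant times $\|J - 1\|_{C^0}$. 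Then
\[
\widetilde{g}_\lambda := L \circ \widehat{\phi} \circ \rho
\]
is a volume preserving $C^{r,\alpha}$ diffeomorphism of $\mathbb{B}^n$, equals $L$ on $\frac{1}{3}\mathbb{D}^n$, equals $g_\lambda$ outside $\frac{2}{3}\mathbb{B}^n$, and satisfies $\|\widetilde{g}_\lambda - g_\lambda\|_{C^1;\mathbb{B}^n} \leq C \|Dg(0) - A\|$ for a universal constant $C$.

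Finally, pull back by the homothety: $\widetilde{g}(y) = \lambda\, \widetilde{g}_\lambda(\lambda^{-1} y)$ on $\lambda\mathbb{B}^n$ and $\widetilde{g} = g$ elsewhere. Since the homothety is a contraction at the source, the $C^1$ norm of $\widetilde{g} - g$ does not exceed that of $\widetilde{g}_\lambda - g_\lambda$, so fixing $\delta$ small enough (in terms of $C$, the chart-transition factor $d$ of Theorem \hyperlink{th6}{6}, and the size of $\mathcal{U}$) gives $\widetilde{g} \in \mathcal{U}$, while choosing $\lambda$ small enough forces $\text{supp}(\widetilde{g} - g) \subset U$. A narrower $C^1$-neighbourhood $\mathcal{U}_0$ of $f$ absorbs the perturbation of $g$ from $f$. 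The main obstacle is precisely the unit-scale pasting of two close volume preserving diffeomorphisms with compact support and universally-controlled $C^1$ size, which rests on the optimal-regularity prescribed Jacobian results with control of support from \cite{TE} --- the discrete-time counterpart of the divergence-cancelling step underlying Theorem \hyperlink{th1}{1}. The $C^\infty$ case is handled analogously, with Moser's flow method replacing the Hölder prescribed-Jacobian solver.
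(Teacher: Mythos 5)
Your overall strategy coincides with the paper's: reduce to one point, rescale by a homothety to the unit ball, factor out the linear map, interpolate naively with a bump function, correct the volume distortion by a compactly supported solution of the prescribed Jacobian equation from \cite{TE}, and scale back down (with Moser's flow method in the $C^{\infty}$ case). However, there are two genuine gaps.

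First, your claim that $\left\Vert \rho-\text{Id}\right\Vert _{C^{1}}$ is bounded by a universal constant times $\left\Vert J-1\right\Vert _{C^{0}}$ is false: this is exactly the $k=0$ instance of the optimal $C^{k}$ regularity statement for the Jacobian equation, which the paper explicitly notes is \emph{false} (see item (iii) of the Introduction, citing \cite[p.192 and 180]{CDK}). The solver in \cite{TE} requires H\"older data: one gets $\left\Vert \rho-\text{Id}\right\Vert _{C^{1}}\leq C\left|J-1\right|_{0,\alpha}$, and to control the $\alpha$-H\"older seminorm of $J-1$ you must control the $C^{1,\alpha}$ norm of the interpolation $\widehat{\phi}$, not merely its $C^{1}$ norm. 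This is repairable, and the paper does repair it: under the rescaling $g_{\lambda}(y)=\lambda^{-1}g(\lambda y)$ the H\"older seminorm of the first derivatives scales as $\lambda^{\alpha}\left[Dg\right]_{\alpha}\rightarrow0$, so $\left|g_{\lambda}-Dg(0)\right|_{1,\alpha;\mathbb{B}^{n}}\rightarrow0$ and the whole chain of estimates can be run in the $C^{1,\alpha}$ norm down to the volume correction, returning to the $C^{1}$ norm only afterwards. As written, your argument never establishes the H\"older smallness that the solver actually needs. (You also do not verify that $\widehat{\phi}$ is injective, hence a diffeomorphism; the paper proves this via a quantitative $\frac{1}{6}\left|y-x\right|$ lower bound.)

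Second, the conclusion $\widetilde{g}\in\mathcal{U}$ requires the derivative estimate to hold in \emph{all} chart representations of the covering system defining $\mathcal{U}$, not just in the chosen chart $g_{x}$, and borrowing ``the chart-transition factor $d$ of Theorem 6'' is not adequate: that constant governs push-forwards of vector fields, which transform linearly, whereas here the transition is $\phi_{\widehat{j}j}\circ g_{ji}\circ\phi_{i\widehat{i}}$ and comparing $D\widehat{\widetilde{g}}(\widehat{y})$ with $D\widehat{g}(\widehat{y})$ forces one to compare $D\phi_{\widehat{j}j}$ at the two \emph{different} points $\widetilde{g_{x}}(y)$ and $g_{x}(y)$. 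The paper handles this by a mean value argument on the segment $[\widetilde{g_{x}}(y),g_{x}(y)]$, which brings in $\sup\|D^{2}\phi_{ji}\|$ and a bound on $\sup_{M}\|Dg\|$, and moreover requires the enlarged atlas $(W_{i},\varPhi_{i})$ because that segment need not lie in the domain $\phi_{j}(V_{j}\cap V_{\widehat{j}})$ when $g(x)$ sits on the boundary of a chart domain. This bookkeeping is where the constant $b$ (hence $\delta$) actually comes from, and it is absent from your argument.
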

\hypertarget{r5}{}
\begin{rem}
for each $x\in\varSigma$, (1) implies $\widetilde{g}(x)=g(x)$, $D\widetilde{g_{x}}(x)=A_{x}$
and $\widetilde{g}$ is affine linear near $x$ in chart representation
$\widetilde{g_{x}}$.
\end{rem}
The proof actually establishes the stronger result stated below. Given
a $C^{1}$ diffeomorphism $f$ of $M$ onto itself, let $\text{sup}_{M}\|Df\|$
denote the supremum of $\|Df(y)\|$ for all $y\in M$, over all possible
chart representations of $f$ around $y$ (see Section \hyperlink{se5.2}{5.2}).
As in chart representations the derivatives of a conservative diffeomorphism
belong to $SL(n,\mathbb{R})$, imposing an uniform upper bound $\text{sup}_{M}\|Df\|\leq d$
automatically guarantees uniform local bounded distortion for all
conservative diffeomorphisms satisfying this inequality: on chart
representations, for any $x\in M$, the image of $\mathbb{S}^{n-1}$
under the derivative $Df(x;\cdot)$ is an ellipsoid with major radius
$\leq d$ and minor radius $\geq d^{-n+1}$ (this is immediate looking
at the polar decomposition).

Also, as it is shown below in part (C) of the proof of Lemma \hyperlink{l2}{2},
$\delta$ can be made to depend linearly on the required $C^{1}$-closeness
$\epsilon$ of the resulting diffeomorphism $\widetilde{g}$ to $g$
(provided $\epsilon$ is small enough). With both observations in
mind, Theorem \hyperlink{th7}{7} can be reformulated as follows:

\hypertarget{th8}{}
\begin{thm}
\emph{(Linearized conservative Franks lemma). }Let $M$ be a manifold
as in Section \hyperlink{se2}{2}. Fix $r\in\mathbb{Z}^{+}$, $0<\alpha<1$
and $d\geq1$. Then, there is a constant $\chi=\chi(r,\alpha,d)>0$
such that: given 

- any $g\in\text{\emph{Diff}}_{\mu}^{\,r,\alpha}(M)$ with $\text{\emph{sup}}_{M}\|Dg\|\leq d$ 

- any $0<\epsilon\leq1$;

- any finite set $\varSigma\subset M$;

- any neighbourhood $U$ of $\varSigma$;

- any linear maps $A_{x}\in SL(n,\mathbb{R})$, $x\in\varSigma$,
satisfying
\[
\left\Vert A_{x}-Dg_{x}(x)\right\Vert <\chi\epsilon
\]
where $g_{x}$ is some chart representation of $g$ around $x$, then
(adopting any local $C^{1}$-metrization of $\text{\emph{Diff}}_{\mu}^{\,r,\alpha}(M)$
near $g$ as in Section \hyperlink{se5.2}{5.2}), there exists $\widetilde{g}\in\text{\emph{Diff}}_{\mu}^{\,r,\alpha}(M)$
$\epsilon\text{-}C^{1}$-close to $g$ having, for each $x\in\varSigma$,
a chart representation $\widetilde{g_{x}}$ around $x$ comparable
with $g_{x}$ and such that:
\begin{enumerate}
\item $\widetilde{g_{x}}(y)=g_{x}(x)+A_{x}(y-x)$ near $x$;
\item $\text{\emph{supp}}(\widetilde{g}-g)\subset U$.
\end{enumerate}
\noindent Furthermore, if g is $C^{\infty}$ then so is $\widetilde{g}$. 
\end{thm}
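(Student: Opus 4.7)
The plan is to adapt the homothety-based strategy of Theorem 6 to the diffeomorphism setting, carrying out the conservative pasting at a fixed ``unit scale'' so that the constants coming from the Hölder pasting machinery do not blow up as the support of the perturbation collapses onto $\varSigma$. Choosing pairwise disjoint chart neighbourhoods around the points of $\varSigma$ reduces the problem to $\varSigma=\{x\}$. Fix a chart $(V,\phi)$ around $x$ and, by a translation, assume $\phi(x)=0$ and $g_{x}(0)=0$; write $T=Dg_{x}(0)\in SL(n,\mathbb{R})$. The uniform bound $\operatorname{sup}_{M}\|Dg\|\le d$ gives bounded distortion of $T$ and $T^{-1}$, with constants depending only on $d$. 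For $0<\lambda<1$ so small that $\lambda\mathbb{B}^{n}$ lies in the chart and in $U$, set $g_{\lambda}(y)=\lambda^{-1}g_{x}(\lambda y)$ on $\mathbb{B}^{n}$; then $g_{\lambda}\in\operatorname{Diff}_{\mu}^{r,\alpha}$, $Dg_{\lambda}(y)=Dg_{x}(\lambda y)$, so $Dg_{\lambda}(0)=T$ and the argument (b) in the proof of Theorem 6 transposes verbatim to show $\|g_{\lambda}-T(\cdot)\|_{C^{1};\mathbb{B}^{n}}\to 0$ as $\lambda\to 0$. Meanwhile $\|A(\cdot)-T(\cdot)\|_{C^{1};\mathbb{B}^{n}}=\|A-T\|<\chi\epsilon$.

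At this fixed unit scale one performs a conservative pasting on the fixed triple $(\tfrac{1}{3}\mathbb{D}^{n},\tfrac{2}{3}\mathbb{B}^{n},\mathbb{B}^{n})$: paste the affine map $y\mapsto Ay$ (inner datum) with $g_{\lambda}$ (outer datum), producing $G\in\operatorname{Diff}_{\mu}^{r,\alpha}$ with $G=A(\cdot)$ on $\tfrac{1}{3}\mathbb{B}^{n}$, $G=g_{\lambda}$ on $\mathbb{B}^{n}\setminus\tfrac{2}{3}\mathbb{B}^{n}$, and
\[
\|G-g_{\lambda}\|_{C^{1};\mathbb{B}^{n}}\le C\max\bigl(\|A-T\|,\,\|g_{\lambda}-T(\cdot)\|_{C^{1}}\bigr),
\]
with $C=C(r,\alpha,d)$ independent of $g$. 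This is the discrete analogue of Theorem 3, carried out through Lemma 2: first form the non-conservative interpolation $\widehat{G}=\xi\,A(\cdot)+(1-\xi)g_{\lambda}$ using a fixed cutoff $\xi$, obtaining a $C^{r,\alpha}$ diffeomorphism whose Jacobian $\det D\widehat{G}$ equals $1$ outside the transition annulus and is $C^{r-1,\alpha}$-close to $1$ throughout; then correct $\widehat{G}$ by composing with a volume-adjusting diffeomorphism, compactly supported in the annulus, produced by the Hölder Dacorogna--Moser theorem with control of support from \cite{TE}. In the $C^{\infty}$ case the correction is supplied instead by Moser's flow method.

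To conclude, pull $G$ back to the real scale by the contractive homothety $y\mapsto\lambda y$ and extend by $g$ outside $\lambda\mathbb{B}^{n}$: this yields $\widetilde{g}\in\operatorname{Diff}_{\mu}^{r,\alpha}(M)$ supported in $\phi^{-1}(\lambda\mathbb{B}^{n})\subset U$ and satisfying (1)--(2). Exactly as in Step 3 of the proof of Theorem 6, the homothetic contraction by $\lambda<1$ leaves the supremum of the derivative unchanged and shrinks the $C^{0}$ norm, so
\[
\|\widetilde{g}-g\|_{C^{1}}\le d'\,\|G-g_{\lambda}\|_{C^{1};\mathbb{B}^{n}},
\]
where $d'$ is the chart-transition constant of Section 3.1(c). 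Setting $\chi=(2d'C)^{-1}$ and choosing $\lambda$ small enough that $\|g_{\lambda}-T(\cdot)\|_{C^{1}}<\chi\epsilon$ yields $\|\widetilde{g}-g\|_{C^{1}}<\epsilon$, which is the required linear dependence $\delta=\chi\epsilon$. Preservation of $C^{\infty}$ regularity and the ``comparable chart representation'' assertion are inherited from the universality of the cutoff and of the Dacorogna--Moser operator.

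The decisive obstacle is concentrated in the unit-scale pasting: the conservative $C^{r,\alpha}$ pasting of diffeomorphisms with linear control of the Hölder norm \emph{and} compact support of the Jacobian correction. This is precisely where the Hölder hypothesis enters essentially, since the product rule applied to $\widehat{G}$ loses one derivative in the Jacobian, and recovering it with prescribed support relies on the elliptic regularity of the Neumann problem underlying the Dacorogna--Moser theorem; this is also the point at which the purely $C^{r}$ analogue breaks down and remains open (see \cite[p.192]{CDK}). Once this Lemma 2 is in place, the homothety trick does the rest, and the linear dependence of $\delta$ on $\epsilon$ is automatic.
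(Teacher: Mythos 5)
Your proposal follows essentially the same route as the paper's own proof: reduction to a single point, homothety rescaling to the unit ball, cutoff interpolation between the affine map and the rescaled diffeomorphism, volume correction with controlled support via the H\"older Dacorogna--Moser theorem of \cite{TE} (Moser's flow method in the $C^{\infty}$ case), scaling back down, and a chain of linear estimates yielding $\delta=\chi\epsilon$ --- the paper's Lemma 2 is precisely your unit-scale step, except that it first normalizes by $A^{-1}$ so as to paste $\text{Id}$ with the near-identity map $A^{-1}\circ f_{\lambda}$, which keeps all constants uniform over $\|Dg_{x}(x)\|\leq c$. The one point where your sketch is imprecise is that the unit-scale estimate must be run in the $C^{1,\alpha}$ norm rather than the $C^{1}$ norm: the rescaling multiplies the $\alpha$-H\"older seminorm of $Dg_{\lambda}$ by $\lambda^{\alpha}$, and it is this (the paper's computation (4.5)) that makes $|\theta-1|_{0,\alpha}$ small and hence the volume-correcting diffeomorphism $C^{1}$-close to the identity; with that substitution (and the routine but nontrivial bookkeeping for comparable chart representations, which for diffeomorphisms involves second derivatives of the transition maps and the enlarged atlas) your argument coincides with the paper's.
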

\hypertarget{r6}{}
\begin{rem}
Avila's localized smoothing \cite[Theorem 7]{AV} implies that Theorem
\hyperlink{th7}{7} can be stated for $\text{Diff}_{\mu}^{1}(M)$
in place of $\text{Diff}_{\mu}^{\,r,\alpha}(M)$ (with $\chi=\chi(d)>0$),
the reduction of the $C^{1}$ local linearization to the $C^{\infty}$
case being then achieved through Lemma \hyperlink{l2}{2} below. However,
if $g$ is $C^{k}$, $k\geq2$ an integer, one should not be tempted
to apply \cite[Theorem 7]{AV} in order to smooth $g$ near $x$ (getting
$\widehat{g}$), then apply the elementary perturbation lemma\textbf{
\cite[Lemma A.4, p.93]{BC}} to correct $\widehat{g}(x)$ back to
$g(x)$ and finally apply Lemma \hyperlink{l2}{2} below to get a
$C^{1}$ perturbation $\widetilde{g}$, still of class $C^{k}$, which
is affine linearized near $x$ (in some chart representation) and
coincides with $g$ at $x$ and outside any given small neighbourhood
of this point. Indeed, \cite{AV} does not guarantee the resulting
map to be $C^{2}$ at the boundary points of the open set $\varOmega$
where the smoothing takes place, the above reasoning being valid only
for $k=1$.
\end{rem}
Obviously, the $C^{1}$-closeness of $\widetilde{g}$ to $g$ is the
best possible and cannot be upgraded to any of the higher $C^{1+}$
topologies (even if the localized support is dropped and $\varSigma$
is reduced to a single point). In terms of regularity, Theorems \hyperlink{th7}{7}
and \hyperlink{th8}{8} are also optimal, in the sense that the resulting
diffeomorphism $\widetilde{g}$ is still $C^{r,\alpha}$ (respect.
$C^{\infty}$) as the original one. If one is particularly interested
in the class of $C^{k}$ diffeomorphisms, $k\geq2$ an integer, it
is natural to ask if $\widetilde{g}$ can be found of class $C^{k}$
as $g$ and not merely of class $C^{k-1,\alpha}$ for any chosen $0<\alpha<1$
(a version of this statement appears without proof in \cite[p.217]{HHTU}).
For $k\geq2$, a positive answer seems beyond the techniques presently
available (if possible at all). The case $k=1$ is exceptional due
to Avila's theorem mentioned above, but no analogue result is known
for $k\geq2$. These difficulties are related to the fact that, in
dimension $n\geq2$, there are, in general, no known $C^{r+1}$ solutions
to the prescribed Jacobian PDE, $\text{det\,}Df=h$, when $h$ is
of class $C^{r}$, $r\in\mathbb{Z}^{+}$ (see e.g. \cite[p.192]{CDK},
\textbf{\cite[p.324]{RY}}). 

In virtue of Lemma \hyperlink{l2}{2} below, the answer would be positive
if $g$ could be $C^{k+}$-smoothened near $0$, i.e. if one could
answer affirmatively the following

\emph{\smallskip{}
}\noun{Question }(\emph{Local $C^{k+}$-smoothing with $C^{1}$-closeness}):
Given any volume preserving $C^{k}$ map $g:\mathbb{B}^{n}\longrightarrow\mathbb{R}^{n}$,
$k\geq2$ an integer, is there arbitrarily $C^{1}$-close to it another
volume preserving $C^{k}$ map $\widehat{g}:\mathbb{B}^{n}\longrightarrow\mathbb{R}^{n}$
which is $C^{k,\alpha}$ near $0$ (for some $0<\alpha<1)$ and satisfies
$\text{supp}(\widehat{g}-g)\subset\subset\mathbb{B}^{n}\,$? 
\begin{proof}
(Theorem \hyperlink{th7}{7}). We shall reduce the proof to that of
Lemma \hyperlink{l2}{2} below. Fix a covering system $\{B_{l}\}_{l\leq\widetilde{m}}$,
$i,j$ for $f$, here called $\varUpsilon$, as in Section \hyperlink{se5.2}{5.2}
and $0<\epsilon\leq1$ such that $\mathcal{\mathscr{U}}_{\epsilon,\varUpsilon}(f)\subset\mathcal{U}$.
Let $\mathcal{U}_{0}=\mathcal{\mathscr{U}}_{\epsilon/2,\varUpsilon}(f)$.
Recall that, by definition of $\mathcal{\mathscr{U}}_{\epsilon,\varUpsilon}(f)$,
the same covering system works for any $g\in\mathcal{\mathscr{U}}_{\epsilon,\varUpsilon}(f)$.
Let $g\in\mathcal{U}_{0}$. As one wishes, for each $x\in\varSigma$,
to be able to choose freely any chart representation $g_{x}$ around
$x$ where to perform the local linearization (getting $\widetilde{g_{x}}$),
we will need to estimate $\text{sup}_{M}\|Dg\|$ for all such $g$,
the supremum of $\|Dg(y)\|$ for all $y\in M$, over all possible
chart representations of $g$ around $y$ (see Section \hyperlink{se5.2}{5.2}).
The transitions between chart representations of $g$ around point
$x$ being of the form $g_{\widehat{j}\widehat{i},B}=\phi_{\widehat{j}j}\circ g_{ji;B}\circ\phi_{i\widehat{i}}$
(Section \hyperlink{se}{5.2.2}), one gets, as $\epsilon\leq1$, for
all $g\in\mathcal{U}_{0}$,
\[
\text{sup}_{M}\|Dg\|<c:=a^{2}(\text{sup}_{M}\|Df\|+1)
\]
where
\[
a=\underset{i,j\leq m}{\text{max}}\underset{\phi_{i}(V_{i}\cap V_{j})}{\text{sup}}\left\Vert D\phi_{ji}\right\Vert 
\]
$\phi_{ji}=\phi_{j}\circ\phi_{i}^{-1}$ being the chart transitions
of the atlas $(V_{i},\,\phi_{i})_{i\leq m}$. Note that we need not
concern with the $C^{0}$ norm of $\widetilde{g}-g$ since it becomes
as small as wished if $\text{supp}(\widetilde{g}-g$) is contained
in the disjoint union of sufficiently small open balls (on local charts)
centred at the points of $\varSigma$. This also guarantees that (2)
holds. Hence, only the distance between the derivatives of $\widetilde{g}$
and $g$ is of concern. Performing adequate translations in both domain
and target of each chart representation $g_{x}$ around $x$, it is
now easily seen that that the problem reduces to prove Lemma \hyperlink{l2}{2}
below and finding through it the constant $\chi=\chi(r,\alpha,c,n)$
and then let $\delta=\chi\epsilon_{0}$ where $\epsilon_{0}=\epsilon/2b$.
Here, $b\geq1$ is a multiplicative constant (to be determined below)
controlling the possible magnification of the distance $\text{\,}\left\Vert D\widetilde{g_{x}}(y)-Dg_{x}(y)\right\Vert $,
$y\in\text{supp}(g_{x}-\widetilde{g_{x}})$, when passing from $g_{x},\,\widetilde{g_{x}}$
to any other pair $\widehat{g},\,\widehat{\widetilde{g}}$ of comparable
chart representations of $g$ and $\widetilde{g}$ around $y$. This
will guarantee, in particular, that for $g\in\mathcal{U}_{0},$ $\left\Vert \widetilde{g}-g\right\Vert _{C^{1}}<\epsilon/2$
in the local metric induced on $\mathcal{\mathscr{U}}_{\epsilon,\varUpsilon}(f)$,
and therefore that one gets as wished
\[
\left\Vert \widetilde{g}-f\right\Vert _{C^{1}}\leq\left\Vert \widetilde{g}-g\right\Vert _{C^{1}}+\left\Vert g-f\right\Vert _{C^{1}}<\epsilon/2+\epsilon/2=\epsilon
\]
We now construct $\widetilde{g}$ and proceed to determine the constant
$b$ mentioned above. Since this is more subtle than it might seem
at first sight we do it with some detail. To simplify the exposition,
we identify a point $x$ in $M$ with its image $\phi_{i}(x)$ in
a local chart. We first select at will, for each $x\in\varSigma$,
a chart representation $g_{x}=g_{ji,D}$ of $g$ around $x$ and fix
a small closed ball $B_{x}$ centred at this point and contained in
the (open) domain $\phi_{i}(D$) of $g_{x}$, such that the $B_{x}$'s
are mutually disjoint (i.e. have mutually disjoint preimages in $M$).
Using Lemma \hyperlink{l2}{2} below, we find $\delta=\chi\epsilon_{0}$
where $\epsilon_{0}=\epsilon/2b$ and 
\[
b=n^{2}a\Big(a+(c+1)\underset{i,j\leq m}{\text{max}}\underset{\phi_{i}(V_{i}\cap V_{j})}{\text{sup}}\big(\|D^{2}\phi_{ji}\|+1\big)\Big)
\]
and then, for any given $A_{x}\in SL(n,\mathbb{R})$ as in the statement
of Theorem \hyperlink{th7}{7}, we find a volume preserving $C^{r,\alpha}$
(respect. $C^{\infty}$) diffeomorphism onto its image $\widetilde{g_{x}}:\phi_{i}(D)\rightarrow\phi_{j}(V_{j})$,
which is affine linearized by $A_{x}$ near $x$, and satisfies $\widetilde{g_{x}}(x)=g_{x}(x)$,
$\text{supp}(\widetilde{g_{x}}-g_{x})\subset B_{x}$ and $\left\Vert \widetilde{g_{x}}-g_{x}\right\Vert {}_{C^{1}}<\epsilon/2b$.
In this way we have $\widetilde{g}$ globally defined: $\widetilde{g}=\widetilde{g_{x}}$
in $B_{x}$ and $\widetilde{g}=g$ in $(\cup_{x\in\varSigma}B_{x})^{c}$
(again, we simplify the notation identifying $\widetilde{g_{x}}$
with the corresponding map in $M$ and $B_{x}$ with its preimage
in $M$). Now, let $\widehat{g}=\widehat{g}_{\widehat{j}\widehat{i},E}$
and $\,\widehat{\widetilde{g}}=\,\widehat{\widetilde{g}}{}_{\widehat{j}\widehat{i},E}$
be any other pair of comparable chart representations of $g$ and
$\widetilde{g}$ around the preimage $\widehat{y}=\phi_{i}^{-1}(y)$
in $M$ of $y\in\text{supp}(\widetilde{g_{x}}-g_{x})$. We claim that
\begin{equation}
\|D\widehat{\widetilde{g}}(\widehat{y})-D\widehat{g}(\widehat{y})\|<\epsilon/2
\end{equation}
as wished. From the expression giving the derivative under chart representation
transition,
\[
D\widehat{g}(\widehat{y})=D\phi_{\widehat{j}j}(g_{x}(y))\circ Dg_{x}(y)\circ D\phi_{i\widehat{i}}(\widehat{y}),\quad y=\phi_{i\widehat{i}}(\widehat{y})
\]
one gets that
\[
\|D\widehat{\widetilde{g}}(\widehat{y})-D\widehat{g}(\widehat{y})\|
\]
is less or equal than
\begin{equation}
\|D\phi_{\widehat{j}j}(\widetilde{g_{x}}(y))\circ D\widetilde{g_{x}}(y)-D\phi_{\widehat{j}j}(g_{x}(y))\circ Dg_{x}(y)\|\cdot\|D\phi_{i\widehat{i}}(\widehat{y})\|
\end{equation}

\noindent (i) If $\widetilde{g_{x}}(y)=g_{x}(y)$, then the norm on
the left equals
\[
\|D\phi_{\widehat{j}j}(g_{x}(y))\|\cdot\|D\widetilde{g_{x}}(y)-Dg_{x}(y)\|
\]
hence
\[
\|D\widehat{\widetilde{g}}(\widehat{y})-D\widehat{g}(\widehat{y})\|\;\leq a^{2}\left\Vert \widetilde{g_{x}}-g_{x}\right\Vert {}_{C^{1}}<a^{2}\epsilon/2b<\epsilon/2
\]

\noindent (ii) If $\widetilde{g_{x}}(y)\neq g_{x}(y)$, then denoting
by $\mathcal{M}(y)=[a_{kl}]$ the $n\times n$ matrix in (4.2) inside
the norm on the left, we have for the constant $a$ defined above,

\noindent 
\[
\|D\widehat{\widetilde{g}}(\widehat{y})-D\widehat{g}(\widehat{y})\|\leq a\left\Vert \mathcal{M}(y)\right\Vert 
\]
We estimate the absolute value of the entries $a_{kl}$ and then use
$\left\Vert \mathcal{M}(y)\right\Vert \leq n\,\text{max}|a_{kl}|$.
Denoting by $\phi^{k}$ the $k$-th component of $\phi_{\widehat{j}j}$
and by $\{e_{i}\}_{i\leq n}$ the canonical base of $\mathbb{R}^{n}$,
\[
|a_{kl}|=\left|\sum_{i=1}^{n}\partial_{e_{i}}\phi^{k}(\widetilde{g_{x}}(y))\cdot\partial_{e_{l}}\widetilde{g_{x}}^{i}(y)-\partial_{e_{i}}\phi^{k}(g_{x}(y))\cdot\partial_{e_{l}}g_{x}^{i}(y)\right|
\]
Now, the key step is to write (using the mean value theorem),
\begin{equation}
\partial_{e_{i}}\phi^{k}(\widetilde{g_{x}}(y))=\partial_{e_{i}}\phi^{k}(g_{x}(y))+\partial_{u}\partial_{e_{i}}\phi^{k}(z)\cdot|\widetilde{g_{x}}(y)-g_{x}(y)|
\end{equation}
where $z$ is some point in the interior of segment $[\widetilde{g_{x}}(y),\,g_{x}(y)]$
and $u$ is the direction $\frac{\widetilde{g_{x}}(y)-g_{x}(y)}{|\widetilde{g_{x}}(y)-g_{x}(y)|}$.
Since 
\[
|\partial_{e_{l}}\widetilde{g_{x}}^{i}(y)|\leq\left\Vert \widetilde{g_{x}}\right\Vert _{C^{1}}<\left\Vert g_{x}\right\Vert _{C^{1}}+\epsilon/2b<c+1
\]
a simple calculation shows that
\begin{equation}
|a_{kl}|\leq n\Big(a+(c+1)\underset{i,j\leq m}{\text{max}}\underset{\phi_{i}(V_{i}\cap V_{j})}{\text{sup}}\left\Vert D^{2}\phi_{ji}\right\Vert \Big)\left\Vert \widetilde{g_{x}}-g_{x}\right\Vert _{C^{1}}
\end{equation}
and since $\left\Vert \widetilde{g_{x}}-g_{x}\right\Vert {}_{C^{1}}<\epsilon/2b$,
inequality (4.1) follows. The problem with the above reasoning is
that the segment $[\widetilde{g_{x}}(y),\,g_{x}(y)]$ might \emph{not
}be contained in the domain $\phi_{j}(V_{j}\cap V_{\widehat{j}})$
of $\phi_{\widehat{j}j}$ and reducing $\text{supp}(\widetilde{g_{x}}-g_{x})$
to an even smaller neighbourhood of $x$ will not help if $g(x)\in\overline{V_{\widehat{j}}}\setminus V_{\widehat{j}}$.
To overcome this difficulty we use the fact that the atlas of $M$
is contained in a larger atlas (see the Convention, Section \hyperlink{se2}{2}):
there is a small $\varrho>0$ such that, for every chart domain $V_{k}$,
$\text{sup}\|D^{2}\varPhi_{kj}\|$ evaluated in the $\varrho$-neighbourhood
$\varDelta_{kj}$ of $\phi_{j}(V_{k}\cap V_{j}$) is smaller than
$\text{sup}\|D^{2}\phi_{kj}\|+1$ in $\phi_{j}(V_{k}\cap V_{j})$.
If necessary, we then reduce the radius of the closed ball $B_{x}$
even further so that $g_{x}(B_{x})\subset\phi_{j}(V_{j})$ has diameter
smaller than $\varrho$. As $y\in\text{supp}(\widetilde{g_{x}}-g_{x})\subset B_{x}$,
both $g_{x}(y)$ and $\widetilde{g_{x}}(y)$ are contained in $g_{x}(B_{x})$,
thus the segment $[\widetilde{g_{x}}(y),\,g_{x}(y)]$ is entirely
contained in $\varDelta_{\widehat{j}j}$. It is thus enough to replace
$\phi=\phi_{\widehat{j}j}$ in (4.3) by its extension $\varPhi_{\widehat{j}j}$
and replace $\|D^{2}\phi_{ji}\|$ by $\|D^{2}\phi_{ji}\|+1$ in (4.4),
as it is done in the definition of $b$. We have thus reduced the
proof of Theorem \hyperlink{th7}{7} to that of Lemma \hyperlink{l2}{2}
below. 
\end{proof}
From now on we assume that $\mathbb{\mathbb{R}}^{n}$ and all its
subsets are endowed with the standard volume form $dx_{1}\wedge\ldots\wedge dx_{n}$.
We write $A\subset\subset B$ for ``$A$ is compact and contained
in $B$''.

\hypertarget{l2}{}
\begin{lem}
\emph{(Uniform conservative local linearization) }Given any $n,\,r\in\mathbb{Z}^{+}$,
$0<\alpha<1$ and $c\geq1$ there exists a constant $\chi=\chi(r,\alpha,c,n)>0$
such that: given any 

\emph{(a)} $0<\epsilon_{0}\leq1$;

\emph{(b)} any volume preserving $C^{r,\alpha}$ diffeomorphism onto
its image\emph{
\[
f:\eta\mathbb{B}^{n}\longrightarrow\mathbb{R}^{n},\,\,\,\eta>0
\]
}such that $f(0)=0$ and 
\[
\left\Vert Df(0)\right\Vert \leq c
\]

\emph{(c)} any $A\in SL(n,\mathbb{R})$ such that
\[
\left\Vert A-Df(0)\right\Vert <\chi\epsilon_{0}
\]
there exists a volume preserving $C^{r,\alpha}$ diffeomorphism onto
its image\emph{ $f_{A}:\eta\mathbb{B}^{n}\rightarrow\mathbb{R}^{n}$
}satisfying
\begin{enumerate}
\item $f_{A}=A$ near $0$;
\item $\text{\emph{supp}}(f_{A}-f)\subset\subset\eta\mathbb{B}^{n}$;
\item $\left\Vert f_{A}-f\right\Vert _{C^{1}}<\epsilon_{0}$.
\end{enumerate}
Furthermore, if $f$ is $C^{\infty}$ then so is $f_{A}$. 
\end{lem}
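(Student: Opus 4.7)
The plan is to mimic the homothety trick used in the proof of Theorem 6, replacing the divergence-canceling step by the prescribed-Jacobian step with control of support. For a small $0 < \lambda < \eta$ to be fixed below, rescale by the homothety $y \mapsto \lambda y$ and set $f_\lambda(y) = \lambda^{-1} f(\lambda y)$ on $\mathbb{B}^n$. Since $f$ is volume preserving and $f(0) = 0$, so is $f_\lambda$, with $Df_\lambda(0) = Df(0)$. Because $f \in C^{r,\alpha}$ with $r \geq 1$ and $f(0) = 0$, Taylor's formula gives $f_\lambda(y) - Df(0)\,y = O(\lambda^\alpha)$ uniformly on $\mathbb{B}^n$, while $Df_\lambda(y) = Df(\lambda y)$ converges uniformly to $Df(0)$. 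Arguing as in the Claim in Step 1 of the proof of Theorem 6, for $\lambda$ small enough one obtains
\[
\|f_\lambda - A\|_{C^1(\mathbb{B}^n)} < 2 \|A - Df(0)\| < 2 \chi \epsilon_0.
\]

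At this unit scale, fix once and for all a smooth cutoff $\xi : \mathbb{B}^n \to [0,1]$ with $\xi \equiv 1$ on $\frac{1}{3}\mathbb{D}^n$ and $\xi \equiv 0$ near $\partial \mathbb{B}^n$, and form the interpolation $g = \xi A + (1 - \xi) f_\lambda$. Then $g = A$ on $\frac{1}{3}\mathbb{D}^n$, $g = f_\lambda$ near $\partial \mathbb{B}^n$, and $\|g - f_\lambda\|_{C^1}$ is linearly bounded by $\|A - f_\lambda\|_{C^1}$. Both $A \in SL(n,\mathbb{R})$ and $f_\lambda$ are volume preserving, so $h := \det Dg - 1$ is compactly supported in the transition annulus $\varOmega \subset \frac{2}{3}\mathbb{B}^n \setminus \frac{1}{3}\mathbb{D}^n$, belongs to $C^{r-1,\alpha}$ (resp.\ $C^\infty$), and is $C^0$-small whenever $\|A - Df(0)\|$ is. For $\chi$ small enough $g$ is a diffeomorphism onto its image, and since $g = f_\lambda$ near $\partial\mathbb{B}^n$ one has $g(\mathbb{B}^n) = f_\lambda(\mathbb{B}^n)$, which yields the crucial compatibility
\[
\int_\varOmega h\, dx = \int_{\mathbb{B}^n} (\det Dg - 1)\, dx = \text{vol}(g(\mathbb{B}^n)) - \text{vol}(\mathbb{B}^n) = 0.
\]

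The main obstacle is to produce, in the same regularity class as $g$, a diffeomorphism $\varphi : \mathbb{B}^n \to \mathbb{B}^n$ satisfying $\det D\varphi = 1 + h$ with $\text{supp}(\varphi - \text{Id})$ contained in a slightly enlarged annulus $\varOmega' \subset\subset \mathbb{B}^n \setminus \{0\}$, together with a linear estimate $\|\varphi - \text{Id}\|_{C^1} \leq K \|h\|_{C^0}$ where $K$ depends only on $\varOmega, \varOmega'$. In the Hölder setting this is exactly the support-controlled prescribed-Jacobian theorem of \cite{TE} (the classical Dacorogna-Moser result of \cite{DM} is insufficient, as warned in point (ii) of the Introduction). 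In the $C^\infty$ setting one runs Moser's flow method on the family of volume forms $(1 + th)\,dx$, integrating the time-dependent vector field furnished by the support-controlled solution to a divergence equation on $\mathbb{B}^n$ whose compactly supported solvability inside $\varOmega'$ is the flat-domain analogue of Lemma 1 and uses precisely $\int h = 0$. Setting $f_A := g \circ \varphi^{-1}$ at unit scale yields a volume preserving diffeomorphism of the required regularity, equal to $A$ on $\frac{1}{3}\mathbb{D}^n$, equal to $f_\lambda$ near $\partial \mathbb{B}^n$, with $\|f_A - f_\lambda\|_{C^1}$ linearly bounded by $\|A - Df(0)\|$ through constants depending only on $r, \alpha, c, n$.

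Finally, scale back. Define the desired $f_A$ on $\lambda \mathbb{B}^n$ as the pullback of the unit-scale solution by the contraction $y \mapsto \lambda^{-1} y$, and extend by $f$ on $\eta\mathbb{B}^n \setminus \lambda \mathbb{B}^n$; the two definitions match on a neighbourhood of $\partial(\lambda \mathbb{B}^n)$ because $g$ equals $f_\lambda$ there and $\varphi$ is the identity there. Exactly as in Step 3 of the proof of Theorem 6, since $\lambda < 1$ the homothetic contraction does not increase the $C^1$-norm of $f_A - f$, so the unit-scale estimate carries down and yields $\|f_A - f\|_{C^1} < \epsilon_0$ once $\chi$ has been chosen sufficiently small to absorb all the multiplicative constants accumulated along the way. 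Conclusions (1)--(3) then hold by construction, and smoothness is preserved throughout if $f$ is $C^\infty$.
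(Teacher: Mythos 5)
Your proposal follows essentially the same route as the paper's proof: homothety rescaling to the unit ball, affine interpolation between $A$ and $f_{\lambda}$ (the paper's $A\circ g_{A,\lambda}$ is literally the map $(1-\xi)A+\xi f_{\lambda}$ since $A$ is linear, the factoring out of $A^{-1}$ being only an estimate-organizing device exploiting the compactness of $\{D\in SL(n,\mathbb{R}):\,\|D\|\leq c\}$), the zero-mean compatibility obtained from equality of image volumes, the support-controlled prescribed-Jacobian correction via \cite{TE} in the H\"older case and Moser's flow method in the $C^{\infty}$ case, composition with $\varphi^{-1}$, and scaling back down. The one imprecision is your claimed estimate $\|\varphi-\text{Id}\|_{C^{1}}\leq K\|h\|_{C^{0}}$: the theorem of \cite{TE} actually controls $\|\varphi-\text{Id}\|_{C^{1}}$ by the $C^{0,\alpha}$ norm of $\det Dg-1$, so you must also track the $\alpha$-H\"older seminorm of $Df_{\lambda}-Df(0)$ (which scales as $\lambda^{\alpha}\left[Df\right]_{\alpha}\rightarrow0$), exactly as the paper does in steps (A.1)--(A.2) and (C.2)--(C.4).
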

\begin{proof}
We treat the cases (\hyperlink{(A)}{A}) $f\in C^{r,\alpha}\setminus C^{\infty}$
and (\hyperlink{(B)}{B}) $f\in C^{\infty}$ separately. In order
to make the construction of $f_{A}$ more transparent, we start by
establishing in Case (A), through a continuity reasoning, the existence
for each $\epsilon_{0}>0$ of a $\delta=\delta(r,\alpha,c,n,\epsilon_{0})>0$
such that (1) - (3) hold if $\|A-Df(0)\|<\delta$, and analogously
$\delta=\delta(c,n,\epsilon_{0})>0$ is found in Case (B). Finally,
the linear dependence of $\delta$ on $\epsilon_{0}$ for $0<\epsilon_{0}\leq1$
is established in each case (see (C) and (D) below), getting $\delta=\chi\epsilon_{0}$
for some constant $\chi=\chi(r,\alpha,c,n)>0$ in case (A) and for
$\chi=\chi(c,n)>0$ in Case (B). We then take $\chi$ as the minimum
of these two values. \smallskip{}

\hypertarget{(A)}{}

\noindent \textbf{(A).} \noun{Case}\emph{ }$f\in C^{r,\alpha}\setminus C^{\infty}$.
\smallskip{}

The following auxiliary fact follows readily from the compactness
of 
\[
SL_{c}:=\{D\in SL(n,\mathbb{R}):\,\left\Vert D\right\Vert \leq c\},\,\,c\geq1
\]
and the continuity of the the composition operator for matrices in
relation to the standard norm. Together with Fact \hyperlink{F2}{2}
below, it will ultimately permit to find,\textbf{ }for given $\epsilon_{0}>0$
and $c\geq1$, a single $\delta$ working simultaneously for all $f$
satisfying (b). Proofs of both Facts with linear estimates are given
in (C).

\hypertarget{F1}{}
\begin{fact}
For any $n\in\mathbb{Z}^{+}$, $\epsilon>0$ and $c\geq1$ there is
$\delta>0$ such that: given any $A\in SL(n,\mathbb{R})$ and $D\in SL_{c}$
\[
\left\Vert A-D\right\Vert <\delta\Longrightarrow\left\Vert A^{-1}\circ D-\mathcal{\text{\emph{Id}}}\right\Vert <\epsilon
\]
\end{fact}
The precise $\epsilon_{0}-\delta$ chain establishing Lemma \hyperlink{l2}{2}
can be easily reconstructed from the following reasoning, which makes
the structure of the proof more transparent. The continuity of the
addition and multiplication operators in relation to the $C^{r,\alpha}$
norm and that of the composition and inversion operators in relation
to the $C^{1}$ norm will be systematically used without mention. 

While Lemma \hyperlink{l2}{2} is a $C^{1}$-closeness result, we
will need to work with the $C^{1,\alpha}$ norm until step (A.2) in
order to guarantees that the volume correcting diffeomorphism $\varphi^{-1}$
is of class $C^{r,\alpha}$ and $C^{1}$-close to $\text{Id}$. Then
we return to the standard Whitney $C^{1}$ norm using $\left\Vert \cdot\right\Vert _{C^{1}}\leq n\left|\cdot\right|_{1}\leq n\left|\cdot\right|_{1,\alpha}$
(see Section \hyperlink{se5.1}{5.1}). 

For $h\in C^{r,\alpha}(\overline{\mathbb{B}^{n}},\mathbb{R}^{n})$,
$r\in\mathbb{Z}^{+}$, $0<\alpha\leq1$, we adopt the $C^{r,\alpha}$
norm corresponding to that of Section \hyperlink{se3.2}{3.2} (for
$h\in C^{r,\alpha}(\overline{\mathbb{B}^{n}})$ the definition is
the same but the component superscript $i$ disappears). This is equivalent
to the standard Whitney-Hölder $C^{r+\alpha}$ norm $\left\Vert \cdot\right\Vert _{C^{r,\alpha}}$.
\[
\left|h\right|_{r,\alpha;\mathbb{B}^{n}}=\underset{\substack{i;\,|\sigma|=r}
}{\mbox{max}}\left(\left|h\right|_{r;\mathbb{B}^{n}},\,\left[\partial^{\sigma}h^{i}\right]_{\alpha;\mathbb{B}^{n}}\right)
\]

\smallskip{}

\hypertarget{(A.1)}{}

\noindent (\textbf{A.1}) \emph{\noun{Reducing to the case of diffeomorphisms
with domain}}\emph{ $\mathbb{B}^{n}$ $C^{1,\alpha}$}\emph{\noun{-close
to}}\emph{ }$\text{Id}$ \emph{\noun{and}} $A=\text{Id}$. Let $0<\lambda<\text{min}(1,\eta)$.
For each $f$ of class $C^{r,\alpha}$ satisfying (b), re-scale $f|_{\lambda\mathbb{B}^{n}}$
to the unit ball getting a volume preserving $C^{r,\alpha}$ diffeomorphism
onto its image
\[
\begin{array}{llll}
f_{\lambda}: & \mathbb{B}^{n} & \longrightarrow & \mathbb{R}^{n}\\
 & z & \longmapsto & \lambda^{-1}f(\lambda z)
\end{array}
\]
One has,
\begin{equation}
\left|f_{\lambda}-Df(0)\right|_{1,\alpha;\mathbb{B}^{n}}\xrightarrow[\lambda\rightarrow0]{}0
\end{equation}
(Up to the $C^{1}$ norm, the reasoning is the same as in the proof
of Theorem \hyperlink{th6}{6}. Let $\{e_{j}\}_{j\leq n}$ be the
canonical base of $\mathbb{R}^{n}$. Writing $\partial_{j}$ for $\partial_{e_{j}}$,
one has for the partial derivatives of the components $f_{\lambda}^{i}$
of $f_{\lambda}$,
\[
\begin{array}{lll}
\\
\underset{x,y\in\mathbb{B}^{n};x\neq y}{\mbox{sup}}\frac{\left|\partial_{j}f_{\lambda}^{i}(y)-\partial_{j}f_{\lambda}^{i}(x)\right|}{\left|y-x\right|^{\alpha}} & = & \underset{x,y\in\mathbb{B}^{n};x\neq y}{\mbox{sup}}\lambda^{\alpha}\frac{\left|\partial_{j}f^{i}(\lambda y)-\partial_{j}f^{i}(\lambda x)\right|}{\left|\lambda y-\lambda x\right|^{\alpha}}\\
 & \,\\
 & \leq & \lambda^{\alpha}\left|f\right|_{1,\alpha;\lambda\mathbb{B}^{n}}\xrightarrow[\lambda\rightarrow0]{}0
\end{array}
\]
thus establishing (4.5)). For each $A\in SL(n,\mathbb{R})$ let 
\[
h_{A,\lambda}=A^{-1}\circ f{}_{\lambda}
\]
By (4.5) (see e.g. \cite[p.384]{CDK}), 
\begin{equation}
\left|h_{A,\lambda}-A^{-1}\circ Df(0)\right|_{1,\alpha;\mathbb{B}^{n}}\xrightarrow[\lambda\rightarrow0]{}0
\end{equation}
Fix $\xi\in C^{\infty}(\mathbb{B}^{n};[0,1])$ (the same for all $f$
and $A$) with $\xi=0$ in $\frac{1}{3}\mathbb{D}^{n}$ and $\xi=1$
in $\mathbb{B}^{n}\setminus\frac{2}{3}\mathbb{B}^{n}$ and define
\[
g_{A,\lambda}=\text{Id}+\xi(h_{A,\lambda}-\text{Id})
\]
Then, noting that for $L\in L(n,\mathbb{R})$, $\left|L\right|_{1,\alpha;\mathbb{B}^{n}}\leq\|L\|$,
by (4.6)
\begin{equation}
\left|h_{A,\lambda}-\text{Id}\right|_{1,\alpha;\mathbb{B}^{n}}\xrightarrow[\lambda\rightarrow0]{}\left|A^{-1}\circ Df(0)-\text{Id}\right|_{1,\alpha;\mathbb{B}^{n}}\leq\left\Vert A^{-1}\circ Df(0)-\text{Id}\right\Vert 
\end{equation}
and by Fact \hyperlink{F1}{1} above, as $Df(0)\in SL_{c}$, for $\delta$
small the norm on the right is uniformly small for all $f$ satisfying
(b) and all $A$ satisfying (c), hence for $\lambda$ small enough
\begin{equation}
\left|g_{A,\lambda}-\text{Id}\right|_{1,\alpha;\mathbb{B}^{n}}\text{ is small}
\end{equation}
and, in particular, $g_{A,\lambda}$ is a diffeomorphism of $\mathbb{B}^{n}$
onto its image.

\smallskip{}

\hypertarget{(A.2)}{}

\noindent (\textbf{A.2}) \noun{Correcting the volume distortion.}
Dropping the subscripts for simplicity, let
\[
\theta=\theta_{A,\lambda}=\text{det}\,Dg_{A,\lambda}
\]
Then by (4.8), 
\begin{equation}
\text{(i)}\quad\left|\theta-\text{1}\right|_{0,\alpha;\mathbb{B}^{n}}\text{ is small}
\end{equation}
and (ii)$\int_{\mathbb{B}^{n}}\theta=\text{meas\,}g_{A,\lambda}(\mathbb{B}^{n})=\text{meas\,}\mathbb{B}^{n}$
and (iii) $\theta=1$ in $\mathscr{C}=\frac{1}{3}\mathbb{D}^{n}\,\cup\,(\mathbb{B}^{n}\setminus\frac{2}{3}\mathbb{B}^{n})$.
Now, it is easily seen that we can apply \cite[Theorem 4]{TE} (with
$\gamma=\alpha$), to get $\varphi\in\text{Diff}^{r,\alpha}(\mathbb{B}^{n})$
such that $\text{det\,}D\varphi=\theta$ and $\varphi=\text{Id in \ensuremath{\mathscr{C}}}$,
with
\begin{equation}
\left|\varphi-\text{Id}\right|_{1;\mathbb{B}^{n}}\text{ small}
\end{equation}
Then,
\[
\widetilde{g_{A,\lambda}}=g_{A,\lambda}\circ\varphi^{-1}
\]
is a volume preserving $C^{r,\alpha}$ diffeomorphism of $\mathbb{B}^{n}$
onto its image with 
\[
\begin{cases}
\widetilde{g_{A,\lambda}}=\text{Id} & \text{in\, }{\textstyle \frac{1}{3}}\mathbb{D}^{n}\\
\widetilde{g_{A,\lambda}}=h_{A,\lambda} & \text{in\, }{\textstyle \mathbb{B}^{n}\setminus\frac{2}{3}}\mathbb{B}^{n}
\end{cases}
\]
and
\begin{equation}
\left\Vert \widetilde{g_{A,\lambda}}-\text{Id}\right\Vert _{C^{1};\mathbb{B}^{n}}\text{ \,is small}
\end{equation}

\smallskip{}

\hypertarget{(A.3)}{}

\noindent (\textbf{A.3})\textbf{ }\noun{Back to the general case.}
Setting 
\[
\widetilde{f_{A,\lambda}}=A\circ\widetilde{g_{A,\lambda}}
\]
it is immediate to verify that
\[
\begin{cases}
\widetilde{f_{A,\lambda}}=A\text{ \,near \,0}\\
\text{supp\,}(\widetilde{f_{A,\lambda}}-f_{\lambda})\subset\subset\mathbb{B}^{n}
\end{cases}
\]
By (4.11) and Fact \hyperlink{F2}{2} below, for $\delta$ (and $\lambda)$
small 
\begin{equation}
\left\Vert \widetilde{f_{A,\lambda}}-Df(0)\right\Vert _{C^{1};\mathbb{B}^{n}}<\epsilon_{0}/2
\end{equation}
for all $f$ satisfying (b) and all $A$ satisfying (c). For a proof
of Fact \hyperlink{F2}{2} with linear estimate see (C.8) below.

\hypertarget{F2}{}
\begin{fact}
For any $n\in\mathbb{Z}^{+}$, $\epsilon>0$ and $c\geq1$ there is
$\delta>0$ such that: given any $A\in L(n,\mathbb{R})$, $D\in SL_{c}$
and a $C^{1}$ map $g:\mathbb{B}^{n}\rightarrow\mathbb{R}^{n}$, 
\[
\left\Vert A-D\right\Vert ,\,\left\Vert g-\text{\emph{Id}}\right\Vert _{C^{1};\mathbb{B}^{n}}<\delta\Longrightarrow\left\Vert A\circ g-D\right\Vert _{C^{1};\mathbb{B}^{n}}<\epsilon
\]
\end{fact}
\hypertarget{(A.4)}{}

\noindent (\textbf{A.4}) \emph{\noun{Scaling down to the real scale.}}\emph{
}It remains to scale down $\widetilde{f_{A,\lambda}}$ back to the
real scale. Let
\[
\begin{array}{llll}
f_{A,\lambda}: & \lambda\mathbb{B}^{n} & \longrightarrow & \mathbb{R}^{n}\\
 & z & \longmapsto & \lambda\widetilde{f_{A,\lambda}}(\lambda^{-1}z)
\end{array}
\]
Since the $C^{1}$ norm does not increase under contracting homothetic
conjugation and $\lambda<1$, 
\begin{equation}
\left\Vert f_{A,\lambda}-Df(0)\right\Vert _{C^{1};\lambda\mathbb{B}^{n}}\leq\left\Vert \widetilde{f_{A,\lambda}}-Df(0)\right\Vert _{C^{1};\mathbb{B}^{n}}<\epsilon_{0}/2
\end{equation}
Taking $\lambda$ even smaller if necessary, we can further guarantee
that
\[
\left\Vert f-Df(0)\right\Vert _{C^{1};\lambda\mathbb{B}^{n}}<\epsilon_{0}/2
\]
Therefore, as supp$\,(f_{A,\lambda}-f|_{\lambda\mathbb{B}^{n}})\subset\subset\lambda\mathbb{B}^{n}$,
extending $f_{A}:=f_{A,\lambda}$ by $f$ to the whole $\mathbb{\eta B}^{n}$
we finally get by the triangle inequality that
\[
(3)\quad\left\Vert f_{A}-f\right\Vert _{C^{1};\mathbb{\eta B}^{n}}<\epsilon_{0}
\]
and it is immediate to check that $f_{A}$ is $C^{r,\alpha}$ and
satisfies all the conclusions of Lemma \hyperlink{l2}{2}.\smallskip{}

\hypertarget{(B)}{}

\noindent \textbf{\emph{\noun{(B). }}}\noun{Case}\emph{ $f\in C^{\infty}$}.
Fixed $n$, $c$ and $\epsilon_{0}$, both the determination of $\delta=\delta(c,n,\epsilon_{0})$
and the construction of $f_{A}$ are similar to those in case (\hyperlink{(A)}{A}),
except that the volume correcting diffeomorphism $\varphi$ in (\hyperlink{(A.2)}{A.2})
must be obtained by a different method, as using \cite[Theorem 4]{TE},
there is no guarantee that the solution to $\text{det}\,D\varphi=\theta$
is smooth when $\theta$ is smooth (in the later case, we get a solution
$\varphi_{r}$ of class $C^{r}$, for each $r\in\mathbb{Z}^{+}$,
but a priori nothing guarantees that these $\varphi_{r}$ coincide
to form a $\,\text{\ensuremath{C^{\infty}} }$diffeomorphism. Reciprocally,
\cite[Theorem 5]{TE} and \cite[Lemma 10.4]{CDK} employed below cannot
be applied in case (\hyperlink{(A)}{A}) since it does not provide
the necessary gain of regularity, from $C^{r-1,\alpha}$(determinant
$\theta$) to $C^{r,\alpha}$ (diffeomorphism $\varphi$)). Here all
functions involved are smooth and 
\[
\left|f_{\lambda}-Df(0)\right|_{2;\mathbb{B}^{n}}\xrightarrow[\lambda\rightarrow0]{}0
\]
(up to the $C^{1}$ norm see (A.1) above; for each multiindex $\sigma$
of order 2, one has for the partial derivatives of the components
of $f_{\lambda}$,
\[
\underset{x\in\mathbb{B}^{n}}{\mbox{sup}}\left|\partial^{\sigma}f_{\lambda}^{i}(x)\right|=\underset{x\in\mathbb{B}^{n}}{\text{sup}}\,\lambda\left|\partial^{\sigma}f^{i}(\lambda x)\right|\leq\lambda\left|f\right|_{2;\lambda\mathbb{B}^{n}}\xrightarrow[\lambda\rightarrow0]{}0
\]
Therefore,
\begin{equation}
\left|h_{A,\lambda}-A^{-1}\circ Df(0)\right|_{2;\mathbb{B}^{n}}\xrightarrow[\lambda\rightarrow0]{}0
\end{equation}
and consequently, reasoning as in (A),
\[
\left|g_{A,\lambda}-\text{Id}\right|_{2;\mathbb{B}^{n}}\text{ is small}
\]
 hence
\[
\left|\theta-1\right|_{1;\mathbb{B}^{n}}\text{ is small}
\]
Then, we apply \cite[Theorem 5]{TE} and \cite[Lemma 10.4]{CDK} to
get a $C^{\infty}$ solution diffeomorphism to $\text{det\,}D\varphi=\theta$
with $\varphi=\text{Id in \ensuremath{\mathscr{C}}}$ and 
\[
\left|\varphi-\text{Id}\right|_{1;\mathbb{B}^{n}}\text{ small}
\]
It can be verified that in \cite[Theorem 5]{TE}, if the volume form
$\theta$ is smooth, the solution diffeomorphism $\varphi$ is also
smooth. This follows from the fact that the solution to the linearized
problem $\text{div}\,u=\theta-1$ in \cite[Theorem 3]{TE} is smooth
since it depends only on $\theta$ and not on $r,\,\alpha$ (see \cite[Remark 3 and Footnote 3]{TE})
and from the the way $\varphi$ is found (integrating the time dependent
vector field $u_{t}=u/((1-t)\theta+t$), c.f. \cite[Lemma 2]{DM},
\cite[p.209-210]{CDK}). One then uses the estimate in \cite[Theorem 3]{TE}
and that in \cite[Lemma 10.4]{CDK} to get the estimate $\text{\ensuremath{\left|\varphi-\text{Id}\right|_{1;\mathbb{B}^{n}}}}\leq C\left|\theta-1\right|_{1;\mathbb{B}^{n}}$,
for some constant $C=C(n)>0$. The construction then follows that
of case (\hyperlink{(A)}{A}). As shown in (D) below, the more general
(and abstract) result \cite[Lemma 10.4]{CDK} can actually entirely
replace the use of \cite[Theorem 5]{TE} above.

\medskip{}

\hypertarget{(C)}{}

\noindent \textbf{(C).} \noun{Linear dependence $\delta=\chi\epsilon_{0}$
for $0<\epsilon_{0}\leq1$ in the case $f\in C^{r,\alpha}\setminus C^{\infty}$.
}The case of $f\in C^{\infty}$ is similar, the changes needed being
indicated in (D) below. As in Section \hyperlink{se3.1}{3.1},\textbf{
}we shall establish a finite chain of linear bounds finally leading
to the determination of the constant $\chi$. We emphasize that $\left|\cdot\right|_{r,\alpha}$
in (C.2) - (C.5) is the $C^{r,\alpha}$ norm defined in (A) above
and $\left\Vert \cdot\right\Vert _{C^{1}}$ in (C.5) - (C.8) is the
standard Whitney $C^{1}$ norm (Section \hyperlink{se5.1}{5.1}) in
which Lemma \hyperlink{l2}{2} is formulated.

We start by establishing the actual estimate in Fact \hyperlink{F1}{1}.

\medskip{}

\noindent (\textbf{C.1})~\emph{~Given any} $n\in\mathbb{Z}^{+}$\emph{,}
$c\geq1$\emph{,} $A\in SL(n,\mathbb{R})$ \emph{and} $Df(0)\in SL_{c}$,
\[
\left\Vert A-Df(0)\right\Vert <\delta\leq1\Longrightarrow\left\Vert A^{-1}\circ Df(0)-\text{Id}\right\Vert <(c+1)^{n-1}\delta=C_{1}(c,n)\delta
\]
We have
\[
\|A^{-1}\circ Df(0)-\text{Id}\|=\|A^{-1}\circ(Df(0)-A)\|<\|A^{-1}\|\cdot\delta
\]
Since $A\in SL(n,\mathbb{R})$ and $\|A\|<c+1$, looking at its polar
decomposition one sees that
\[
\underset{x\in\mathbb{S}^{n-1}}{\text{min}}|A(x)|>(c+1)^{-n+1}
\]
thus $\|A^{-1}\|<(c+1)^{n-1}$ and the assertion follows.

\smallskip{}

In what follows, $C$, $C'$ and $C''$ denote auxiliary generic constants
(varying from step to step), whose existence follows from standard
Hölder estimates \cite[p.342 and 366]{CDK} or is evident from the
context.\medskip{}

\noindent (\textbf{C.2})~~$\left\Vert A^{-1}\circ Df(0)-\text{Id}\right\Vert <\delta\Longrightarrow\left|g_{A,\lambda}-\text{Id}\right|_{1,\alpha;\mathbb{B}^{n}}<C_{2}(n)\delta$.~
The partition function $\xi$ is fixed for each dimension $n$ and
$\left|\xi\right|_{1,\alpha;\mathbb{B}^{n}}\leq C(n)\left|\xi\right|_{2;\mathbb{B}^{n}}$
(\cite[p.342]{CDK}), therefore one has, by (4.7), for $\lambda$
small enough,
\[
\begin{array}{llll}
 & \left|g_{A,\lambda}-\text{Id}\right|_{1,\alpha;\mathbb{B}^{n}} & = & \left|\xi(h_{A,\lambda}-\text{Id})\right|_{1,\alpha;\mathbb{B}^{n}}\\
 &  & \leq & C'(n)\left|\xi\right|_{1,\alpha;\mathbb{B}^{n}}\left|h_{A,\lambda}-\text{Id}\right|_{1,\alpha;\mathbb{B}^{n}}\\
 &  & \leq & C''(n)\left|h_{A,\lambda}-\text{Id}\right|_{1,\alpha;\mathbb{B}^{n}}\\
 &  & < & C''(n)\delta=C_{2}(n)\delta
\end{array}
\]

\noindent (\textbf{C.3})~~$\left|g_{A,\lambda}-\text{Id}\right|_{1,\alpha;\mathbb{B}^{n}}<\delta\leq1\Longrightarrow\left|\theta-1\right|_{1,\alpha;\mathbb{B}^{n}}<C_{3}(n)\delta$.
~One has,
\[
\left|\theta-1\right|_{0,\alpha;\mathbb{B}^{n}}=\text{max}\left(\left|\theta-1\right|_{0;\mathbb{B}^{n}},\,\left[\theta\right]_{\alpha;\mathbb{B}^{n}}\right)
\]
Clearly $\left|\theta-1\right|_{0;\mathbb{B}^{n}}<C(n)\delta$ for
$\theta-1$ is the sum of $n!$ terms of the form
\[
{\color{blue}\pm}\big((\widehat{a}_{1}+\delta_{1})(\widehat{a}_{2}+\delta_{2})\cdots(\widehat{a}_{n}+\delta_{n})-\widehat{a}_{1}\widehat{a}_{2}\cdots\widehat{a}_{n}\big)
\]
where each $\widehat{a}_{i}=0$ or $1$ is an entry of the $\text{Id}$
matrix and $|\delta_{i}|<\delta\leq1$, thus $\left|\theta-1\right|{}_{0;\mathbb{B}^{n}}<n!(2^{n}-1)\delta$.

To simplify the notation, we write $g$ for the generic component
$g_{A,\lambda}^{i}$ of $g_{A,\lambda}$ and $\partial^{k}g$ for
its generic partial derivative of order $k$. 

In abridged notation, the determinant $\theta=\text{det}\,Dg_{A,\lambda}$
is the sum of $n!$ monomials of the form ${\color{blue}\pm}(\partial g)^{n}$.
Using the the following estimate for the $\alpha$-Hölder seminorm
of the product of scalar functions (\cite[p.366]{CDK}),
\[
\left[h_{1}\cdots h_{n}\right]_{\alpha}\leq n\,\underset{j}{\text{max}}\left|h_{j}\right|_{0}^{n-1}\cdot\underset{j}{\text{max}}\left[h_{j}\right]_{\alpha}
\]
and since by hypothesis 
\[
\underset{\mathbb{B}^{n}}{\text{sup}}|\partial g|\leq\left|g_{A,\lambda}\right|_{1,\alpha;\mathbb{B}^{n}}<\text{\ensuremath{\left|\text{Id}\right|_{1,\alpha;\mathbb{B}^{n}}}}+1=2
\]
 and $\left[\partial g\right]_{\alpha;\mathbb{B}^{n}}<\delta$, one
has (in abridged form)
\[
\left[\theta\right]_{\alpha;\mathbb{B}^{n}}\leq\sum^{n!}\left[(\partial g)^{n}\right]_{\alpha;\mathbb{B}^{n}}<n!n2^{n-1}\delta=C'(n)\delta.
\]
thus (C.3) holds.\medskip{}

\noindent (\textbf{C.4})~ Let $\varOmega=\mathbb{B}^{n}\,\setminus\frac{1}{4}\mathbb{D}^{n}$
and $U=(\mathbb{D}^{n}\setminus\frac{2}{3}\mathbb{B}^{n})\cup(\frac{1}{3}\mathbb{D}^{n}\setminus\frac{1}{4}\mathbb{B}^{n})$.
Let $\widehat{\epsilon}=\widehat{\epsilon}(r,\alpha,n)=\widehat{\epsilon}(r,\alpha,U,\varOmega)$
and $C_{4}=C_{4}(r,\alpha,n)=c(r,\alpha,U,\varOmega)$ be the corresponding
constants in \cite[Theorem 4]{TE}. One has for the solution diffeomorphism
$\varphi\in\text{Diff}^{r,\alpha}(\mathbb{B}^{n})$ obtained via \cite[Theorem 4]{TE}
in (A.2) above, 
\[
\left|\theta-1\right|_{0,\alpha;\mathbb{B}^{n}}<\delta\leq\epsilon(r,\alpha,n)\Longrightarrow\left|\varphi-\text{Id}\right|_{1;\mathbb{B}^{n}}<C_{4}(r,\alpha,n)\delta
\]

\noindent (\textbf{C.5})~~We now return to the Whitney $C^{1}$
norm. Since $\left\Vert \cdot\right\Vert _{C^{1}}\leq n\left|\cdot\right|_{1}$
for maps $\mathbb{B}^{n}\rightarrow\mathbb{R}^{n}$ (Section \hyperlink{se5.1}{5.1}),
one has
\[
\left|\varphi-\text{Id}\right|_{1;\mathbb{B}^{n}}<\delta\Longrightarrow\left\Vert \varphi-\text{Id}\right\Vert _{C^{1};\mathbb{B}^{n}}<n\delta=C_{5}(n)\delta
\]

\noindent (\textbf{C.6})~~Let $C_{6}=3$. Then, 
\[
\left\Vert \varphi-\text{Id}\right\Vert _{C^{1};\mathbb{B}^{n}}<\delta\leq1/2\Longrightarrow\left\Vert \varphi^{-1}-\text{Id}\right\Vert _{C^{1};\mathbb{B}^{n}}<C_{6}\delta.
\]
Since $\varphi^{-1}$ is a diffeomorphism of $\mathbb{B}^{n}$ onto
itself, one has
\[
\begin{array}{lll}
\left\Vert \varphi^{-1}-\text{Id}\right\Vert _{C^{1};\mathbb{B}^{n}}=\left\Vert (\varphi-\text{Id)}\circ\varphi^{-1}\right\Vert _{C^{1};\mathbb{B}^{n}} & \leq & \left\Vert \varphi-\text{Id}\right\Vert _{C^{1};\mathbb{B}^{n}}\big(1+\left\Vert \varphi^{-1}\right\Vert _{C^{1};\mathbb{B}^{n}}\big)\\
 & < & \delta\big(1+\left\Vert \varphi^{-1}\right\Vert _{C^{1};\mathbb{B}^{n}}\big)
\end{array}
\]
\[
\begin{array}{lll}
\left\Vert \varphi-\text{Id}\right\Vert _{C^{1};\mathbb{B}^{n}}<1/2 & \Longrightarrow & \underset{u\in\mathbb{S}^{n-1}}{\text{min}}|D\varphi(x;u)|>1/2\quad\forall x\in\mathbb{B}^{n}\\
 & \Longrightarrow & \underset{\mathbb{B}^{n}}{\text{sup}}\|D\varphi^{-1}\|<2
\end{array}
\]
therefore, as $\left\Vert \varphi^{-1}\right\Vert _{C^{0};\mathbb{B}^{n}}=1$
it follows that $\left\Vert \varphi^{-1}\right\Vert _{C^{1};\mathbb{B}^{n}}<2,$
thus (C.6) holds.

\medskip{}

\noindent (\textbf{C.7})~~Let $C_{7}=4$. Then, 
\[
\left\Vert \varphi^{-1}-\text{Id}\right\Vert _{C^{1};\mathbb{B}^{n}},\:\left\Vert g_{A,\lambda}-\text{Id}\right\Vert _{C^{1};\mathbb{B}^{n}}<\delta\leq1\implies\left\Vert g_{A,\lambda}\circ\varphi^{-1}-\text{Id}\right\Vert _{C^{1};\mathbb{B}^{n}}<C_{7}\delta
\]
Let $\widehat{g}:=g_{A,\lambda}$. Then
\[
\begin{array}{lll}
\left\Vert \widehat{g}\circ\varphi^{-1}-\text{Id}\right\Vert _{C^{1};\mathbb{B}^{n}} & \leq & \left\Vert (\widehat{g}-\text{Id})\circ\varphi^{-1}\right\Vert _{C^{1};\mathbb{B}^{n}}+\left\Vert \varphi^{-1}-\text{Id}\right\Vert _{C^{1};\mathbb{B}^{n}}\\
 & < & \left\Vert \widehat{g}-\text{Id}\right\Vert _{C^{1};\mathbb{B}^{n}}\big(1+\left\Vert \varphi^{-1}\right\Vert _{C^{1};\mathbb{B}^{n}}\big)+\delta\\
 & < & \delta\big(1+\left\Vert \text{Id}\right\Vert _{C^{1};\mathbb{B}^{n}}+1\big)+\delta=4\delta
\end{array}
\]

\noindent (\textbf{C.8})~~Let $C_{8}=C_{8}(c)=c+2$. Then, 
\[
\left\Vert A-Df(0)\right\Vert ,\,\left\Vert g_{A,\lambda}-\text{Id}\right\Vert _{C^{1};\mathbb{B}^{n}}<\delta\leq1\Longrightarrow\left\Vert A\circ g_{A,\lambda}-Df(0)\right\Vert _{C^{1};\mathbb{B}^{n}}<C_{8}\delta
\]
We use the following basic estimate: given any linear map $L\in L(n,\mathbb{R})$
and any $C^{1}$-bounded map $h:\mathbb{B}^{n}\rightarrow\mathbb{R}^{n}$,
\[
\left\Vert L\circ h\right\Vert _{C^{1};\mathbb{B}^{n}}\leq\|L\|\cdot\left\Vert h\right\Vert _{C^{1};\mathbb{B}^{n}}
\]
Now, writing $\widehat{g}$ for $g_{A,\lambda}$ and $D$ for $Df(0)$
\[
\begin{array}{lll}
\left\Vert A\circ\widehat{g}-D\right\Vert _{C^{1};\mathbb{B}^{n}} & = & \left\Vert (A-D)\circ\widehat{g}+D\circ(\widehat{g}-\text{Id})\right\Vert _{C^{1};\mathbb{B}^{n}}\\
 & \leq & \|A-D\|\cdot\left\Vert \widehat{g}\right\Vert _{C^{1};\mathbb{B}^{n}}+\|D\|\cdot\left\Vert \widehat{g}-\text{Id}\right\Vert _{C^{1};\mathbb{B}^{n}}\\
 & < & \delta(\left\Vert \text{Id}\right\Vert _{C^{1};\mathbb{B}^{n}}+1)+c\delta=(c+2)\delta
\end{array}
\]

\noindent (\textbf{C.9})~~Let $\widehat{\epsilon}=\widehat{\epsilon}(r,\alpha,n)$
be the constant obtained in (C.4). Note that we may assume that all
constants $C_{k}$ above are $\geq2$. Then, following the above chain
of linear estimates it is immediate to verify that the constant $\chi=\chi(r,\alpha,c,n)$
below satisfies the conclusions of Lemma \hyperlink{l2}{2} when $f\in C^{r,\alpha}\setminus C^{\infty}$:
\[
\chi=\frac{1}{C_{1}C_{2}C_{3}}\text{min}\,\Big(\widehat{\epsilon},\frac{1}{C_{4}C_{5}C_{6}C_{7}C_{8}}\Big)
\]

It remains only to verify that the $C^{r,\alpha}$ map $g_{A,\lambda}=\text{Id}+\xi(h_{A,\lambda}-\text{Id})$
is in fact a diffeomorphism. It is easily seen (see below) that 
\begin{equation}
\left\Vert g_{A,\lambda}-\text{Id}\right\Vert _{C^{1};\mathbb{B}^{n}}\leq1/4\implies g_{A,\lambda}\text{ is a diffeomorphism onto its image}
\end{equation}
Now, $\delta=\chi\epsilon_{0}\leq\chi$ since $\epsilon_{0}\leq1$,
therefore 
\[
\left\Vert g_{A,\lambda}-\text{Id}\right\Vert _{C^{1};\mathbb{B}^{n}}\leq n\left\Vert g_{A,\lambda}-\text{Id}\right\Vert _{1,\alpha;\mathbb{B}^{n}}\leq nC_{2}C_{1}\chi<{\textstyle 1/4}
\]
as $C_{5}=n$ and all constants $C_{k}$ are $\geq2$, thus
\[
nC_{2}C_{1}\chi\leq\frac{1}{C_{3}C_{4}C_{6}C_{7}C_{8}}\leq2^{-5}.
\]
It remains to prove (4.15): it is immediate from the hypothesis that
the derivative is everywhere nonsingular, thus only the injectivity
of $\widehat{g}:=g_{A,\lambda}$ needs to be established. We show
that for any $x,y\in\mathbb{B}^{n}$, $|\widehat{g}(y)-\widehat{g}(x)|\geq\frac{1}{6}|y-x|$.
The hypothesis implies that for any $v\in\mathbb{R}^{n}$, $|D\widehat{g}(0;v)|\geq\frac{2}{3}|v|$.
Let $h(x)=\widehat{g}(x)-\widehat{g}(0)-D\widehat{g}(0;x).$ Then,
\[
\underset{\mathbb{B}^{n}}{\text{sup}}\|Dh\|=\underset{\mathbb{B}^{n}}{\text{sup}}\|D\widehat{g}-D\widehat{g}(0)\|\leq\underset{\mathbb{B}^{n}}{\text{sup}}\|D\widehat{g}-\text{Id}\|+\underset{\mathbb{B}^{n}}{\text{sup}}\|\text{Id}-D\widehat{g}(0)\|\leq{\textstyle \frac{1}{4}}+{\textstyle \frac{1}{4}}={\textstyle \frac{1}{2}}
\]
thus, for any $x,\,y\in\mathbb{B}^{n}$, $|h(y)-h(x)|\leq\frac{1}{2}|y-x|$,
hence
\[
\begin{array}{lll}
|\widehat{g}(y)-\widehat{g}(x)| & = & |D\widehat{g}(0;y-x)+h(y)-h(x)|\\
 & \geq & |D\widehat{g}(0;y-x)|-|h(y)-h(x)|\\
 & \geq & \frac{2}{3}|y-x|-\frac{1}{2}|y-x|=\frac{1}{6}|y-x|
\end{array}
\]
Therefore $\widehat{g}=g_{A,\lambda}$ is injective and the proof
of (C) is complete.\medskip{}

\noindent \textbf{(D). }\noun{Linear dependence $\delta=\chi\epsilon_{0}$
for $0<\epsilon_{0}\leq1$ in the case $f\in C^{\infty}$.}\medskip{}

\noindent (\textbf{D.1})\textbf{~~}The estimate in (C.1) above carries
unchanged to the present $C^{\infty}$ case.\medskip{}

\noindent (\textbf{D.2})~~From (4.14), reasoning as in (C.2) now
applying the estimate for the $\left|\cdot\right|_{r}$ norm of the
product (end of Section \hyperlink{se5.1}{5.1}), we immediately get,
\[
\left\Vert A^{-1}\circ Df(0)-\text{Id}\right\Vert <\delta\Longrightarrow\left|g_{A,\lambda}-\text{Id}\right|_{2;\mathbb{B}^{n}}<2^{2}\left|\xi\right|_{2;\mathbb{B}^{n}}\delta=C_{2}(n)\delta
\]

\noindent (\textbf{D.3})\textbf{~~}$\left|g_{A,\lambda}-\text{Id}\right|_{2;\mathbb{B}^{n}}<\delta\leq1\Longrightarrow\left|\theta-1\right|_{1;\mathbb{B}^{n}}<C_{3}(n)\delta$\textbf{$.$}
The estimate $\left|\theta-1\right|_{0;\mathbb{B}^{n}}<n!(2^{n}-1)\delta$
was obtained in (C.3). In the abridged notation adopted there, the
components $\partial_{i}\theta=(\nabla\theta)^{i}$ of $\nabla\theta$
are of the form
\[
\sum^{n!}\sum^{n}{\color{blue}\pm}(\partial^{2}g)(\partial g)^{n-1}
\]
Since by hypothesis, $\text{sup}_{\mathbb{B}^{n}}|\partial g|<2$
and $\text{sup}_{\mathbb{B}^{n}}|\partial^{2}g|<\delta$ it follows
that 
\[
\text{\ensuremath{\underset{i}{\text{max}\,}}}\underset{\mathbb{B}^{n}}{\text{sup}}|\partial_{i}\theta|\leq n!n2^{n-1}\delta
\]
which together with the estimate above for $\left|\theta-1\right|_{0;\mathbb{B}^{n}}$
finally gives
\[
\left|\theta-1\right|_{1;\mathbb{B}^{n}}<n!n2^{n-1}\delta=C_{3}(n)\delta.
\]

\noindent (\textbf{D.4})\textbf{~~$\left|\theta-1\right|_{1;\mathbb{B}^{n}}<\delta\Longrightarrow\left|\theta-1\right|_{0,\frac{1}{2};\mathbb{B}^{n}}<\sqrt{2n}\delta=C_{4}(n)\delta$}.
Reasoning as in Section \hyperlink{se5.1}{5.1} (equivalence of norms
$\left|\cdot\right|_{r}$ and $\left\Vert \cdot\right\Vert _{C^{r}}$),
we have
\[
\underset{i}{\text{max}}\underset{\mathbb{B}^{n}}{\,\text{sup}}\left|\partial_{i}\theta\right|<\delta\Longrightarrow\underset{\mathbb{B}^{n}}{\text{sup}}\left\Vert \nabla\theta\right\Vert <\sqrt{n}\delta
\]
thus, by the mean value inequality,
\[
\begin{array}{lll}
\left[\theta-1\right]_{\frac{1}{2};\mathbb{B}^{n}} & = & \underset{x,y\in\mathbb{B}^{n};\,x\neq y}{\text{sup}}\frac{\,\left|\theta(y)-\theta(x)\right|\,}{\sqrt{\left|y-x\right|\,}}\\
 &  & \,\\
 & \leq & \underset{x,y\in\mathbb{B}^{n}}{\text{sup}}\sqrt{\left|y-x\right|\,}\underset{\mathbb{B}^{n}}{\text{\,sup}}\left\Vert \nabla\theta\right\Vert <\sqrt{2n}\delta
\end{array}
\]
Therefore, since $\left|\theta-1\right|_{0;\mathbb{B}^{n}}<\delta$,
(D.4) follows.

\medskip{}

\noindent (\textbf{D.5})~~$\left|\theta-1\right|_{0,\frac{1}{2};\mathbb{B}^{n}}<\delta\Longrightarrow\left|u\right|_{1;\mathbb{B}^{n}}<C_{5}(n)\delta$.
Let $u\in\mathfrak{X}^{\infty}(\mathbb{B}^{n})$ be the solution to
\[
\begin{cases}
\text{div}\,u=\theta-1\\
u=0 & \text{in }\mathscr{C}
\end{cases}
\]
obtained via \cite[Theorem 3]{TE} (see (A.2) for the meaning of $\mathscr{C}$
and (B) for the regularity of $u$), which satisfies
\[
\left|u\right|_{1;\mathbb{B}^{n}}\leq C(n)\left|\theta-1\right|_{0,\frac{1}{2};\mathbb{B}^{n}}<C(n)\delta
\]

\noindent (\textbf{D.6})\textbf{~~}$\left|\theta-1\right|_{1;\mathbb{B}^{n}},\,\left|u\right|_{1;\mathbb{B}^{n}}<\delta\leq1/2\Longrightarrow\left|\varphi-\text{Id}\right|_{1;\mathbb{B}^{n}}<C_{6}(n)\delta$.
For $t\in[0,1]$ let $f_{t}=(1-t)\theta+t$ and $u_{t}=u/f_{t}$.
Using \cite[Lemma 10.4]{CDK} with $\varOmega=\mathbb{B}^{n}$, $r=1$,
$\alpha=0$ and $T=1$, and since $u=0$ in $\mathscr{C}$, we obtain
a solution $\varphi:=\varphi_{1}\in\text{Diff}^{\infty}(\mathbb{B}^{n})$
to
\[
\begin{cases}
\text{det}\,D\varphi=\theta\\
\varphi=\text{Id} & \text{in \,}\mathscr{C}
\end{cases}
\]
(for the regularity of $\varphi$ see (B) above). Moreover (see below),
\begin{equation}
\left|\theta-1\right|_{1;\mathbb{B}^{n}},\quad\left|u\right|_{1;\mathbb{B}^{n}}<\delta\leq1/2\Longrightarrow\left|u_{t}\right|_{1;\mathbb{B}^{n}}<8\delta\leq4\quad\forall\,t\in[0,1]
\end{equation}
Therefore (still by \cite[Lemma 10.4]{CDK}),
\[
\left|\varphi-\text{Id}\right|_{1;\mathbb{B}^{n}}\leq C(n)\int_{0}^{1}\left|u_{t}\right|_{1;\mathbb{B}^{n}}\,dt\leq C(n)8\delta
\]
It remains to show that (4.16) holds:\smallskip{}

(0) $\text{max}_{t\in[0,1]}\left|u_{t}\right|_{0;\mathbb{B}^{n}}\leq2\left|u\right|_{0;\mathbb{B}^{n}}<2\delta$
since by hypothesis 
\[
\underset{t\in[0,1]}{\text{min}}\,\underset{\mathbb{B}^{n}}{\text{inf}}\,f_{t}>1/2
\]

(1) the partial derivatives of the components of $u_{t}$ are of the
form
\[
\partial_{j}u_{t}^{i}=\frac{(\partial_{j}u^{i})((1-t)\theta+t)-u^{i}(1-t)\partial_{j}\theta}{((1-t)\theta+t)^{2}}
\]
therefore
\[
\underset{i,j;\,t\in[0,1]}{\text{max}}\,\underset{\mathbb{B}^{n}}{\text{sup}}\left|\partial_{j}u_{t}^{i}\right|\leq{\textstyle (\frac{3\delta}{2}+\frac{\delta}{2})/\frac{1}{4}}=8\delta
\]
since $\left|\partial_{j}u^{i}\right|,\,\left|u^{i}\right|<\delta$,
$\text{max}_{t\in[0,1]}\text{sup}_{\mathbb{B}^{n}}\left|f_{t}\right|<3/2$,
$t\in[0,1]$ and $\text{sup}_{\mathbb{B}^{n}}\left|\partial_{j}\theta\right|<1/2$. 

\medskip{}

\noindent (\textbf{D.7})~~From this point onward the estimates are
the same as in (C.5) - (C.8) and accordingly we reindex the constants
$C_{5}$, $C_{6}$, $C_{7}$, $C_{8}$ there as $C_{7}$, $C_{8}$,
$C_{9}$, $C_{10}$, respectively. Again, we may assume that $C_{k}\geq2$
for $1\leq k\leq10$ and following the chain of estimates it is immediate
to verify that the constant $\chi=\chi(c,n)$ below satisfies the
conclusions of Lemma \hyperlink{l2}{2} when $f\in C^{\infty}$:
\[
\chi=\frac{1}{C_{1}C_{2}C_{3}\cdots C_{10}}
\]
Since $C_{3}>n$, reasoning as in (C.9) it is immediate to verify
that also in this case $g_{A,\lambda}$ is in fact a diffeomorphism
onto its image. The proof of Lemma \hyperlink{l2}{2}  is complete.
\end{proof}
\hypertarget{se5}{}

\section{Appendix}

\hypertarget{se5.1}{}

\subsection{$C^{r}$ norms of vector fields and maps.}

Let $\left|\cdot\right|$ be the Euclidean norm on $\mathbb{R}^{n}.$
Fix a (finite) regular $C^{\infty}$ atlas $(V_{j},\phi_{j})_{j\leq m}$
of $M$. Let $A\subset M$ be an open set and $X\in\mathfrak{X}^{r}(A)$\emph{
}a\emph{ }vector field of class $C^{r}$, $r\in\mathbb{Z}^{+},$ defined
on $A$\emph{. }On each (partial) local chart associated with $A$,
$(V_{j}\cap A,\phi_{j}|_{V_{j}\cap A})$, $X$ has an expression
\[
X_{j}:\,\phi_{j}(V_{j}\cap A)\longrightarrow\mathbb{R}^{n}
\]
$X$ is $C^{r}$-\emph{bounded} on $A$ (see Section \hyperlink{se2}{2})
if the Whitney $C^{r}$ norm of $X$ is finite:
\[
\begin{array}{c}
\left\Vert X\right\Vert _{C^{r};A}:=\underset{j;\,0\leq k\leq r}{\mbox{max}}\,\underset{\phi_{j}(V_{j}\cap A)}{\mbox{sup}}\left\Vert D^{k}X_{j}\right\Vert <\infty\end{array}
\]
As the atlas is regular, $C^{r}$ vector fields defined on $M$ are
always $C^{r}$-bounded. Here, $\Vert D^{0}X_{j}(x)\Vert:=|X_{j}(x)|$
and $\Vert D^{k}X_{j}(x)\Vert:=\mbox{max}_{u_{i}\in\mathbb{S}^{n-1}}|D^{k}X_{j}(x;\,u_{1},\ldots,u_{k})|$).
In Section \hyperlink{se3}{3} we work with the equivalent norm
\[
\left|X\right|_{r;A}:=\underset{\substack{i,j;\,0\leq|\sigma|\leq r}
}{\mbox{max}}\,\underset{\phi_{j}(V_{j}\cap A)}{\mbox{sup}}\left|\partial^{\sigma}X_{j}^{i}\right|
\]
where $X_{j}=(X_{j}^{1},\ldots,X_{j}^{n})$ and $\sigma$ runs over
all multiindices $\sigma=(\sigma_{1},\ldots,\sigma_{n})\in\mathbb{N}_{0}^{n}$
for which $\left|\sigma\right|=\sum\sigma_{i}\leq r$. It is easily
seen that
\[
\left|\cdot\right|_{r;A}\leq\left\Vert \cdot\right\Vert _{C^{r};A}\leq n^{(r+1)/2}\left|\cdot\right|_{r;A}
\]
noting that $\mbox{max}_{x\in\mathbb{S}^{n-1}}\sum_{i=1}^{n}\left|x_{i}\right|$
is attained when $\left|x_{1}\right|=\cdots=\left|x_{n}\right|=n^{-1/2}$,
thus implying that $\lambda\leq\Vert D^{k}X_{j}(x)\Vert\leq n^{(k+1)/2}\lambda$
for $\lambda=\mbox{max}_{i;\,|\sigma|=k}|\partial^{\sigma}X_{j}^{i}(x)|$.

With the obvious changes, the same definitions are adopted for the
$C^{r}$ norms of maps $X\in C^{r}(A;\mathbb{R}^{q})$ (the local
chart expressions of $X$ being then of the form $X_{j}=X\circ\phi_{i}^{-1}$),
provided we restrict to the subspace of those that are $C^{r}$ bounded\emph{.}
In this context, if $h\in C^{r}(A)$ and either $X\in\mathfrak{X}^{r}(A)$
or $X\in C^{r}(A;\mathbb{R}^{q}),$ then by Leibniz product rule,
\[
\left|hX\right|_{r;A}\leq2^{r}\left|h\right|_{r;A}\left|X\right|_{r;A}
\]
an inequality systematically used in Section \hyperlink{se3.1}{3.1}.

\hypertarget{se5.2}{}

\subsection{Local $C^{1}$-metrization of $\text{Diff}_{\mu}^{\,r,\alpha}(M)$
and chart representations }
\begin{defn}
(We recall the convention $C^{r,0}:=C^{r}$ and $C^{\infty,\alpha}:=C^{\infty}$).
Fix a conservative regular atlas $(V_{i},\phi_{i})_{i\leq m}$ of
$M$ as before (see the Convention, Section \hyperlink{se2}{2}).
Given $r\in\mathbb{Z}^{+}\cup\{\infty\}$, $0\leq\alpha\leq1$, $\text{Diff}_{\mu}^{\,r,\alpha}(M)$
is the group (under composition) of the $C^{r,\alpha}$ diffeomorphisms
$f$ of $M$ onto itself preserving the volume form, $\omega=f^{*}(\omega)$,
or equivalently, the Lebesgue measure $\mu$ induced by it on $M$.
These are the bijections $f:M\rightarrow M$ satisfying: for each
pair $i,j\leq m$,
\end{defn}
\begin{enumerate}
\item the map 
\[
f_{ji}=\phi_{j}\circ f\circ\phi_{i}^{-1}:\,\phi_{i}(V_{i}\cap f^{-1}(V_{j}))\longrightarrow\mathbb{R}^{n}
\]
is of class $C^{r,\alpha}$, and the same holds for $f^{-1}$ in place
of $f$;
\item $\text{det}\,Df_{ji}\equiv1$.
\end{enumerate}

\subsubsection{Covering system for $f\in\text{\emph{Diff}}_{\mu}^{\,r,\alpha}(M)$
and local $C^{1}$ metrization }

Given $f\in\text{Diff}_{\mu}^{\,r,\alpha}(M)$, by the compactness
of $M$ one can find a finite open cover $B_{l\leq\widetilde{m}}$
of $M$ and two maps
\[
i,\,j:\{1,\ldots,\widetilde{m}\}\longrightarrow\{1,\ldots,m\}
\]
such that
\[
\overline{B_{l}}\subset V_{i(l)}\text{ \,\,and \,\,}f(\overline{B_{l}})\subset V_{j(l)}
\]
The triple $B_{l\leq\widetilde{m}}$, $i$, $j$, is called a \emph{covering
system for} $f$ and will be denoted by $\varUpsilon$. For each $\epsilon>0$,
let $\mathcal{\mathscr{U}}_{\epsilon,\varUpsilon}(f)$ be the set
of those $g\in\text{Diff}_{\mu}^{\,r,\alpha}(M)$ such that for all
$l\leq\widetilde{m}$,
\[
g(\overline{B_{l}})\subset V_{j(l)}\quad\text{and}\quad\left\Vert g_{l}-f_{l}\right\Vert ,\;\left\Vert D(g_{l}-f_{l})\right\Vert <\epsilon
\]
where
\[
g_{l}=\phi_{j(l)}\circ g\circ\phi_{i(l)}^{-1}|_{B_{l*}}\qquad\text{and}\qquad B_{l*}:=\phi_{i(l)}(B_{l})
\]
, $f_{l}$ being defined in the same way. These $\mathcal{\mathscr{U}}_{\epsilon,\varUpsilon}(f)$
induce a $C^{1}$-topology on $\text{Diff}_{\mu}^{\,r,\alpha}(M)$
(see e.g. \cite[p.262]{PR}), making it locally metrizable by the
standard Whitney $C^{1}$ norm: for any $h,\,g\in\mathcal{\mathscr{U}}_{\epsilon,\varUpsilon}(f)$,
\[
d_{C^{1}}(h,g):=\left\Vert h-g\right\Vert _{C^{1}}=\underset{l\leq\widetilde{m}}{\text{max}}\left\Vert h_{l}-g_{l}\right\Vert _{C^{1}}
\]
Clearly, a covering system for $f$ also works for any $g\in\mathcal{\mathscr{U}}_{\epsilon,\varUpsilon}(f)$,
$\epsilon>0$.

\hypertarget{se5.2.2}{}

\subsubsection{Chart representations of $f|_{B}$.}

Given $f\in\text{Diff}_{\mu}^{\,r,\alpha}(M)$ suppose that $B\subset M$
is an open set such that $\overline{B}\subset V_{i}$ and $f(\overline{B})\subset V_{j}$
for some $i,j\leq m$. Then 
\[
\widehat{f}=f_{ji,B}=\left.\phi_{j}\circ f\circ\phi_{i}^{-1}\right|_{\phi_{i}(B)}
\]
is a \emph{chart representation of }$f|_{B}$ with domain $\phi_{i}(B)\subset\phi_{i}(V_{i})$
and target $\phi_{j}(V_{j})$. If $x\in B$ we call $\widehat{f}$
a \emph{chart representation of }$f$ \emph{around} $x$. To simplify
the notation, we abbreviate by $x$ the point $\phi_{i}(x)$ representing
$x$ in the domain of $\widehat{f}$. 

\hypertarget{se5.2.3}{}

\subsubsection{Comparable chart representations. }

Given any other $g\in\text{Diff}_{\mu}^{\,r,\alpha}(M)$ such that
$g(\overline{B})\subset V_{j}$, $\widehat{f}=f_{ji,B}$ and $\widehat{g}=g_{ji,B}$
are called \emph{comparable chart representations of $f$ and $g$
on $B$ }(alternatively, \emph{comparable chart representations of
}$f|_{B}$ \emph{and} $g|_{B}$). By the continuity of the composition
operator in relation to the $C^{1}$ norm, if $\|\widehat{f}-\widehat{g}\|_{C^{1}}$
is small then $\|\widetilde{f}-\widetilde{g}\|_{C^{1}}$ is small
for any other pair of comparable chart representations of $f|_{B}$
and $g|_{B}$. Thus, a $C^{1}$ perturbation of a chart representation
of $f|_{B}$ results in $C^{1}$ perturbations of all other chart
representations of $f|_{B}$, the transition between two such chart
representations being explicitly given by
\[
f_{\widehat{j}\widehat{i},B}=\phi_{\widehat{j}j}\circ f_{ji;B}\circ\phi_{i\widehat{i}}
\]
$\phi_{kl}=\phi_{k}\circ\phi_{l}^{-1}$ being the chart transition
maps.

\hypertarget{se5.3}{}

\subsection{Statement and proof of Lemma 3.}

\hypertarget{l3}{}
\begin{lem}
Let $M$ be a (second countable, Hausdorff) connected, boundaryless
$C^{\infty}$ $n$-manifold. Given a compact subset $K$ with an open
neighbourhood $U\subsetneq M$ such that $U\setminus K$ is connected,
there is a compact n-submanifold $V$ with connected $C^{\infty}$
boundary such that $K\subset\text{\emph{int}}\,V$ and \emph{$V\subset U$.}
\end{lem}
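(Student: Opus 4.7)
The plan is to build $V$ in two stages. First, produce any compact $n$-submanifold $N\subset U$ with smooth boundary and $K\subset\mathrm{int}\,N$: take a smooth Urysohn function $\psi\colon M\to[0,1]$ with $\psi\equiv 1$ on a neighbourhood of $K$ and $\mathrm{supp}\,\psi\subset U$, and by Sard's theorem pick a regular value $c\in(0,1)$ sufficiently close to $1$; then $N:=\psi^{-1}([c,1])$ is a compact $n$-submanifold with smooth boundary satisfying $K\subset\mathrm{int}\,N\subset N\subset U$. It then remains to reduce the number of components of $\partial N$ to one by ambient surgery.

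The hypothesis that $U\setminus K$ is connected enters at this stage. Suppose $\partial N$ has $k\geq 2$ components and fix two distinct ones $\gamma,\gamma'$, with points $p\in\gamma$, $q\in\gamma'$. Since $U\setminus K$ is open and connected (hence path-connected), choose a smooth embedded arc $\alpha\colon[0,1]\to U\setminus K$ from $p$ to $q$, transverse to $\partial N$ (standard approximation plus transversality, using $n\geq 2$). The preimage $\alpha^{-1}(\partial N)=\{0=t_0<t_1<\cdots<t_m=1\}$ is finite, and since $\alpha(t_0)\in\gamma$ and $\alpha(t_m)\in\gamma'$ lie in distinct components of $\partial N$, some consecutive pair $\alpha(t_l),\alpha(t_{l+1})$ must as well. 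Replace $\alpha$ by the sub-arc $\alpha|_{[t_l,t_{l+1}]}$: it still avoids $K$, still joins two distinct components of $\partial N$, and by transversality its open interior lies entirely either in $M\setminus N$ or in $\mathrm{int}\,N\setminus K$.

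In the first case, attach a $1$-handle $D^{1}\times D^{n-1}$ along a sufficiently thin tubular neighbourhood of $\alpha$, which is contained in $U$ by openness of $U$ and compactness of $\alpha([0,1])$; in the second case, drill out from $N$ a thin tubular neighbourhood of $\alpha$ disjoint from $K$ (possible since $\alpha([0,1])$ is a compact subset of the open set $U\setminus K$). After smoothing corners, the resulting compact $n$-submanifold $N'$ has smooth boundary, still satisfies $K\subset\mathrm{int}\,N'\subset N'\subset U$, and has $k-1$ boundary components, since the two chosen components are merged into one via the attached handle or the lateral surface of the drilled tube. Induction terminates after $k-1$ steps and yields the desired $V$. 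The chief subtlety is the necessity of allowing both ``outward'' (handle) and ``inward'' (drilling) surgery: for an unfortunately chosen $N$ it can happen that no arc joining two given boundary components lies entirely in $U\setminus\mathrm{int}\,N$, so handle attachment alone does not suffice; the hypothesis that $U\setminus K$ is connected is what permits us to find an arc that, even if forced to pass through $\mathrm{int}\,N$, at least avoids $K$, so that inward drilling stays legitimate.
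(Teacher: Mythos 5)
Your proof is correct and follows essentially the same strategy as the paper's: produce an initial compact $n$-submanifold with smooth boundary squeezed between $K$ and $U$, then merge boundary components one at a time by either attaching a handle or drilling a tube along an embedded arc in $U\setminus K$, the two cases corresponding exactly to the paper's ``solid handle attaching'' and ``worm-hole drilling.'' The only (immaterial) difference is that you obtain the initial submanifold via a Urysohn function and Sard's theorem rather than by smoothing the corners of a union of closed Euclidean balls covering $K$.
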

\begin{proof}
Take a finite cover $B_{1},\ldots,B_{j}$ of $K$ by open Euclidean
balls\footnote{$D\subset M$ is an Euclidean open ball if there is some local chart
($V_{i},\phi_{i})$ such that $\overline{D}\subset V_{i}$ and, up
to a translation, $\phi_{i}(D)=\lambda\mathbb{B}^{n}$ for some $\lambda>0$.} such that $V_{0}:=\cup_{i\leq j}\overline{B_{i}}\subset U$. Slightly
perturbing the $\overline{B_{i}}$'s if necessary, we can assume that
the smooth $(n-1)$-spheres $\partial\overline{B_{i}}$ intersect
transversely so that $V_{0}$ is a compact $n$-submanifold with piecewise
smooth boundary. Smooth out the ``edges'' of $V_{0}$ so that the
resulting $n$-submanifold $V_{1}$ has $C^{\infty}$ boundary and
still satisfies $K\subset\mbox{int}\,V_{1}$ and $V_{1}\subset U$
(this is clearly possible since the smoothing can be performed arbitrarily
near $\partial V_{0}$). Assume that $\partial V_{1}$ is disconnected
(otherwise we are done). The idea is to use the connectedness of $U\setminus K$
to connect successively and inside $U\setminus K$, all the components
of $\partial V_{1},$ thus creating a new submanifold satisfying the
desired conclusions. Needless to say, care must be taken to avoid
the intercrossing of the ``connecting tubes'', the nature of the
``connecting surgery'' depending, at each step $i$, on whether
the tube connecting two components of $\partial V_{i}$ is contained
in $V_{i}\setminus K$ or in $U\setminus\mbox{int}\,V_{i}$ (see below). 

There is no difficulty in showing that given any component $b_{0}$
of $\partial V_{1}$ there is a distinct component $b_{1}$ and an
injective $C^{\infty}$ path $\gamma:[0,1]\rightarrow U\setminus K$,
$\gamma'(t)\neq0$, such that
\[
\gamma(0)\in b_{0},\qquad\gamma(]0,1[)\cap\partial V_{1}=\emptyset,\qquad\gamma(1)\in b_{1}
\]
and $\gamma$ is transverse to $\partial V_{1}$ at $\gamma(0)$,
$\gamma(1)$. Clearly, $\gamma^{*}:=\gamma(]0,1[)$ is contained either
in (I) $(\mbox{int}\,V_{1})\setminus K$ or in (II) $U\setminus V_{1}$.
Thicken the embedded segment $\gamma([0,1])$ to a thin $C^{\infty}$
embedded ``tube'' $\mathbb{D}^{n-1}\times[0,1]\overset{f_{1}}{\hookrightarrow}U\setminus K$
with its bases $\mathbb{D}^{n-1}\times0$ and $\mathbb{D}^{n-1}\times1$
attached (respectively) to $b_{0}$ and $b_{1}$ so that:

\begin{enumerate}
\item the ``outer cylinder'' $\mathbb{S}^{n-2}\times[0,1]$ is smoothly
attached to $b_{0}$ and $b_{1}$;
\item as $\gamma^{*}$, $C=f_{1}(\mathbb{D}^{n-1}\times]0,1[)$ is disjoint
from $\partial V_{1}$.
\end{enumerate}
Now, as $\gamma^{*}$, $C$ is contained either in (I) or in (II).
In the first case let
\[
V_{2}=V_{1}\setminus f_{1}(\mathbb{B}^{n-1}\times[0,1])\qquad\mbox{("worm-hole drilling")}
\]
and in the second
\[
V_{2}=V_{1}\cup f_{1}(\mathbb{D}^{n-1}\times[0,1])\qquad\mbox{("solid handle attaching")}
\]
Since $V_{2}$ is obtained from $V_{1}$ modifying inside $U\setminus K$
only, it is immediate that $V_{2}$ is also an $n$-submanifold with
$C^{\infty}$ boundary still satisfying $K\subset\mbox{int}\,V_{2}$
and $V_{2}\subset U$, but $\partial V_{2}$ has one component less
than $\partial V_{1}$. If $\partial V_{2}$ is still disconnected,
then use a finite induction argument: we do with $V_{2}$ exactly
what was done with $V_{1},$ decreasing again the number of boundary
components by 1. After $k-1$ steps ($k=\,$number of components of
$\partial V_{1}$) we get a manifold $V=V_{k}$ as desired. 
\end{proof}

Centro de Matemática da Universidade do Porto \\
Rua do Campo Alegre, 687, 4169-007 Porto, Portugal{\small{}}\\
E-mail: \texttt{pteixeira.ir@gmail.com}\\

\end{document}